\documentclass[11pt]{amsart}

\usepackage{amssymb}
\usepackage{enumitem}
\usepackage[dvipsnames]{xcolor}
\usepackage[normalem]{ulem}

\usepackage{color}

\usepackage[a4paper,top=3 cm,bottom=3 cm,left=2.5 cm,right=2.5 cm]{geometry}

\newcommand{\cQ}{\mathcal{Q}}
\newcommand{\cP}{\mathcal{P}}

\newtheorem{thm}{Theorem}[section]

 \newtheorem{lem}{Lemma}[section]
 \newtheorem{prop}{Proposition}[section]
 \newtheorem{defn}{Definition}[section]
\newtheorem{rem}{Remark}[section]

\numberwithin{equation}{section}

\def\to{\rightarrow}
\newcommand{\Id}{\mathbb{I}}

\DeclareMathOperator{\dive}{div}

\newcommand{\M}{\mathcal{M}}

\renewcommand{\div}{\mathrm{div} \hspace{0.5mm}}     

\newcommand{\nchi}{{\raise.3ex\hbox{$\chi$}}}

\def\XXint#1#2#3{{\setbox0=\hbox{$#1{#2#3}{\int}$ }
		\vcen{\hbox{$#2#3$ }}\kern-.6\wd0}}

\newcommand{\N}{\mathbb{N}}

\newcommand{\R}{\mathbb{R}}

\newcommand{\Z}{\mathbb{Z}}

\makeatletter
\newcommand{\justified}{%
	\rightskip\z@skip%
	\leftskip\z@skip}
\makeatother

\newcommand\restr[2]{{
		\left.\kern-\nulldelimiterspace 
		#1 
		\vphantom{\big|} 
		\right|_{#2} 
}}

\renewcommand{\d}{{\rm d}}

\newcommand{\var}{\varepsilon}

\newcommand{\la}{\lambda}

\DeclareFontFamily{U}{mathx}{\hyphenchar\font45}
\DeclareFontShape{U}{mathx}{m}{n}{<-> mathx10}{}
\DeclareSymbolFont{mathx}{U}{mathx}{m}{n}
\DeclareMathAccent{\widebar}{0}{mathx}{"73}

\newcommand{\lb}{\langle}

\def\d{\partial}

\renewcommand{\i}{\ifmmode\mathit{\mathchar"7010 }\else\char"10 \fi}
\renewcommand{\j}{\ifmmode\mathit{\mathchar"7011 }\else\char"11 \fi}

\renewcommand{\le}{\leq} 
\renewcommand{\ge}{\geq}

\def\char{{1\!\mbox{\rm l}}}

\allowdisplaybreaks

\usepackage[utf8]{inputenc}  
\usepackage[T1]{fontenc}
\usepackage{lmodern}         
\input{glyphtounicode}

\usepackage[colorlinks,
linkcolor=red,
citecolor=blue
]{hyperref}
\usepackage{color} 

\def\Id{{\rm Id}\,}
\def\muu{{\bar\mu}}
\def\d{\partial}
\def\ddj{\dot \Delta_j}

\def\tilde{\widetilde}
\def\hat{\widehat}

\def \zr1{$z_{R,1}$}
\def \zr2{z_{R,2}}
\def \zi1{z_{I,1}}
\def \zi2{z_{I,2}}

\def\cP{{\mathcal P}}
\def\cQ{{\mathcal Q}}

\def\la{{\lambda_+}}
\def\lb{{\lambda_-}}

\begin{document}

\title[Electrostatic effects on viscous compressible fluids]{Electrostatic effects on critical regularity and long-time behavior of viscous compressible fluids}

\author{Ling-Yun Shou and Zihao Song}

\date{}

\keywords{Navier--Stokes--Poisson system; Klein--Gordon equation;
global well-posedness; critical $L^p$ framework; large-time behavior}

\subjclass[2020]{35Q35, 76N10, 35B40, 35B65}


\begin{abstract}
We consider the compressible Navier–Stokes–Poisson equations in $\mathbb{R}^d$ ($d\geq2$), a classical model for barotropic compressible flows coupled with a
self-consistent electrostatic potential. We show that the electrostatic coupling has a significant impact on the long-time dynamics of solutions due to its underlying Klein–Gordon structure. As a first result, we prove the global well-posedness of the Cauchy problem with initial data near equilibrium in the full-frequency $L^{p}$-type critical Besov space \emph{without relying on hyperbolic symmetrization}. Compared with the Poisson-free case  studied in several milestone works [\text{Charve and Danchin}, Arch. Rational Mech. Anal., 198 (2010), pp.~233--271;\
\text{Chen, Miao and Zhang}, Commun. Pure Appl. Math., 63 (2010), pp.~1173--1224;\
\text{Haspot}, Arch. Rational Mech. Anal., 202 (2011), pp.~427--460], we  remove the extra $L^{2}$ assumption in low frequencies and extend the
admissible choice of $p$ to the {\emph{sharp}} range $1\leq p<2d$. This is, to the best of our knowledge, the first result in compressible fluids that allows the initial velocity field to be highly oscillatory across all frequencies.

Furthermore, stemming from the Poisson coupling, the density and velocity exhibit distinct low-frequency behaviors. Motivated by this feature, we propose a general $L^{p}$-type low-frequency assumption and 
establish the optimal convergence rates of global solutions toward equilibrium. For a broad class of indices, this assumption yields faster decay than those
obtained under the classical $L^{1}$ framework. To this end, we develop a time-weighted energy method, which is of interest and enables us to capture maximal decay estimates without additional smallness of initial data.


\end{abstract}

\maketitle

\section{Introduction}\setcounter{equation}{0}
Electrostatic interactions constitute an essential component of compressible
fluid models across a broad range of physical settings, including plasma physics,
semiconductor modeling, and charged particle flows \cite{3,21}. When such interactions are generated by the density distribution, they induce a self-consistent feedback mechanism between the fluid motion and the electric field, resulting in
nonlocal effects that significantly influence both
well-posedness and qualitative behaviors of solutions.

In this paper, we investigate the following hydrodynamic system with electrostatic interactions in $d\geq2$:
\begin{equation}
\left\{
\begin{aligned}
&\partial_{t}\rho+\mathrm{div}(\rho u)=0,\\ 
&\partial_{t}(\rho u)+\mathrm{div}(\rho u \otimes u)+\nabla P(\rho)
=\mathcal{A}(u)+\kappa \rho\nabla \phi,\\
&\Delta \phi=\rho-\rho^*,
\end{aligned}
\right.
\label{1.1}
\end{equation}
which, referred to as the compressible Navier-Stokes-Poisson (CNSP) system,
consists of the compressible Navier-Stokes equations coupled with a Poisson equation for Coulomb-type interactions. Here,
$\rho=\rho(t,x)\in \mathbb{R}_{+}$ and $u=u(t,x)\in \mathbb{R}^{d}$ denote
the fluid density and velocity field on $[0,+\infty)\times \mathbb{R}^{d}$,
respectively, and $\phi=\phi(t,x)$ is the associated self-consistent electrostatic
potential. We assume that the fluid is barotropic, so that the pressure depends only on the
density, namely $P=P(\rho)$. The viscous operator $\mathcal{A}$ is defined by
\[
\mathcal{A}(u)=\mathrm{div}\bigl(2\mu_1(\rho)D(u)\bigr)
+\nabla\bigl(\mu_2(\rho)\mathrm{div}u\bigr),
\]
where $\mu_1(\rho)$ and $\mu_2(\rho)$ denote the density-dependent shear and bulk
viscosity coefficients, respectively,  $D(u)=\tfrac12(\nabla u+\nabla^T u)$ stands for the deformation tensor, and $\mathrm{div}$ is
the divergence operator with respect to the space variable. To ensure the uniform ellipticity of the viscous operator $\mathcal A$ and the hyperbolicity of the system linearized around the equilibrium density $\rho^*$,
we assume that
\[
\mu_1(\rho^*)>0,
\quad
\mu_1(\rho^*)+2\mu_2(\rho^*)>0\quad \text{and}\quad P'(\rho^*)>0\quad \text{for}\quad \rho^*>0.
\]
We consider the Cauchy problem for system \eqref{1.1} with the initial condition
\begin{equation}
(\rho,u)|_{t=0}=(\rho_0(x),u_0(x)), \qquad x\in\mathbb{R}^d,
\label{1.2}
\end{equation}
and assume that the initial data are perturbations of the constant equilibrium
state, i.e. in the sense that
$(\rho_0(x),u_0(x))\to(\rho^*,0)\quad \text{as } |x|\rightarrow\infty$
where $\rho^*>0$ is a given constant.

The parameter $\kappa$ characterizes the nature of interactions. 
The case $\kappa>0$ corresponds to repulsion and arises in models of charged particles under a self-consistent electrostatic field~\cite{21}, whereas $\kappa<0$ corresponds to attraction and is related to self-gravitating gaseous stars~\cite{3}. 
Linearizing around $(\rho,u)=(\rho^*,0)$ and working in Fourier variables, one finds that the equilibrium is linearly stable for $\kappa>0$ but unstable for $\kappa<0$. 
Consequently, we restrict ourselves to the repulsive regime $\kappa>0$.

\subsection{Compressible Navier-Stokes equations}

The evolution of compressible
viscous flows obeys the following barotropic compressible Navier-Stokes (CNS) system
\begin{equation}
\left\{
\begin{aligned}
&\partial_{t}\rho+\div(\rho u)=0,\\
&\partial_{t}(\rho u)+\div(\rho u \otimes u)+\nabla P(\rho)=\mathcal{A}(u).
\end{aligned}
\right.
\label{NS}
\end{equation}
Without the Poisson coupling, that is, when $\kappa=0$, the system \eqref{1.1} reduces to  \eqref{NS}. 

The CNS system \eqref{NS} has been studied extensively, and many significant results have been obtained. The local existence and uniqueness of smooth solutions were established by Serrin~\cite{serrin1} and Nash~\cite{nash1}. For weak solutions with finite-energy initial data, a major breakthrough is due to Lions~\cite{lions1}. Further developments were later achieved by Feireisl, Novotn\'y and Petzeltov\'a~\cite{feireisl1}, as well as by Jiang and Zhang~\cite{jiang1}, and the subject has remained very active since then. However, the uniqueness of solutions for general large data remains largely open. For small initial perturbations of a constant equilibrium in $H^3(\mathbb{R}^3)$, Matsumura and Nishida~\cite{mats1} established the first global well-posedness result for the compressible and heat-conductive Navier--Stokes equations. Furthermore, assuming in addition that the initial data belong to $L^1(\mathbb{R}^3)$, they~\cite{mats2} obtained heat-like decay estimates in $L^2(\mathbb{R}^3)$. Later, Ponce~\cite{ponce1} derived decay estimates in general Sobolev norms.  Hoff and Zumbrun \cite{hoff2} investigated the $L^p(\mathbb{R}^3)$ decay estimates for the isentropic Navier–Stokes system and proved the asymptotic stability of diffusion waves. Later, Liu and Wang \cite{liuwang} derived the pointwise estimates of solutions in odd dimensions and showed the generalized Huygens principle. For weaker initial data close to no-vacuum equilibrium states with discontinuous density, Hoff~\cite{hoff1,hoff97} obtained global intermediate solutions between strong and weak solutions. The presence of vacuum is one of the main difficulties in well-posedness theory. Indeed, Xin~\cite{xin0} showed that any smooth solution may blow up in inhomogeneous Sobolev spaces in finite time if the initial density contains vacuum. On the other hand, Huang, Li and Xin~\cite{HLX} constructed global classical solutions in inhomogeneous Sobolev spaces allowing large oscillations and vacuum states.


It is therefore natural to seek a functional framework with the lowest possible regularity in which the well-posedness theory still holds, that is, a space that is as close as possible to the basic energy level from a mathematical viewpoint. 
Based on the scaling invariant (originating from Fujita and
Kato \cite{fujita1} for the incompressible case), a classical choice of critical space for the CNS system \eqref{NS} is $\dot{B}^{d/p}_{p,1}\times \dot{B}^{d/p-1}_{p,1}$. 
Danchin~\cite{danchin1} first established global well-posedness for small initial data in the $L^2$-type critical homogeneous Besov space. 
We also refer to~\cite{danchin3} for local well-posedness with general large initial data in $L^p$-based critical regularity for $1<p<2d$. 
The global existence result was later extended by Charve and Danchin~\cite{charve1}, and independently by Chen, Miao and Zhang~\cite{chen1}, to general $L^2$-$L^p$-based Besov spaces.
Inspired by Hoff’s viscous effective flux~\cite{hoff1}, Haspot~\cite{haspot1} developed an  energy method in high frequencies and obtained essentially the same class of results. To overcome the hyperbolic structure (acoustic wave) in low frequencies, to the best of our knowledge, all global existence results restricted the index $p$ satisfying 
\begin{align}\label{pold}
p\in\bigl[2,\min\{4,\tfrac{2d}{d-2}\}\bigr]\quad\text{and}\quad p\neq4 \quad \text{for}\quad  d=2,
\end{align}
and an additional $L^2$ low-frequency assumption was imposed:
\begin{align}\label{addition}
(\rho_0-\rho^*, u_0)^\ell\in\dot{B}^{\frac{d}{2}-1}_{2,1}. 
\end{align}

The large-time asymptotic behavior of global solutions in critical spaces has received considerable attention. 
Okita~\cite{okita1} established decay estimates in the critical $L^2$ framework under an additional low-frequency assumption in $\dot{B}^{0}_{1,\infty}$ for $d\geq3$. Danchin proposed an alternative description of time decay that applies to all spatial dimensions $d\ge2$.
Subsequently, Danchin and Xu~\cite{danchin5} obtained decay rates in $L^2$-$L^p$-type critical spaces under the additional assumption that the low-frequency $\dot{B}^{\sigma_1}_{2,\infty}$-norm ( $d/2-2d/p\leq \sigma_1<d/2-1$) of $(\rho_0-\rho^*, u_0)$ is sufficiently small. Here $d/2-2d/p$ corresponds to the critical embedding
$L^{p/2}\hookrightarrow\dot{B}^{\sigma_0}_{2,\infty}$ for $2\le p\le\min\{4,2d/(d-2)\}$.
These results rely essentially on time-weighted energy methods in the Fourier semigroup framework, where the smallness of the low-frequency component of the initial data typically plays a crucial role.
Later, the smallness requirement in $\dot{B}^{\sigma_1}_{2,\infty}$ was removed by Xin and Xu~\cite{xin1}  using a Lyapunov-type energy method in the spirit of Guo and Wang~\cite{guo1}. Inspired by the decay characterization and Wiegner's theorem for viscous incompressible fluids, Brandolese, the first author, Xu and Zhang~\cite{BSXZ} further refined the Fourier semigroup approach to derive decay estimates in critical spaces and proved that the low-frequency $\dot{B}^{\sigma_1}_{2,\infty}$-regularity of the initial perturbation is both sufficient and necessary for upper bounds of decay rates.
They also characterized both lower and upper bounds of decay rates in critical spaces in terms of the subset $\mathcal{B}^{\sigma}_{2,\infty}$ of the low-frequency part of the initial perturbation.

\subsection{Compressible Navier-Stokes-Poisson equations}

Compared with the CNS equations \eqref{NS}, an additional lower-order Poisson coupling in the CNSP system \eqref{1.1} leads to different phenomena and extra mathematical difficulties. The theory of weak solutions was investigated by Donatelli~\cite{donatelli1},  Zhang and Tan~\cite{zhang1} and Duan and Li \cite{duanli1}. Concerning strong solutions  near a constant equilibrium, Li, Matsumura and Zhang~\cite{LiNSP0} established global well-posedness in three-dimensional Sobolev spaces by adapting the classical Matsumura and Nishida method~\cite{mats1,mats2}. Under the additional condition that both $\rho_0-\rho^*$ and $u_0$ are small in $L^1(\mathbb{R}^3)$, they \cite{LiNSP0} also obtained optimal rates of global solutions:
\begin{align*}
\|(\rho-\rho^*)(t)\|_{L^2}\lesssim (1+t)^{-\frac{3}{4}},\quad \|u(t)\|_{L^2}\leq C(1+t)^{-\frac{1}{4}},
\end{align*}
where the $L^2$-rate of the velocity is slower than that of the classical CNS system due to the influence of the electrostatic force. Wang and Wu \cite{wang1} investigated the pointwise estimates of solutions and showed that the pointwise profile of the solution contains the Diffusion wave but does not contain the Huygens wave, which is different from the case of the CNS system. Li and Zhang \cite{LiNSP1} established decay estimates of classical solutions in the case where the initial perturbation is additionally small in $\dot{B}^{-s}_{1,\infty}$ with some $s\geq0$.   Using the method developed by Guo and Wang \cite{guo1}, Wang \cite{wang2} enhanced the decay rate by considering the coupling relationship of $\rho-\rho^*$, $m$ and under the boundedness of the negative Sobolev norm $\dot H^s$ ($s\in[0,3/2)$). We also mention the stability of solutions around certain nontrivial steady states studied in a series of works by Luo, Xin and Zeng~\cite{luoxinzeng1,luoxinzeng2}.

On the other hand, it is interesting to consider the global dynamics of  solutions in the critical regularity setting. More precisely, the solutions are sought in a functional space that has the same invariance under time and space dilations as the CNSP system \eqref{1.1} itself (neglecting the lower-order pressure and electrostatic field terms), namely 
$$
(\psi,\rho,u)\longrightarrow (\psi_l,\rho_l,u_l)\quad\text{ for all }\,\,\,  l>0
$$
with
\begin{equation*}
\psi_l(t,x)=l^{-2}\psi(l^{2}t,l x),\quad
\rho_l(t,x)=\rho(l^{2}t,l x),\quad
u_l(t,x)=l\,u(l^{2}t,l x).
\end{equation*}
 In addition, the Stokes maximal-regularity estimate for the second equation in \eqref{1.1} forces the Poisson term to lie in the same space as the velocity. Consequently, a suitable choice of data may be $(\dot{B}^{d/p-2}_{p,1}\cap \dot{B}^{d/p}_{p,1})\cap \dot{B}^{d/p-1}_{p,1} $. Hao and Li \cite{hao} established the 
global existence of small strong solutions to \eqref{1.1}-\eqref{1.2} in the $L^2$ hybrid Besov space and in dimensions $d\geq3$. To extend the  existence theory to $d\geq2$, Chikami and Ogawa \cite{CO} established local well-posedness when the initial data belong to the hybrid Besov space $\dot{B}^{d/p-2+\nu,d/p}_{p,1}\times\dot{B}^{d/p-1+\nu,d/p-1}_{p,1}$ with $1<p<2d/(2-\nu)$, $0<\nu\leq1$ if $n=2$ and $\nu=0$ if $\nu\geq3$. 
Subsequently, Chikami and Danchin \cite{CD} proved the unique global strong 
solution within the $L^p$ Besov space $(\dot{B}^{d-2}_{p,1}\cap\dot{B}^{d-2}_{p,1})\cap \dot{B}^{d-1}_{p,1} $ for $d\geq2$ under the same restriction \eqref{pold} and the additional low-frequency $L^2$ assumption
\begin{align}\label{L2NSP}
\|\rho_0-\rho^*\|_{\dot{B}^{\frac{d}{2}-2}_{2,1}}^{\ell}+\| u_0\|_{\dot{B}^{\frac{d}{2}-1}_{2,1}}^{\ell}\ll 1.
\end{align}
Moreover, they \cite{CD} prove optimal time-decay estimates of solutions and higher-order derivatives in the $L^2$ setting:
\begin{align*}
\|(\rho-\rho^*)(t)\|_{\dot{B}^{\sigma-1}_{2,1}}+\|u(t)\|_{\dot{B}^{\sigma}_{2,1}}\leq (1+t)^{-\frac{\sigma}{2}-\frac{d}{4}},\quad -\frac{d}{2}<\sigma\leq \frac{d}{2}+1
\end{align*}
if additionally the low frequencies of $\rho_0-\rho^*$ and $u_0$ are small in $\dot{B}^{-d/2-1}_{2,\infty}\times \dot{B}^{-d/2}_{2,\infty}$. The decay result of \cite{CD} has been extended to the $L^2$-$L^p$ setting.  By additionally considering the $\dot{B}^{\sigma_1}_{2,\infty}$-type smallness assumption $\|\rho_0-\rho^*\|_{\dot{B}^{\sigma_1-1}_{2,\infty}}^{\ell}+\|u_0\|_{\dot{B}^{\sigma_1}_{2,\infty}}^{\ell}\ll 1$  with $d/2-2d/p\leq \sigma_1<d/2-1$,  Shi and Xu \cite{shi} established the large time behavior
\begin{align*}
\|(\rho-\rho^*)(t)\|_{\dot{B}^{\sigma-1}_{2,1}}^{\ell}+\|u(t)\|_{\dot{B}^{\sigma}_{2,1}}^{\ell}+\|(\rho-\rho^*,\nabla u)(t)\|_{\dot{B}^{\frac{d}{p}}_{p,1}}^{h}\lesssim (1+t)^{-\frac{1}{2}(\sigma-\sigma_1)},\quad\sigma_1<\sigma\leq \frac{d}{2}+1.
\end{align*}

\vspace{2mm}

It is well known that the $L^{2}$-based low-frequency assumptions (e.g., \eqref{addition} and \eqref{L2NSP}) play a crucial role in the global theory for compressible fluids, as pointed out in, for instance, in \cite{CD,charve1,haspot1}, owing to the symmetric hyperbolic structure. 
By contrast, for local-in-time solutions,  both low and high frequencies may be treated in $L^p$-type critical spaces with the same index $p\in[1,2d)$. Therefore, a natural open question is whether this gap can be bridged.

The purpose of this work is to provide a positive answer to this question for compressible viscous fluids with electrostatic interactions. 
We prove that the electrostatic coupling indeed produces a low-frequency regularizing effect. As a consequence, we establish global well-posedness for the CNSP system \eqref{1.1} for initial data near equilibrium in the critical space $(\dot{B}^{d/p-2}_{p,1}\cap \dot{B}^{d/p}_{p,1})\cap \dot{B}^{d/p-1}_{p,1}$  with  the {\emph{optimal}} range $p\in [1,2d)$, which is \emph{independent of hyperbolic symmetrization}. Furthermore, we introduce a new $L^p$-type low-frequency boundedness condition and quantitatively investigate the long-term behaviors with different decay properties for the density and the velocity. 

\subsection{Main results}

We state our first main result on the global existence and uniqueness of solutions to the Cauchy problem \eqref{1.1}-\eqref{1.2}.

\begin{thm}\label{thm2.1}
Let $1\leq p<2d$. If there exists a positive $\varepsilon_{0}>0$ such that the initial datum $(\rho_{0},u_0)$ fulfills $ \rho_{0}-\rho^*\in \dot{B}^{\frac{d}{p}-2}_{p,1}\cap \dot{B}^{\frac{d}{p}}_{p,1}$, $u_{0}\in\dot{B}^{\frac{d}{p}-1}_{p,1}$ and
\begin{eqnarray}
\|\rho_{0}-\rho^*\|_{\dot{B}^{\frac{d}{p}-2}_{p,1}\cap \dot{B}^{\frac{d}{p}}_{p,1}}+
\|u_{0}\|_{\dot{B}^{\frac{d}{p}-1}_{p,1}}\leq \varepsilon_{0},\label{a1}
\end{eqnarray}
then the Cauchy problem \eqref{1.1}-\eqref{1.2} admits a unique global-in-time solution $(\rho,u)$ satisfying 
\begin{align}\label{1.8}
&\rho-\rho^*\in\mathcal{C}(\mathbb{R}_+;\dot{B}^{\frac{d}{p}-2}_{p,1}\cap \dot{B}^{\frac{d}{p}}_{p,1})\cap L^1(\mathbb{R}_+;\dot{B}^{\frac{d}{p}}_{p,1}), \quad u\in\mathcal{C}(\mathbb{R}_+;\dot{B}^{\frac{d}{p}-1}_{p,1})\cap L^1(\mathbb{R}_+;\dot{B}^{\frac{d}{p}+1}_{p,1}),
\end{align}
and
\begin{equation}\label{bound}
\begin{aligned}
\|\rho-\rho^*\|_{\widetilde{L}^\infty(\mathbb{R}_+;\dot{B}^{\frac{d}{p}-2}_{p,1}\cap\dot{B}^{\frac{d}{p}}_{p,1})\cap L^1(\mathbb{R}_+;\dot{B}^{\frac{d}{p}}_{p,1})}&+\|u\|_{\widetilde{L}^\infty(\mathbb{R}_+;\dot{B}^{\frac{d}{p}-1}_{p,1})\cap L^1(\mathbb{R}_+;\dot{B}^{\frac{d}{p}+1}_{p,1})}\\
&\leq C\big(\|\rho_{0}-\rho^*\|_{\dot{B}^{\frac{d}{p}-2}_{p,1}\cap\dot{B}^{\frac{d}{p}}_{p,1}}+
C\|u_{0}\|_{\dot{B}^{\frac{d}{p}-1}_{p,1}}\big),
\end{aligned}
\end{equation}
with $C>0$ a generic constant.
\end{thm}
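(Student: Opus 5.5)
Set $a=\rho-\rho^*$ (normalized so that $\rho^*=1$, $P'(1)=1$, $\kappa=1$ after rescaling) and rewrite \eqref{1.1} as
\begin{align*}
\partial_t a+\div u&=-\div(au),\\
\partial_t u-\mathcal{A}_0 u+\nabla a-\nabla\Delta^{-1}a&=F(a,u),
\end{align*}
where $\mathcal{A}_0=\mu\Delta+(\mu+\mu')\nabla\div$ and $F$ gathers the quadratic terms: the convection, the density-dependent viscosity, the pressure and $a\nabla\phi$. The plan is: establish an a priori estimate in the space $X$ defined by the left-hand side of \eqref{bound}, close it by a continuity argument for small $\varepsilon_0$, combine with the local theory of \cite{CO} (or a Friedrichs scheme), and prove uniqueness by a standard stability estimate in a space with one less derivative.

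\textbf{Low frequencies.} The key observation is that for $|\xi|\le R_0$ the linear symbol for $(\Lambda^{-1}a,\div u)$ is a damped Klein--Gordon system. Its eigenvalues satisfy $\lambda_\pm=-\tfrac{\nu}{2}|\xi|^2\pm i\sqrt{1+|\xi|^2}+O(|\xi|^4)$, so there is no pure hyperbolic transport with speed $|\xi|$: the oscillation is at frequency about $1$. I will not symmetrize in $L^2$. Instead I will work in $L^p$ directly via an explicit Fourier-multiplier representation of the semigroup, or equivalently a Lyapunov functional built from $\Delta^{-1}$-weighted quantities. The compressible part uses the effective unknowns $\Delta^{-1}a$ and $\div u$. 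The incompressible part $\cP u$ is simply heat-like. The goal is a dyadic estimate
$$\|\dot\Delta_j(\Lambda^{-1}a,u)(t)\|_{L^p}\lesssim e^{-c2^{2j}t}\|\dot\Delta_j(\Lambda^{-1}a_0,u_0)\|_{L^p}+\int_0^t e^{-c2^{2j}(t-s)}\|\dot\Delta_j(\text{nonlinear})\|_{L^p}\,ds,$$
valid for every $1\le p\le\infty$. To get it I will show that the multipliers $e^{\lambda_\pm t}$, restricted to a dyadic low-frequency annulus, have $L^1$ kernels bounded by $e^{-c2^{2j}t}$. Since the oscillating factor $e^{\pm it\sqrt{1+|\xi|^2}}$ is smooth in $\xi$ with derivatives growing only polynomially in $t\,2^{2j}$, this should follow from Bernstein/Mikhlin bounds after the rescaling $\xi=2^j\eta$. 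The polynomial growth is then absorbed by the exponential decay. I expect this step to be the main obstacle: one must check that the phase derivatives $t\,\partial_\eta\sqrt{1+2^{2j}|\eta|^2}\sim t\,2^{2j}$ are controlled by the damping, uniformly in $j$ and for $p=1$. The estimate yields $a^\ell\in \widetilde L^\infty(\dot B^{d/p-2}_{p,1})\cap L^1(\dot B^{d/p}_{p,1})$ and $u^\ell\in\widetilde L^\infty(\dot B^{d/p-1}_{p,1})\cap L^1(\dot B^{d/p+1}_{p,1})$, with no $L^2$ assumption.

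\textbf{High frequencies.} Here I follow Haspot's effective velocity $w=\nabla(-\mathcal{A}_0)^{-1}$-type combination, $w=\cQ u+(2\mu+\mu')^{-1}\nabla\Delta^{-1}a$. This turns the $a$-equation into a damped transport equation $\partial_t a+u\cdot\nabla a+a=\ldots$. The Poisson term $\nabla\Delta^{-1}a$ is of lower order and harmless at high frequency. The $L^p$ transport estimate with commutators for $u\cdot\nabla a$, together with maximal regularity for $w$ and $\cP u$, gives the $\dot B^{d/p}$ and $\dot B^{d/p-1}$ bounds. These hold for all $1\le p<\infty$, with commutator estimates valid for $d/p>-d/p'$ after the usual adjustments.

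\textbf{Nonlinear terms.} I will bound the nonlinear terms by $\|(a,u)\|_X^2$ using product laws in $\dot B^{s}_{p,1}$. The restriction $p<2d$ enters here: it ensures $d/p-1>-d/p'$, i.e.\ $\dot B^{d/p-1}_{p,1}\times\dot B^{d/p}_{p,1}\to\dot B^{d/p-1}_{p,1}$. The term $a\nabla\phi$ with $\nabla\phi\in\dot B^{d/p-1}_{p,1}$ is handled the same way. The composition estimates for $P(\rho)$ and $\mu_i(\rho)$ are standard because $a$ is small in $L^\infty\subset\dot B^{d/p}_{p,1}$. Low-frequency sources of the form $\div(au)$ need the $\dot B^{d/p-2}$ norm of the product, which follows from $\dot B^{d/p-1}\times\dot B^{d/p}$ products. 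This gives $X(t)\le C X(0)+CX(t)^2$, which closes the argument for small $\varepsilon_0$.
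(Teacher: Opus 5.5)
Your proposal is correct and follows essentially the paper's route. The ingredients match one for one: the low-frequency $L^1$-kernel bound $e^{-c2^{2j}t}$ for the localized Green matrix (Lemma~\ref{G} and Proposition~\ref{heat Lp}, where the Klein--Gordon phase derivative of size $|\xi|t$ is absorbed by the heat factor just as in your rescaling argument), Haspot's effective velocity with a damped $L^p$ transport estimate at high frequencies, the product law $\dot B^{d/p}_{p,1}\times\dot B^{d/p-1}_{p,1}\to\dot B^{d/p-1}_{p,1}$ for $p<2d$, and a continuity argument on top of the local theory.

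Two small fixes are needed. First, damping of the $a$-equation requires $w=\cQ u-\bar\mu^{-1}\nabla\Delta^{-1}a$; your sign produces $\partial_t a-\bar\mu^{-1}a$. Second, because the high-frequency threshold $2^{j_0}$ must be taken large, the multiplier analysis has to cover the whole band $|\xi|\le 2^{j_0}$, not just small $|\xi|$. This band includes the double-root sphere $|\xi|=\xi_0$, which the paper handles in Cases~2--3 of Lemma~\ref{G} via $\Re\lambda_\pm\le -c$ and the smoothness of the symmetric combinations $\frac{e^{\lambda_+t}-e^{\lambda_-t}}{\lambda_+-\lambda_-}$.
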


\begin{rem}\normalfont
There are several comments concerning Theorem~\ref{thm2.1}.

\begin{itemize}

\item 
Theorem~\ref{thm2.1} removes the additional low-frequency $L^2$ assumptions \eqref{L2NSP} and extends the usual admissible range $
p\in\bigl[2,\min\{4,\tfrac{2d}{d-2}\}\bigr]$ ($p\neq4$ for $d=2$)  required in the seminal results  ~\cite{charve1,chen1,haspot1} for the CNS system and \cite{CD} for the CNSP system, which allows constructing global solutions with a larger class of initial data.
The key ingredient of our analysis is to capture the coupled diffusive–Klein–Gordon structure at low frequencies, which yields an {\emph{enhanced regularization effect}} compared with that of the acoustic wave (see \ref{1.23}).

\item 
When $p>d$, global existence and uniqueness hold for the velocity in critical spaces with negative regularity.
In particular, the velocity may be highly oscillatory across {\emph{all frequencies}}. A classical example is $u_0(x)=\sin (\frac{x_1}{\var})\phi(x)$ with any $\phi(x)\in \mathcal{S}(\mathbb{R}^d)$, which satisfies $$
\|u_0\|_{\dot{B}^{\frac{d}{p}-1}_{p,1}}\lesssim \var^{1-\frac{d}{p}}\ll1
$$
when $\var$ is sufficiently small. This is an analogy to Cannone's theorem \cite{cannone1} for the incompressible Navier-Stokes equations.

\item 
The restriction $1\leq p<2d$ appears to be {\emph{sharp}}.
Indeed, strong ill-posedness phenomena occur when $p\geq 2d$; see \cite{chen2,IQ1}.
Such a full range of $p$, including the endpoint Lebesgue exponent, is consistent with the local well-posedness theory (see~\cite{CL} and Theorem~\ref{thm3.1}).
The critical case $p=1$ is excluded in Theorem~\ref{thm2.1} thanks to a priori estimates remaining valid under small initial perturbations.

\end{itemize}
\end{rem}

Then, we establish the large-time behavior of solutions under a new $L^p$ assumption. 

\begin{thm}\label{Theorem2.2}
Let $(\rho,u)$ be the global solution to the Cauchy problem \eqref{1.1}-\eqref{1.2} constructed in Theorem \ref{thm2.1}. Define
the momentum $m=\rho u$ and its initial datum $m_0=\rho_0 u_0$. Let the real number $\sigma_{1}$ satisfy \vspace{-2mm} 
\begin{align}
   \sigma_0-1\leq \sigma_{1}<\frac{d}{p}-1\quad\text{with} \quad \sigma_0 =
\begin{cases}
-\dfrac{d}{p}, & 2\leq p< 2d,\\[6pt]
-\dfrac{d}{p'}, & 1\le p<2.
\end{cases}\label{sigma1}
\end{align}
Here $p'$ is given by $\frac{1}{p}+\frac{1}{p'}=1$. If the initial datum $(\rho_0,u_0)$ satisfies
\begin{align}
    (\Lambda^{-1}(\rho_0-\rho^*),\rho_{0} u_0)^{\ell} \in \dot{B}^{\sigma_{1}}_{p,\infty}, \label{aa1}
\end{align}
then for all $t\geq 1$, the solution $(\rho,u)$ admits
\begin{align}
\|(\rho-\rho^*)(t)\|_{\dot{B}^{\sigma}_{p,1}}&\leq C(1+ t)^{-\frac{1}{2}(\sigma-\sigma_{1}+1)},\,\,\,\,\,\,\quad\quad\quad \quad\sigma_1-1<\sigma\le \dfrac{d}{p},\label{decay1}\\
\|\rho u(t)\|_{\dot{B}^{\sigma}_{p,1}}&\leq C(1+ t)^{-\frac{1}{2}(\sigma-\sigma_{1})},\quad\quad\,\,\,\,\quad\quad\quad\quad\quad \sigma_1<\sigma\le \dfrac{d}{p},\label{decay4}\\
\|u(t)\|_{\dot{B}^{\sigma}_{p,1}}&\leq C(1+ t)^{-\frac{1}{2}(\sigma-\sigma_{1})},\quad\quad\quad \,\,\,\, \min\!\left\{\sigma_0,\sigma_1\right\}<\sigma\le \dfrac{d}{p}+1,\label{decay3}
\end{align}
with $C>0$ a uniform constant independent of $t$.
\end{thm}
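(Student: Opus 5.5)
We first reformulate the problem in the perturbative unknowns $a=\rho-\rho^*$ and $m=\rho u$ at low frequencies, because the hypothesis \eqref{aa1} is stated on $\Lambda^{-1}a$ and $m$. The mass equation reads $\partial_t a+\div m=0$. The momentum equation, after using $\Delta\phi=a$, reads
\[
\partial_t m-\mathcal{A}^*(m/\rho^*)+P'(\rho^*)\nabla a-\kappa\rho^*\nabla\Delta^{-1}a=N(a,u),
\]
where $N$ collects quadratic terms in divergence or gradient form. Projecting onto the compressible part $\mathcal{Q}m$ and setting $v=\Lambda^{-1}\div m$, the linear low-frequency system for $(\Lambda^{-1}a,v)$ has the symbol
\[
\begin{pmatrix}0&-|\xi|\\ \kappa\rho^*|\xi|^{-1}\cdot|\xi|+P'|\xi|&-\nu|\xi|^2\end{pmatrix}.
\]
In words, it is a damped Klein--Gordon system: for $|\xi|\ll1$ both eigenvalues have real part $\simeq-\nu|\xi|^2/2$, with an oscillation frequency of order $\sqrt{\kappa\rho^*}$. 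The incompressible part $\mathcal{P}m$ is purely heat-like.

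Step 1 is the linear low-frequency estimate. We will show that each dyadic block $\dot\Delta_j$ ($j\le J_0$) of $(\Lambda^{-1}a,m)$ decays like $e^{-c2^{2j}t}$ in $L^p$, for every $1\le p\le\infty$. The point is that the Klein--Gordon part contributes only a bounded oscillatory multiplier $e^{\pm i\sqrt{\kappa\rho^*}t}$ times smooth symbols in $\xi$ on the annulus. Its kernel is therefore bounded in $L^1$ uniformly in $t$, which avoids the $L^p$ loss of wave operators. This is the mechanism already exploited for Theorem~\ref{thm2.1}, and we reuse the low-frequency Lyapunov/semigroup bounds from its proof.

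Step 2 is the time-weighted energy argument. We write Duhamel's formula and derive, for $\sigma>\sigma_1$,
\[
\|(\Lambda^{-1}a,m)^\ell(t)\|_{\dot B^{\sigma}_{p,1}}\lesssim \langle t\rangle^{-\frac{\sigma-\sigma_1}{2}}\Big(\|(\Lambda^{-1}a_0,m_0)^\ell\|_{\dot B^{\sigma_1}_{p,\infty}}+\sup_{s}\langle s\rangle^{\frac{\sigma_1'}{2}}\|N(s)\|^{\ell}_{\dot B^{\sigma_1}_{p,\infty}}\Big).
\]
To propagate this, we introduce the functional
\[
\mathcal{D}(t)=\sup_{s\le t}\langle s\rangle^{\frac{d/p+1-\sigma_1}{2}}\big(\|(\Lambda^{-1}a,m)^\ell\|_{\dot B^{d/p+1}_{p,1}}+\|(a,\nabla u)\|^h_{\dot B^{d/p}_{p,1}}\big),
\]
together with a bound showing that the $\dot B^{\sigma_1}_{p,\infty}$ low-frequency norm stays bounded by a constant depending on the data. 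The high-frequency part is handled by the effective-flux energy method from the proof of Theorem~\ref{thm2.1}, weighted by $t^{\theta}$. This weighting gives exponential-type damping modulo low-frequency sources, so the high-frequency contribution decays at least as fast as the low-frequency one.

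Step 3, the main obstacle, is controlling the nonlinear terms in $\dot B^{\sigma_1}_{p,\infty}$ at low frequencies down to the endpoint $\sigma_1=\sigma_0-1$. Here we use the product law
\[
\dot B^{d/p-1}_{p,1}\times\dot B^{\sigma}_{p,1}\to\dot B^{\sigma_0+\dots}_{p,\infty},
\]
which explains $\sigma_0=-d/p$ for $p\ge2$ and $-d/p'$ for $p<2$. It must be applied in divergence form, to gain one derivative, giving $\|\div(m\otimes u)\|_{\dot B^{\sigma_0-1}}$. The bounded-data-without-smallness issue is resolved by interpolation: the nonlinear source is bounded by $\mathcal{D}(t)^{\eta}$ times the global energy norm $\le C\varepsilon_0$, so a Young-type absorption closes $\mathcal{D}(t)\le C$.

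Step 4 recovers the stated decay. The bound \eqref{decay1} for $a$ follows from the bound for $\Lambda^{-1}a$ by shifting the index by one, and \eqref{decay4} is the $m$ bound. For \eqref{decay3}, we write $u=m/\rho=m-\frac{a}{\rho}m/\rho^*$ plus a constant factor, and estimate the correction by products of decaying quantities. The high-regularity range up to $d/p+1$ uses the $\dot B^{d/p+1}$ high-frequency bound from Step 2. The lower endpoint $\min\{\sigma_0,\sigma_1\}$ reflects exactly where the product law for $am$ ceases to hold.
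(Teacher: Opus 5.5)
Your overall architecture matches the paper's: the low-frequency semigroup bound of Proposition~\ref{heat Lp}, propagation of $E:=\|(\Lambda^{-1}a,m)^\ell\|_{L^\infty_t(\dot{B}^{\sigma_1}_{p,\infty})}$ (Proposition~\ref{lemma:evolution:gamma>0}), decay of the top-order norms at the rate $\alpha:=\frac12(\frac dp+1-\sigma_1)$, then interpolation and recovery of $u$ from $m$. The gap is in Steps~2--3, which is exactly where the non-smallness of $E$ has to be handled.

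\textbf{The closure in Steps 2--3 fails.} Your Duhamel bound needs pointwise-in-time decay $\langle s\rangle^{-\alpha}$ of a low-regularity norm of the source, with a small coefficient (your ``$\mathcal{D}^\eta$ times the energy norm''). Take, for instance, the low--low part of $u\otimes u$ in $\dot{B}^{\sigma_1+1}_{p,\infty}$; no such bound is possible for it. Pointwise in time, the small quantities $\|u\|_{\dot{B}^{d/p-1}_{p,1}}$ and $\|a\|_{\dot{B}^{d/p-2}_{p,1}\cap\dot{B}^{d/p}_{p,1}}$ are bounded by $\mathcal{X}_p$ with no rate. Moreover, $\mathcal{X}_p$ controls $\|u\|_{\dot{B}^{d/p}_{p,1}}$ and $\|u\|_{\dot{B}^{d/p+1}_{p,1}}$ only in $L^2_t$ and $L^1_t$. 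Suppose you estimate the product by $\|u\|_{\dot{B}^{s_1}_{p,1}}\|u\|_{\dot{B}^{s_2}_{p,1}}$ with $s_1+s_2\le\sigma_1+1+\frac dp$. Interpolate each factor between $\dot{B}^{\sigma_1}$ (size $E$), $\dot{B}^{d/p-1}$ (size $\mathcal{X}_p$) and $\dot{B}^{d/p+1}$ (size $\mathcal{D}\langle s\rangle^{-\alpha}$), with total exponents $A+B+C=2$. The regularity budget then gives $B(\frac dp-1-\sigma_1)\le(1-C)(\frac dp+1-\sigma_1)$. The required rate $C\alpha\ge\alpha$ therefore forces $B=0$, because $\sigma_1<\frac dp-1$. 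You are left with $\mathcal{D}\lesssim C_0+E\,\mathcal{D}$, which closes only if $E$ is small. That is precisely the smallness assumption of earlier works which Theorem~\ref{Theorem2.2} dispenses with.

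\textbf{The missing idea.} You need a mechanism that never asks for pointwise decay of the source in low-regularity norms. Within your framework, split the Duhamel integral at $t/2$.
\begin{itemize}
\item On $[0,t/2]$ the low-frequency kernel gives $\langle t\rangle^{-\alpha}\|\mathcal{N}^\ell\|_{\widetilde{L}^1_t(\dot{B}^{\sigma_1+1}_{p,\infty})}$. The time-integrability of this norm is exactly what the proof of \eqref{4.55} supplies. It uses Gr\"onwall with the $L^1_t$ coefficients $\|u\|_{\dot{B}^{d/p+1}_{p,1}}$ and $\|a\|_{\dot{B}^{d/p}_{p,1}}$. For $\sigma_1<\sigma_0$ it also uses the balanced index $\sigma_1^*=\frac12(\sigma_1+1+\frac dp)$. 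Pairing with the merely bounded norm $\dot{B}^{d/p-1}_{p,1}$, as your product law suggests, would instead leave $\|u\|_{\dot{B}^{\sigma_1+2}_{p,1}}\sim t^{-1}$, which is not integrable.
\item On $[t/2,t]$ you only need $\|\mathcal{N}^\ell\|_{\widetilde{L}^\infty(t/2,t;\dot{B}^{d/p}_{p,1})}\lesssim\mathcal{X}_p\,\mathcal{D}\langle t\rangle^{-\alpha}$. This follows from bounds such as $\|u\|_{\dot{B}^{d/p}_{p,1}}^2\lesssim\|u\|_{\dot{B}^{d/p-1}_{p,1}}\|u\|_{\dot{B}^{d/p+1}_{p,1}}$. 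It uses the smallness of $\mathcal{X}_p$, which is available here.
\end{itemize}

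\textbf{The paper's route.} The paper avoids pointwise decay of nonlinear terms altogether.
\begin{itemize}
\item It multiplies the system by $t^M$ and works at low frequencies in $\widetilde{L}^\infty_t(\dot{B}^{d/p+1}_{p,1})\cap L^1_t(\dot{B}^{d/p+3}_{p,1})$.
\item It controls the new source $Mt^{M-1}(a,m)$ by interpolating between $L^\infty_t(\dot{B}^{\sigma_1}_{p,\infty})$ and the $L^1_t(\dot{B}^{d/p+3}_{p,1})$ dissipation, then applying H\"older in time; this is what produces the factor $t^{M-\alpha}$.
\item It bounds each nonlinear term by $\eta$ times the dissipation plus an $L^1_t$ coefficient times $\mathcal{D}_{p,M}$, and closes by Gr\"onwall; see \eqref{4.32}.
\end{itemize}

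\textbf{Smaller points.}
\begin{itemize}
\item Your $\mathcal{D}$ puts the weight $\langle s\rangle^{\alpha}$ on $\|\nabla u\|^h_{\dot{B}^{d/p}_{p,1}}$. This is infinite near $s=0$, since $u_0$ is only in $\dot{B}^{d/p-1}_{p,1}$ and $u$ lies in $\dot{B}^{d/p+1}_{p,1}$ only in $L^1_t$. You need a weight vanishing at $0$, such as $s^\alpha$ or the paper's $t^M$.
\item In Step~1 the phase is $tB(\xi)$ with $\nabla_\xi B=O(|\xi|)$, not $t\sqrt{\kappa\rho^*}$. So $e^{itB(D)}\dot\Delta_j$ on its own is not uniformly bounded on $L^1$ (cf.~\eqref{1.23}). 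Proposition~\ref{heat Lp} holds only because the heat factor absorbs this growth (Lemma~\ref{G}).
\item Your Step~4, like the paper's own argument, yields \eqref{decay3} for $\max\{\sigma_0,\sigma_1\}<\sigma\le\frac dp+1$. The $\min$ in the statement appears to be a misprint.
\end{itemize}
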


\begin{rem}\normalfont
Some comments concerning Theorems \ref{Theorem2.2} are in order.
\begin{itemize}



\item We highlight a qualitative impact of electrostatic interactions on compressible viscous flows. On the one hand, the eigenvalues of the linearized CNSP system display mixed  heat diffusion and Klein–Gordon–type oscillations, which allows us to introduce, for the first time, $L^{p}$-type ($p\neq 2$) decay assumption \eqref{aa1} in the compressible fluid setting. Such a generalization naturally allows us to establish uniform decay within a larger class of initial data by noticing the following embedding chain:
$$L^1 \hookrightarrow \dot B^{0}_{1,\infty}\hookrightarrow \dot B^{-\frac{d}{2}}_{2,\infty}\hookrightarrow \dot B^{-\frac{d}{p}}_{p,\infty},\quad p\geq2.$$
On the other hand, the Poisson coupling alters the eigenprojections such that the density and the velocity (or the momentum) lie at different regularities, leading to faster decay of the density, in sharp contrast with the barotropic compressible Navier-Stokes equations \cite{BSXZ,danchin5,xin1}.

\item Theorem \ref{Theorem2.2} depends on a new weighted energy method relying only on the boundedness of solutions and may be of independent interest. In particular, this allows us to remove the additional smallness condition in earlier significant works \cite{CD,LiNSP0,LiNSP1,shi}.

\item When $p=2$, our low-frequency  assumption \eqref{aa1} reduces to $(\Lambda^{-1}(\rho_0-\rho^*),\,\rho_0u_0)^{\ell}\in \dot{B}^{\sigma_1}_{2,\infty}$ ($-\frac{d}{2}-1\leq \sigma_1<\frac{d}{2}-1$) used in Chikami and Danchin \cite{CD} and Shi and Xu \cite{shi}.
For $-\frac{d}{2}\leq\sigma_1\le \frac{d}{2}-1$, the conditions of $\rho_0u_0\in \dot{B}^{\sigma_1}_{2,\infty}$ and $u_0\in \dot{B}^{\sigma_1}_{2,\infty}$ are equivalent thanks to product laws. 
Moreover, for $-\frac{d}{2}-1\leq \sigma_1<-\frac{d}{2}$, our approach yields decay rates that are even faster than those obtained under the classical $L^{1}$ assumption.

\item When $p=1$, the low-frequency assumption becomes $(\Lambda^{-1}(\rho_0-\rho^*),\,\rho_0u_0)^{\ell}\in \dot{B}^{\sigma_1}_{1,\infty}$ ($-1\le \sigma_1<d-1$), which essentially coincides with the assumption introduced by Li and Zhang \cite{LiNSP1}. 
Specifically, if $\sigma_1=-1$, we are able to obtain the asymptotic behavior in $L^1$:
\begin{align*}
\|(\rho-\rho^*)(t)\|_{L^1}&\leq C(1+t)^{-\frac{3}{2}},\quad \|u(t)\|_{L^1}\leq C(1+t)^{-\frac{1}{2}}.
\end{align*}
These imply the uniform decay of $(\rho-\rho^*,u)$ in integrable spaces rather than the classical boundedness.

\end{itemize}
\end{rem}


\subsection{Reformulation and methodology}

Without loss of generality, we shall fix the equilibrium of the density to be $\rho^{\ast}=1$. 
Denoting the density fluctuation  $a=\rho-1$, we reformulate the Cauchy problem \eqref{1.1}-\eqref{1.2} as
\begin{equation}
\left\{
\begin{array}{l}\partial_{t}a+u\cdot\nabla a+(1+a)\mathrm{div}u=0,\\ [1mm]
 \partial_{t}u-\bar{\mathcal{A}}u+\gamma\nabla a+\kappa\frac{\nabla}{-\Delta} a= g,\\[1mm]
(a,u)|_{t=0}=(a_{0},u_{0}).\\[1mm]
 \end{array} \right.\label{linearized:1}
\end{equation}
Here, $\gamma=P'(1)$, the linearized viscous operator $\bar{\mathcal{A}}$ is given by
$$\bar{\mathcal{A}}u=\bar\mu_1\Delta u+(\bar\mu_1+\bar\mu_2)\nabla\dive u\quad\text{with}\quad  \bar\mu_1=\mu_1(1)\quad \text{and}\quad \bar\mu_2=\mu_2(1).$$ One can write the nonlinear terms $g=g_1+g_2+g_3$ as follows: 
\begin{equation}
\begin{aligned}
g_1=-u\cdot\nabla u,\quad g_{2}=\big(1+Q(a)\big)\big(\mathrm{div}(2\tilde{\mu}_1(a) D(u))+\nabla(\tilde{\mu}_2(a)\mathrm{div}u)\big),\quad 
g_{3}={G}(a)\nabla a,
\end{aligned} \label{nonlinear}
\end{equation}
with
\begin{align*}
Q(a)&=\frac{1}{a+1}-1,\quad\quad\quad\,\, G(a)=(a+1)P'(a+1)-\lambda,\\
\tilde{\mu}_1(a)&=\mu_1(a+1)-\bar\mu_1,\quad\tilde{\mu}_2(a)=\mu_2(a+1)-\bar\mu_2.
\end{align*}

To better understand the proofs of Theorems \ref{thm2.1} and \ref{Theorem2.2}, we first perform a formal spectral analysis of the linearized system associated with \eqref{1.1}, whose precise formulation will be given in Section~3. Under the incompressible–compressible decomposition $u=\cP u+\cQ u$ where $\cP=\mathrm{Id}-\frac{\nabla\div}{-\Delta}$, it is not difficult to see that the divergence-free part $\cP u$ fulfills a pure heat equation. For the density $a$ and the compressible part of the velocity $\cQ u$, in light of scaling, we denote $\mathcal{U}=\Lambda^{-1}a$ and $\mathcal{V}=\frac{\div}{\Lambda}\cQ u$ so that the main ingredient is to analyze the following coupling system:
\begin{equation}\label{R-E69}
\left\{\begin{array}{l}\d_t\mathcal{U}+ \mathcal{V}=0,\\[1ex]
\d_t\mathcal{V}-\bar\mu\Delta \mathcal{V}-\gamma\Lambda^{2} \mathcal{U}-\kappa \mathcal{U}=0,
\end{array}\right.
\end{equation}
where we simply write  $\bar\mu=2\bar\mu_1+\bar\mu_2$.
Taking the Fourier transform of \eqref{R-E69} with respect to $x\in \mathbb{R}^{d}$ leads to
\begin{equation}
\frac{d}{dt}\left(\begin{array}{c}
\widehat{\mathcal{U}} \\
\widehat{\mathcal{V}} \\
\end{array}\right)
=A(\xi)\left(\begin{array}{c}
\widehat{\mathcal{U}} \\
\widehat{\mathcal{V}} \\
\end{array}\right)
\quad \mbox{with}\quad A(\xi)=\left(
\begin{array}{cc}0 & -1 \\
\kappa+\gamma|\xi|^2 & -\bar\mu|\xi|^2 \\
\end{array}\right).
\end{equation}
To illustrate how we remove the additional $L^2$ assumption at low frequencies, we examine the eigenvalues of $A(\xi)$ in the regime $|\xi|\ll1$:
\begin{eqnarray}\label{eigenvalue}
\lambda_{\pm}=-\frac{\bar\mu|\xi|^2}{2}\pm {\rm i} \sqrt{\kappa-\gamma|\xi|^2-\frac{\bar\mu^2}{4}|\xi|^4}.
\end{eqnarray}
Such eigenvalues naturally generate a diffusive–dispersive semigroup, which is denoted by
$e^{\frac{\bar\mu}{2}\Delta t+\M t}$ 
with $\M$ defined through the Fourier symbol $$\hat\M(\xi)=i\sqrt{\kappa-\gamma|\xi|^2-\frac{\bar\mu^2}{4}|\xi|^4}.$$
We now explain how the dispersive component $e^{\M t}$, under different regimes of the coefficient $\kappa$, interacts with the diffusive part $e^{\frac{\bar\mu}{2}\Delta t}$, which behaves like the heat kernel. In light of Peral's work \cite{peral1}(see also \cite{li1,guo2}), one could infer the following $L^p$ bounds:
\begin{equation}\label{1.23}
\|\ddj e^{\M t}f\|_{L^p}\lesssim\left\{
\begin{aligned}
&|t2^j|^{\frac{d-1}{2}|1-\frac{p}{2}|}\|\ddj f\|_{L^p},\quad \kappa=0,\\ 
&|t2^{2j}|^{\frac{d}{2}|1-\frac{p}{2}|}\|\ddj f\|_{L^p},\quad\,\,\, \kappa>0,
\end{aligned}
\right.\quad\quad \mathrm{for}\quad 2^j<<1.
\end{equation}
Consequently,
\begin{itemize}
\item If $\kappa=0$, the dispersion relation is of the so-called acoustic-wave type, and the corresponding diffusive-dispersive operator $e^{\frac{\bar\mu}{2}\Delta t+\M t}$ may exhibit singularities in $L^{p}$ ($p\neq2$) owing to the weak boundedness of the wave operator (see also Brenier \cite{brenner} regarding the ill-posedness results for hyperbolic equations).

\item If $\kappa>0$, the imaginary part behaves analogously to the linear operator of the Klein-Gordon equation, which shares a {\emph{"higher order" factor}} of $t 2^{2j}$ consistent with the scale of the heat kernel. This inspires us to capture a low-frequency regularization effect for $e^{\frac{\bar\mu}{2}\Delta t+\M t}$ and allows us to consider $L^p$ type estimates exactly the same as those for the heat equation, in sharp contrast with the $L^2$ framework for the purely compressible Navier-Stokes equations.
\end{itemize}
Based on the above analysis, if one lets $\{\mathcal{G}(t)\}_{t\ge0}$ denote the semigroup generated by the linear system \eqref{linearized:1}, then the solution operator associated with \eqref{R-E69} satisfies the following low-frequency estimates for $p\in[1,\infty]$:
\begin{equation}\label{new kernel}
\|\ddj \mathcal{G}(t)(\Lambda^{-1}a_0,u_0)\|_{L^p}
\lesssim
e^{-c2^{2j}t}
\|\ddj (\Lambda^{-1}a_0,u_0)\|_{L^p},
\qquad 2^j<<1,
\end{equation}
for some constant $c>0$. We refer to Proposition \ref{heat Lp} for further details.

We now outline the basic strategy for the global well-posedness theory. In the low-frequency regime, we derive spectrally localized estimates by means of the Duhamel formula together with the sharp semigroup bound \eqref{new kernel}. Consequently, the new low-frequency bounds are consistent with the maximal $L^{p}$-regularity estimates for the heat equation. In the high-frequency regime, we rely on the classical $L^{p}$ energy method, as in standard compressible models. Since both the low- and high-frequency estimates are carried out in $L^{p}$, the nonlinear estimates can be treated by the classical $L^{p}$ product maps, for instance $\dot{B}^{d/p}_{p,1}\times\dot{B}^{d/p-1}_{p,1}$ to  $\dot{B}^{d/p-1}_{p,1}$ ($1\leq p<2d)$, which allows us to avoid resorting to the mixed $L^{2}$–$L^{p}$ framework and its technical index restriction \eqref{pold}.

To establish the large-time behavior of solutions, we develop a new approach to derive optimal decay estimates and remove the smallness assumption on the initial low-frequencies, which essentially consists of the following two steps:
\begin{itemize}
\item[(i)] Uniform propagation of the low-frequency norm;
\item[(ii)] Lyapunov energy estimates and interpolation.
\end{itemize}
Both ingredients are already present in the $L^2$-based setting of Guo and Wang \cite{guo1} and Xu and Xin \cite{xin1}.  The novelty of our result lies in their adaptation to a more general $L^p$ framework $p\in[1,2d)$ and a larger admissible range $\sigma_1\in[\sigma_0-1,\sigma_0)$.

Nevertheless, this strategy brings additional technical difficulties. The first one concerns the restrictions of product laws when $\sigma_1\in[\sigma_0-1,\sigma_0)$. To overcome this issue, we reformulate the system in terms of the momentum variable $m=\rho u$. Then \eqref{1.1} can be rewritten as
\begin{equation}\label{re:momentum}
\left\{
\begin{aligned}
&\partial_t a + \mathrm{div}\,m = 0,\\[1mm]
&\partial_t m
+ \gamma\nabla a+ \kappa\frac{\nabla}{-\Delta} a
- \bar{\mathcal{A}} m
= \dive \mathcal{N},\\
&(a,m)(0,x)=(a_0,m_0)=((\rho_0-1), \rho_0 u_0),
\end{aligned}
\right.
\end{equation}
where the quadratic nonlinear terms are given by
\begin{equation}\label{N}
\begin{aligned}
\mathcal{N}(a,u,m):
&= -u\otimes u-H(a){\rm Id} +\nabla \Lambda^{-2} a\otimes \nabla \Lambda^{-2} a -\frac{1}{2}|\nabla \Lambda^{-2} a|^2 {\rm Id}\\
&\quad+2\tilde\mu_1(a)Dm+\tilde\mu_2(a)\dive m \mathrm{Id}+2(\bar\mu_1+\tilde\mu_1(a))D(Q(a)m)\\
&\quad+(\bar\mu_2+\tilde\mu_2(a))\div(Q(a)m)\mathrm{Id}.
\end{aligned}
\end{equation}
The one-derivative gain exhibited by the nonlinear terms in \eqref{re:momentum} is a key feature when using the localized $L^p$ bound \eqref{new kernel} and Duhamel's principle, as it allows us to circumvent the limitations of classical product laws. A second difficulty arises from the regularity transform between $u$ and $m$. To avoid derivative loss in high frequencies, it is more convenient to work with the velocity  $u$, whereas the low-frequency analysis is naturally performed in terms of the momentum $m$. As a consequence, we first establish estimates for $m$, and then recover the bounds for $u$ through the relation $u = m + Q(a)m$. When $\sigma_0\le \sigma_1<\frac{d}{p}-1$, this conversion can be justified by standard product laws. However, in the range $\sigma_0-1\le \sigma_1<\sigma_0$, such product laws are no longer available, and $u$ and $m$ are no longer equivalent in a straightforward manner. This requires a more delicate analysis to control their interactions.

Finally, regarding methodology, a key novelty of our analysis is {\emph{a time-weighted Lyapunov argument}}. A natural choice of energy would be the same as that used in the global existence theory, namely the $\big(\dot{B}^{d/p-2}_{p,1}\cap \dot{B}^{d/p}_{p,1}\big)\times \dot{B}^{d/p-1}_{p,1}$-norm of $(a,m)$. However, such a choice does not allow us to recover the optimal decay rates for higher-order derivatives. To overcome this difficulty, we refine the energy functional by taking into account the higher-order spatial regularities associated with the $L^{1}_{t}$-type dissipation (see \eqref{1.8}). More precisely, we consider a Lyapunov functional involving the $\dot{B}^{d/p+1}_{p,1}$-norm of $(\Lambda^{-1}a,m)$ in the low-frequency regime. Formally, we get the Lyapunov inequality
\begin{equation}\label{L::}
\begin{aligned}
&\frac{d}{dt}\Big(
\|(\Lambda^{-1}a,m)^{\ell}\|_{\dot{B}^{\frac{d}{p}+1}_{p,1}}
+\|(\Lambda a, u)\|_{\dot{B}^{\frac{d}{p}-1}_{p,1}}^{h}
\Big)
+\|(\Lambda^{-1}a,m)^{\ell}\|_{\dot{B}^{\frac{d}{p}+3}_{p,1}}
+\|(a,\Lambda u)\|_{\dot{B}^{\frac{d}{p}+1}_{p,1}}^{h}  \\
&\qquad\lesssim
G(t)\Big(
\|(\Lambda^{-1}a,m)^{\ell}\|_{\dot{B}^{\frac{d}{p}+1}_{p,1}}
+\|(a,\Lambda u)\|_{\dot{B}^{\frac{d}{p}+1}_{p,1}}^{h}
\Big)\quad\text{with}\quad G(t)\in \mathbb{R}_+.
\end{aligned}
\end{equation}
Consequently, the desired decay estimates are obtained by interpolating between the highest-order dissipation and the lower-order $\dot{B}^{\sigma_1}_{p,\infty}$ norm. 
Moreover, by maximal parabolic regularity, we further improve the decay of $u$ at high frequencies to the $\dot{B}^{d/p+1}_{p,1}$-level. Different from classical approaches, we exploit careful interpolation and Young inequalities in time to bound the nonlinear remainders by an integrable factor $G(t)$ multiplied by the energy itself. This allows us to apply Grönwall’s inequality, rather than relying on smallness to absorb the nonlinear contributions into the dissipation. To make the above formal inequality \eqref{L::} rigorous, we further adapt \eqref{L::} to a time-weighted setting. More precisely, we introduce polynomial time weights of the form $t^{M}$, with $M>0$ arbitrarily large, and derive energy estimates at the level of time integrals rather than differential inequalities, in the spirit of the Fourier splitting method \cite{S-JAMS,schonbekFSM}. It should be emphasized that our approach {\emph{may remain valid even when the critical norms are not small}}. This strategy is of independent interest and is expected to be applicable to other partially dissipative models.

\subsection{Outline and notations}
Finally, the rest of this paper unfolds as follows. In Section 2, we provide a detailed Green's functional analysis to derive new $L^p$ bounds for the linearized system, which is crucial to this paper. In Section 3, we prove the global well-posedness of solutions to the nonlinear problem. Section 4 is devoted to providing the optimal decay estimates. In the last section (``Appendix"), we recall the classical Littlewood-Paley theory and present the proof of local well-posedness in $L^p$-type critical spaces.

Throughout the paper, $C>0$ stands for a generic constant. For brevity, $f\lesssim g$ means that $f\leq Cg$. It will also be understood that $\|(f,g)\|_{X}=\|f\|_{X}+\|g\|_{X}$ for all $f,g\in X$.  Moreover, for $s\in \mathbb{R}$, we define  $\Lambda^{s}f= \mathcal{F}^{-1}(|\xi|^{s}\mathcal{F}(f))$ and simply write $\Lambda=\Lambda^1$.

\section{New \texorpdfstring{$L^p$}{Lp} estimates in linear analysis}\label{section:linear}

From now on, we shall always assume $\kappa=1$ and $\gamma=P'(1)=1$ without loss of generality. In this section, we are going to linearize the perturbed equations and analyze the linear operator in the $L^p$ framework for the following perturbation system \begin{equation}\label{linearized}
\left\{
\begin{aligned}
&\partial_{t}a+\dive u=f,\\
&\partial_{t}u-\bar{\mathcal{A}}u+\nabla a+\frac{\nabla}{-\Delta} a= g,\\
&(a,u)|_{t=0}=(a_{0},u_{0}).
\end{aligned}
\right.
\end{equation}
We establish the following Proposition concerning the uniform estimates for solutions of \eqref{linearized}:
\begin{prop}\label{linear wellposedness}
Let $1 \leq p\leq \infty$, $s \in \mathbb{R}$ and $1 \leq \rho_1 \leq \infty$. 
If $(a,u)$ is a solution to \eqref{linearized} on $[0,T)\times\mathbb{R}^d$, then there exists a constant $C$ depending only on $\bar{\mu}_1$, $\bar{\mu}_2$, $s$ and $d$ such that for $\rho\in[\rho_{1},\infty]$, the following uniform estimates hold:
\begin{itemize}
    \item Low frequency estimates:
   \begin{eqnarray}\label{linear low}
\|(\Lambda^{-1}a,u)\|_{\widetilde{L}_{T}^{\rho}(\dot{B}_{p,1}^{s+\frac{2}{\rho}})}^\ell 
\leq C\Big(\|(\Lambda^{-1}a_0,u_0)\|_{\dot{B}_{p,1}^{s}}+\|(\Lambda^{-1}f,g)\|_{\widetilde{L}_{T}^{\rho_{1}}(\dot{B}_{p,1}^{s-2+\frac{2}{\rho_{1}}})}^\ell\Big),\end{eqnarray}

    \item High frequency estimates:
   \begin{eqnarray}\label{linear high}
\|a\|_{\widetilde{L}_{T}^{\rho}(\dot{B}_{p,1}^{s+1})}^h+\|u\|_{\widetilde{L}_{T}^{\rho}(\dot{B}_{p,1}^{s+\frac{2}{\rho}})}^h 
\leq C\Big(\|(\Lambda a_0,u_0)\|^h_{\dot{B}_{p,1}^{s}}+\|(\Lambda f,g)\|_{\widetilde{L}_{T}^{\rho_{1}}(\dot{B}_{p,1}^{s})}^h\Big).\end{eqnarray} 
\end{itemize}
\end{prop}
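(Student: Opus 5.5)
For the low frequencies I will use a Green-function approach on the Fourier side. For the high frequencies I will use an $L^p$ energy method based on an effective velocity.

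\textbf{Decomposition.} Write $u=\cP u+\cQ u$. The divergence-free part satisfies $\partial_t\cP u-\bar\mu_1\Delta\cP u=\cP g$. This is a pure heat equation, so the classical localized heat bound
\begin{equation*}
\|\ddj e^{t\Delta}f\|_{L^p}\lesssim e^{-c2^{2j}t}\|\ddj f\|_{L^p},\qquad 1\le p\le\infty,
\end{equation*}
together with Duhamel's formula and Young's inequality in time yields both \eqref{linear low} and \eqref{linear high} for $\cP u$, with the gain $2/\rho$. It also uses that $\cP$ is bounded on $\ddj L^p$ for every $p\in[1,\infty]$, since its symbol is smooth and homogeneous of degree $0$ on the annulus.

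\textbf{Low frequencies.} Set $\mathcal U=\Lambda^{-1}a$ and $\mathcal V=\frac{\div}{\Lambda}u$. These satisfy \eqref{R-E69} with sources $(\Lambda^{-1}f,\frac{\div}{\Lambda}g)$. The solution is $e^{tA(\xi)}$ applied to the data, and for $|\xi|\le r_0$ the eigenvalues are \eqref{eigenvalue}. Explicitly,
\begin{equation*}
e^{tA}=e^{-\frac{\bar\mu}{2}|\xi|^2t}\Big(\cos(\omega t)\,\mathrm{I}+\frac{\sin(\omega t)}{\omega}\big(A+\tfrac{\bar\mu}{2}|\xi|^2\mathrm{I}\big)\Big),\qquad \omega=\sqrt{1+|\xi|^2-\tfrac{\bar\mu^2}{4}|\xi|^4}.
\end{equation*}
Here $\omega$ is smooth, bounded below and real-analytic near $0$, and the entries of $A$ are bounded. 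The core step is the multiplier bound: for $2^j\le r_0$,
\begin{equation*}
\big\|\mathcal F^{-1}\big(\varphi(2^{-j}\xi)e^{\pm i\omega(\xi)t}m(\xi)\big)\big\|_{L^1}\lesssim (1+t2^{2j})^{N}
\end{equation*}
for a smooth bounded symbol $m$. To prove it, rescale $\xi=2^j\eta$ and expand $\omega(2^j\eta)=\omega(0)+2^{2j}\tilde\omega(2^j\eta)$ with $\tilde\omega$ smooth, which is exactly the Klein--Gordon feature. The constant phase $e^{\pm i\omega(0)t}$ is unimodular and does not affect $L^1$ norms. Each $\eta$-derivative of $e^{it2^{2j}\tilde\omega(2^j\eta)}$ then produces at most a factor $t2^{2j}$. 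Taking $d+1$ derivatives and using the standard integration-by-parts bound for $L^1$ norms of kernels gives the polynomial estimate. Multiplying by $e^{-\frac{\bar\mu}{2}2^{2j}t}$ absorbs the polynomial and gives \eqref{new kernel}:
\begin{equation*}
\|\ddj e^{tA}(\cdot)\|_{L^p\to L^p}\lesssim e^{-c2^{2j}t}.
\end{equation*}
Duhamel's formula and Young in time then give \eqref{linear low}:
\begin{equation*}
\|\ddj(\mathcal U,\mathcal V)\|_{L^\rho_T L^p}\lesssim 2^{-2j/\rho}\|\ddj(\mathcal U_0,\mathcal V_0)\|_{L^p}+2^{2j(1/\rho_1-1-1/\rho)}\|\ddj(\Lambda^{-1}f,g)\|_{L^{\rho_1}_TL^p}.
\end{equation*}
Multiplying by $2^{j(s+2/\rho)}$ and summing over $j$ finishes the low-frequency part. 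We recover $\cQ u=-\frac{\nabla}{\Lambda}\mathcal V$ by $L^p$-boundedness of localized Riesz transforms.

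\textbf{High frequencies.} Here the Poisson term $\frac{\nabla}{-\Delta}a$ is a lower-order perturbation. I would introduce the effective velocity $w=\cQ u+\bar\mu^{-1}\frac{\nabla}{-\Delta}a$, which is the classical Hoff/Haspot trick. Then $a$ satisfies a damped transport equation,
\begin{equation*}
\partial_ta+\bar\mu^{-1}a=f-\div w,
\end{equation*}
and $w$ satisfies a heat equation,
\begin{equation*}
\partial_tw-\bar\mu\Delta w=\cQ g+\bar\mu^{-1}\frac{\nabla}{-\Delta}\big(f-\div w\big)+\text{l.o.t.},
\end{equation*}
where the lower-order terms involve $\Lambda^{-1}a$ and $\Lambda^{-2}w$. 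For $2^j\ge r_0$ with $r_0$ large, these couplings carry factors $2^{-j}$ or $2^{-2j}$ relative to the dissipation. They can therefore be absorbed, and one gets \eqref{linear high} from the localized heat estimate for $w$ and the exponential damping of $a$ in each block, applied with Young in time.

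\textbf{Main obstacle.} The delicate point is the uniform $L^1$ kernel bound in the low-frequency regime. One must check that the oscillatory factor costs only powers of $t2^{2j}$, not powers of $t2^j$ as for acoustic waves. This requires exploiting that $\omega$ has vanishing gradient at $\xi=0$. A secondary technicality is matching the threshold $r_0$ between the two regimes. The intermediate frequencies are handled by compactness of the frequency band, where $e^{tA}$ has real parts of its eigenvalues bounded above by $-c$.
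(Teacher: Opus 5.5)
Your proposal is correct. Its low-frequency half is essentially the paper's argument: the bound \eqref{claim} in Lemma~\ref{G} is exactly your observation that $\omega$ has vanishing gradient at $\xi=0$. Consequently each $\xi$-derivative of $e^{\pm iBt}$ costs only $|\xi|t$ (your factor $t2^{2j}$ per $\eta$-derivative), and the heat factor absorbs it. Rescaling $\xi=2^j\eta$ and stripping off the unimodular phase $e^{\pm it}$ is a cleaner way of packaging the paper's splitting into $|x|\le 2^{-j}$ and $|x|\ge 2^{-j}$ followed by integration by parts. Two small points on that half. First, keep $e^{-\frac{\bar\mu}{2}2^{2j}|\eta|^2t}$ inside the symbol (or split it off by convolution with a localized heat kernel) rather than multiplying it in afterwards. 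Second, your compactness step must cover the double root $|\xi|=\xi_0$, where $A(\xi)$ is a Jordan block; there $e^{tA(\xi)}$ is still smooth, with $\xi$-derivatives $O((1+t)^ke^{-ct})$, which is what the paper's Cases~2--3 verify.

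The genuine difference is at high frequencies. In this proposition the paper uses no effective velocity. It reads off $\|\ddj\mathcal G(t)\|_{L^p\to L^p}\lesssim e^{-\widetilde r_0t}$ for the rescaled unknown $(\Lambda^{-1}Ha,u)$, $H=\mathrm{Id}-\Delta$ (so $\Lambda^{-1}Ha$ behaves like $\Lambda a$ there), from the same symbol lemma. It then applies Young in time and recovers the $2/\rho$ gain for $u$ by treating $\nabla a+\frac{\nabla}{-\Delta}a$ as a source in the Lam\'e equation. Your Hoff--Haspot route is what the paper reserves for Step~1 of Proposition~\ref{prop31}.

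The semigroup route works for any threshold and gives pointwise-in-time decay, so arbitrary $\rho_1\le\rho$ follow from one Young inequality; but it relies on the explicit eigenvalue asymptotics of a constant-coefficient system. Yours avoids high-frequency symbol calculus and survives transport terms, at the price of a large threshold and some care in two places:
\begin{itemize}
\item $\div w$ in the $a$-equation is not lower order. Measured at the level where $\Lambda a$ and $w$ are comparable, it is as strong as the dissipation $\bar\mu\Lambda^2 w$. The loop closes only because the feedback of $a$ into the $w$-equation is $O(2^{-2j})$.
\item $L^{\rho_1}_T$ maximal regularity costs a factor $2^{2j(1-1/\rho_1)}$ on $w_0$. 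So either treat data and sources separately by linearity (using $L^1_T$ maximal regularity for the data part), or derive a blockwise differential inequality for $2^j\|\ddj a\|_{L^p}+K\|\ddj w\|_{L^p}$ with $K$ large. The latter gives $e^{-ct}$ decay before you apply Young in time.
\end{itemize}
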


We will prove the above Proposition \ref{linear wellposedness}, which relies on a delicate analysis of the Green matrix of the solution in different frequency regimes.

\subsection{Analysis on the Green matrix}\label{Sublinear}

Let $v=(\Lambda^{-1}Ha,u)$ and $v_0=(\Lambda^{-1}Ha_0,u_0)$ be defined with an inhomogeneous operator $H={\rm Id}-\Delta$ measuring the interactions between pressure and electrostatic
potential. It follows from the standard Duhamel formula that the solution of \eqref{linearized} satisfies the following equality:
\begin{equation}v(t)=\mathcal{G}(t)v_0+\int^{t}_0\mathcal{G}(t-s)(\Lambda^{-1}Hf,g)\,ds,\label{Green}
\end{equation}
where $v_0=(\Lambda^{-1}Ha_0,u_0)$, and $\mathcal{G}(t)_{t\geq0}$ are the semigroups associated with the linear system (\ref{linearized}).

To prove Proposition \ref{heat Lp}, we need to analyze the explicit expression of $\mathcal{G}(t)=e^{-tA(D)}$, which is given by the Fourier symbol
\begin{align*}
\widehat{\mathcal{G}}(t,\xi)
&=
\left(
\begin{array}{cc}
\widehat{\mathcal{G}}_{11}&\widehat{\mathcal{G}}_{12}\\
\widehat{\mathcal{G}}_{21}&\widehat{\mathcal{G}}_{22}
\end{array}
\right)
=
\left(
\begin{array}{cc}
p_{1}(|\xi|) & -p_{2}(|\xi|)\,\xi^{\top}\\
-p_{3}(|\xi|)\,\xi & p_0(|\xi|)\mathrm{I}_{\rm{d}}+p_{4}(|\xi|)\,\xi\xi^{\top}
\end{array}
\right),
\end{align*}
where $\mathrm{I}_{\rm{d}}$ is a $d\times d $ unit matrix, and  $p_{i}(|\xi|)$ ($i=0, 1,2,3,4$) are given by
\[
\begin{alignedat}{2}
p_{0}(|\xi|) \; &=\; e^{-\muu_1|\xi|^2 t},\\[1mm]
p_{1}(|\xi|) \; &=\; \frac{\lambda_+ e^{\lambda_- t} - \lambda_- e^{\lambda_+ t}}{\lambda_+ - \lambda_-},\\
p_{2}(|\xi|) \; &=\; -i\,\frac{\widehat H(|\xi|)}{|\xi|}\,
\frac{e^{\lambda_+ t} - e^{\lambda_- t}}{\lambda_+ - \lambda_-},\\[1mm]
p_{3}(|\xi|) \; &=\; -i\,\frac{1}{\widehat H(|\xi|)}
\bigl(|\xi|^3+|\xi|\bigr)\,
\frac{e^{\lambda_+ t} - e^{\lambda_- t}}{\lambda_+ - \lambda_-},\\
p_{4}(|\xi|) \; &=\;
\left(
\frac{\lambda_+ e^{\lambda_+ t} - \lambda_- e^{\lambda_- t}}{\lambda_+ - \lambda_-}
- e^{-\muu_1|\xi|^2 t}
\right)\frac{1}{|\xi|^2}.
\end{alignedat}
\]
Here, the eigenvalues $\lambda_{\pm}(\xi)$ are defined by
$$\lambda_{\pm}(\xi)=\frac{-\muu|\xi|^2\pm\sqrt{\muu^2|\xi|^4-4(1+|\xi|^2)}}{2},$$
with $\bar\mu=2\bar\mu_1+\bar\mu_2$. 

We now establish the following lemma, which provides the key spectrally localized $L^p$ estimates for the semigroup $\mathcal{G}(t)$. 
In particular, the low-frequency estimate is the {\emph{main ingredient}} of this work. 
Compared with the Poisson-free case, the Klein--Gordon-type regularization allows us to work in general $L^p$ spaces.

\begin{prop}\label{heat Lp}
Let $j_0\in\Z$ be an arbitrary (but fixed) integer as the threshold separating the low- and high-frequency regimes.
Let $p\in[1,\infty]$ and let $f$ be any tempered distribution.
Then there exist positive constants $r_0,\widetilde r_0$ (depending only on $d$ and $j_0$) such that,
for all $j\in\Z$, the following $L^p$ estimates hold:
\begin{equation}\label{BB-low-high}
\|\ddj \mathcal{G}(t)f\|_{L^p}
\lesssim
\begin{cases}
e^{-r_0\,2^{2j}t}\,\|\ddj f\|_{L^p}, & j\le j_0\,{\rm(}\text{low frequencies}{\rm)},\\[1mm]
e^{-\widetilde r_0\,t}\,\|\ddj f\|_{L^p}, & j> j_0\,{\rm(}\text{high frequencies}{\rm)}.
\end{cases}
\end{equation}
\end{prop}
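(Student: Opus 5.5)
The plan is to reduce the estimate to an $L^1$ bound on the convolution kernel of each spectrally localized entry of $\widehat{\mathcal{G}}$. Fix $\varphi$ supported in an annulus with $\varphi=1$ on the support of the Littlewood--Paley function. Then for each entry $m(t,\xi)$ of $\widehat{\mathcal{G}}$ we have
\[
\|\ddj \mathcal{G}(t)f\|_{L^p}\le \|\mathcal{F}^{-1}(\varphi(2^{-j}\cdot)m(t,\cdot))\|_{L^1}\|\ddj f\|_{L^p}
\]
for every $p\in[1,\infty]$, by Young's inequality. Rescaling $\xi=2^j\eta$ leaves the $L^1$ norm of the kernel unchanged, and the standard Bernstein-type bound gives
\[
\|\mathcal{F}^{-1}(\varphi\, m_j)\|_{L^1}\lesssim \sum_{|\alpha|\le [d/2]+1}\|\partial_\eta^\alpha(\varphi(\eta)\, m(t,2^j\eta))\|_{L^2},
\]
where $m_j(t,\eta)=m(t,2^j\eta)$. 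So it suffices to show that on the fixed annulus $\frac34\le|\eta|\le\frac83$, every $\eta$-derivative of order at most $[d/2]+1$ of $m(t,2^j\eta)$ is bounded by $C e^{-r_02^{2j}t}$ when $j\le j_0$, and by $Ce^{-\tilde r_0 t}$ when $j>j_0$. The matrix factors $\xi^\top/|\xi|$, $\xi/|\xi|$ and $\xi\xi^\top/|\xi|^2$ are homogeneous of degree $0$ and smooth on the annulus, so they are harmless. One first regroups the entries in this form: for instance $p_2\xi^\top=\widehat H(\xi)\frac{\xi^\top}{|\xi|}\frac{e^{\lambda_+t}-e^{\lambda_-t}}{\lambda_+-\lambda_-}$, and $p_4\xi\xi^\top$ equals the bracket in $p_4$ times $\xi\xi^\top/|\xi|^2$. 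After this regrouping, only scalar symbols built from $e^{\lambda_\pm t}$, $\lambda_\pm$ and $(\lambda_+-\lambda_-)^{-1}$ remain.

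\textbf{Very low frequencies} ($2^j\le\delta$, with $\delta$ small). Write $\lambda_\pm=-\frac{\bar\mu}{2}|\xi|^2\pm i\omega(\xi)$ with $\omega(\xi)=\sqrt{1+|\xi|^2-\frac{\bar\mu^2}{4}|\xi|^4}$. This $\omega$ is a smooth radial function of $|\xi|^2$ with $\omega\approx1$, so $\lambda_+-\lambda_-=2i\omega$ stays away from $0$. The symbols then take the form
\[
e^{-\frac{\bar\mu}{2}|\xi|^2t}\times\big(\text{smooth bounded function}\big)\times\big(\cos(\omega t)\ \text{or}\ \sin(\omega t)\big),
\]
together with the pure heat factor $e^{-\bar\mu_1|\xi|^2t}$. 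The key point, and the Klein--Gordon gain, is that $\nabla_\xi\omega=O(|\xi|)$ because $\omega$ depends on $|\xi|^2$. Hence each $\eta$-derivative falling on $e^{\pm i\omega(2^j\eta)t}$ produces a factor of size $t\,2^{2j}|\eta|$, rather than the factor $t2^j$ that the acoustic case $\kappa=0$ would give. Derivatives falling on the heat factor $e^{-\frac{\bar\mu}{2}2^{2j}|\eta|^2t}$ also produce powers of $t2^{2j}$. Therefore each derivative is bounded by $(1+t2^{2j})^{N}e^{-\frac{\bar\mu}{2}\cdot\frac{9}{16}2^{2j}t}$, and the polynomial factor is absorbed by halving the exponential rate. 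This gives $e^{-r_02^{2j}t}$.

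\textbf{Intermediate frequencies} ($\delta<2^j\le 2^{j_0}$). This is a bounded set of frequencies, but it may contain the double-root sphere where $\bar\mu^2|\xi|^4=4(1+|\xi|^2)$. There I will rewrite the singular-looking quotients $\frac{e^{\lambda_+t}-e^{\lambda_-t}}{\lambda_+-\lambda_-}$ and $\frac{\lambda_+e^{\lambda_\pm t}-\lambda_-e^{\lambda_\mp t}}{\lambda_+-\lambda_-}$ as entire functions of $(\lambda_++\lambda_-,\lambda_+\lambda_-)=(-\bar\mu|\xi|^2,1+|\xi|^2)$. For example,
\[
\frac{e^{\lambda_+t}-e^{\lambda_-t}}{\lambda_+-\lambda_-}=t\int_0^1 e^{(s\lambda_++(1-s)\lambda_-)t}\,ds.
\]
In this form the symbols are smooth in $\xi$ on the compact region, and $\mathrm{Re}\,\lambda_\pm\le -c$ there. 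Derivatives cost at most polynomial factors in $t$, which are absorbed into $e^{-ct}$. Since $2^{2j}\le 2^{2j_0}$ in this window, $e^{-ct}\lesssim e^{-c2^{-2j_0}2^{2j}t}$, which yields the low-frequency bound with $r_0$ depending on $j_0$. The same argument gives $e^{-\tilde r_0 t}$ on any compact window above $2^{j_0}$.

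\textbf{High frequencies} ($2^j\ge R$, with $R$ large). The roots are real and distinct, with expansions in $|\xi|^{-2}$:
\[
\lambda_-=-\bar\mu|\xi|^2+O(1),\qquad \lambda_+=-\frac1{\bar\mu}+O(|\xi|^{-2}),\qquad \lambda_+-\lambda_-\sim\bar\mu|\xi|^2.
\]
Each has symbol-type derivative bounds $|\partial^\alpha\lambda_\pm|\lesssim|\xi|^{2-|\alpha|}$ and $|\xi|^{-|\alpha|}$, respectively. The terms containing $e^{\lambda_-t}$ are heat-like: derivatives give powers of $t2^{2j}$, which are absorbed by $e^{-c2^{2j}t}\le e^{-ct}$. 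The terms containing $e^{\lambda_+t}$ are damped at rate $1/\bar\mu$, and their derivatives in $\eta$ produce factors $t\,2^j|\partial\lambda_+|\lesssim t2^{-2j}\le t$. This gives $e^{-\tilde r_0t}$. The coefficient prefactors must also stay bounded: $\lambda_\pm/(\lambda_+-\lambda_-)$ is bounded, $\widehat H/(|\xi|(\lambda_+-\lambda_-))\sim |\xi|^{-1}$, and $(|\xi|^3+|\xi|)/(\widehat H(\lambda_+-\lambda_-))\sim|\xi|^{-1}$.

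I expect the main obstacle to be the careful bookkeeping of derivatives in the very low frequency regime. One has to verify that every derivative of the oscillatory factor really gains $|\xi|$, so that only powers of $t2^{2j}$ and never of $t2^j$ appear. This is exactly what fails when $\kappa=0$. The other delicate point is the uniform treatment near the double-root sphere, which I will handle with the integral representation above.
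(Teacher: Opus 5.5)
Your proposal is correct and follows essentially the same route as the paper. You reduce via Young's inequality to an $L^1$ bound on the localized kernel, and obtain that bound from the symbol estimates $|\nabla_\xi^\alpha \widehat{\mathcal{G}}_{ij}|\lesssim |\xi|^{-|\alpha|}e^{-r_0|\xi|^2t}$ (your rescaled bounds on the unit annulus are exactly this). The key Klein--Gordon observation $\nabla_\xi\omega=O(|\xi|)$, so that oscillatory derivatives cost only powers of $t2^{2j}$, is the paper's claim \eqref{claim}, and the very-low, intermediate/double-root and high regimes are handled in the same way. The differences are cosmetic: you use scaling together with $\|\mathcal{F}^{-1}g\|_{L^1}\lesssim\sum_{|\alpha|\le [d/2]+1}\|\partial^\alpha g\|_{L^2}$ where the paper splits $|x|\lessgtr 2^{-j}$ and integrates by parts, and your treatment of the double root is cleaner than the paper's Taylor expansion. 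There you write the quotients as entire functions of $(\lambda_++\lambda_-,\lambda_+\lambda_-)$; the derivative bounds then follow from Cauchy estimates or the contour representation of $e^{tA}$.
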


The proof of Proposition \ref{heat Lp} mainly relies on the following lemma, which concerns the pointwise behavior of the Fourier symbol for $\mathcal{G}$:

\begin{lem}\label{G}
Let $A_0>0$ and $\alpha\in\mathbb{N}$.
There exist constants $r_0,\widetilde r_0>0$ depending only on $d$ such that, for all $t\ge0$,
all $i,j\in\{1,2\}$ and all $\xi\in\R^d$,
\begin{equation}\label{AA-cases}
\bigl|\nabla_\xi^\alpha \widehat{\mathcal{G}}_{ij}(t,\xi)\bigr|
\;\lesssim\;
|\xi|^{-\alpha}
\begin{cases}
e^{-r_0|\xi|^{2}t}, & |\xi|\le A_0,\\[1mm]
e^{-\widetilde r_0 t}, & |\xi|>A_0.
\end{cases}
\end{equation}
\end{lem}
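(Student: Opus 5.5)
The plan is to estimate each entry $p_0,\dots,p_4$ directly from its explicit formula, treating separately a compact low-frequency region, the high-frequency region, and the finitely many points where $\lambda_+=\lambda_-$. The discriminant is $\muu^2|\xi|^4-4(1+|\xi|^2)$, which vanishes exactly at one radius $|\xi|=R_*$. For $|\xi|<R_*$ the eigenvalues are complex conjugate with $\mathrm{Re}\,\lambda_\pm=-\tfrac{\muu}{2}|\xi|^2$. For $|\xi|>R_*$ they are real, with $\lambda_+\to-1/\muu$ and $\lambda_-\sim-\muu|\xi|^2$ as $|\xi|\to\infty$. I would split $\R^d$ into three zones:
\begin{itemize}
\item $|\xi|\le A_0$ with $A_0$ small, in particular below $R_*$;
\item a compact annulus $A_0\le|\xi|\le A_1$, which contains $R_*$;
\item $|\xi|\ge A_1$ with $A_1$ large.
\end{itemize}
On the annulus I would use the representations
$$\frac{e^{\lambda_+t}-e^{\lambda_-t}}{\lambda_+-\lambda_-}=\int_0^1 t\,e^{(\theta\lambda_++(1-\theta)\lambda_-)t}\,d\theta,$$
together with analogous integral formulas for $p_1$ and $p_4$. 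These formulas are entire symmetric functions of $(\lambda_+,\lambda_-)$, so they depend smoothly on $\lambda_++\lambda_-=-\muu|\xi|^2$ and $\lambda_+\lambda_-=1+|\xi|^2$. This removes the apparent singularity at $R_*$. Since $\max\mathrm{Re}\,\lambda_\pm\le -c<0$ on the compact annulus, every $\xi$-derivative produces at most polynomial factors in $t$. These are absorbed as $t^k e^{-ct}\lesssim e^{-ct/2}$, and $|\xi|^{-\alpha}\sim 1$ there.

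\emph{Low frequencies.} For $|\xi|\le A_0$ I would write $\lambda_\pm=-\tfrac{\muu}{2}|\xi|^2\pm i\omega(\xi)$ with $\omega=\sqrt{1+|\xi|^2-\tfrac{\muu^2}{4}|\xi|^4}\approx1$, which is smooth and radial. Then $\lambda_+-\lambda_-=2i\omega$ is bounded away from zero. Each entry becomes a combination of $e^{-\frac{\muu}{2}|\xi|^2t}\cos(\omega t)$, $e^{-\frac{\muu}{2}|\xi|^2t}\sin(\omega t)/\omega$ and $e^{-\muu_1|\xi|^2t}$, multiplied by smooth symbols in $|\xi|^2$. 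In $p_2$ and $p_3$ the prefactors $\widehat H/|\xi|$ and $(|\xi|^3+|\xi|)/\widehat H$ combine with $\xi^\top$ and $\xi$ into zero-homogeneous factors such as $\xi/|\xi|$. The entry $p_4\,\xi\xi^\top$ should be read as a single expression $\frac{\xi\xi^\top}{|\xi|^2}(\cdots)$. These homogeneous factors contribute $|\xi|^{-\alpha}$ under $\alpha$ derivatives. The difficulty is that each derivative falling on the oscillatory factor $e^{\pm i\omega t}$ produces $t\,\nabla\omega=O(t|\xi|)$. This is precisely the Klein--Gordon gain: $\nabla_\xi\omega=O(|\xi|)$ rather than $O(1)$, unlike the acoustic case $\omega\sim|\xi|$. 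Derivatives of $e^{-c|\xi|^2t}$ produce factors $t|\xi|$ and $t$ as well. By the chain rule, $\nabla^\alpha$ of the whole symbol is a sum of terms bounded by $|\xi|^{-\alpha}(t|\xi|^2)^k e^{-\frac{\muu}{2}|\xi|^2t}$ with $k\le\alpha$. These are controlled by $|\xi|^{-\alpha}e^{-r_0|\xi|^2t}$ for any $r_0<\min(\muu/2,\muu_1)$.

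\emph{High frequencies.} For $|\xi|\ge A_1$ I would expand $\lambda_+=-\frac{1+|\xi|^2}{\muu|\xi|^2}+O(|\xi|^{-4})$, which is a symbol of order $0$ tending to $-1/\muu$, and $\lambda_-=-\muu|\xi|^2+O(1)$. Both are classical symbols, with $\lambda_+-\lambda_-\sim\muu|\xi|^2$. Then $p_1\approx e^{\lambda_+t}+O(|\xi|^{-2})e^{\lambda_-t}$, and similar expansions hold for $p_2$ and $p_3$. For $p_2$ the prefactor $\widehat H/|\xi|\sim|\xi|$ together with $\xi^\top$ gives $|\xi|^2$, which is balanced by $1/(\lambda_+-\lambda_-)$. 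For $p_3$ the prefactor is bounded. For $p_4$ one checks that $\lambda_\mp/(\lambda_+-\lambda_-)$ and $1/|\xi|^2$ are order-zero symbols. Derivatives of $e^{\lambda_+t}$ give factors $t\,|\nabla^k\lambda_+|\lesssim t|\xi|^{-k-2}$, which are absorbed into $e^{-\widetilde r_0t}$. Derivatives of $e^{\lambda_-t}$ and of $e^{-\muu_1|\xi|^2t}$ give $(t|\xi|^2)^k|\xi|^{-k}e^{-c|\xi|^2t}\lesssim|\xi|^{-k}e^{-\widetilde r_0t}$. This gives the bound $|\xi|^{-\alpha}e^{-\widetilde r_0 t}$.

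The main obstacle is the low-frequency step. There one must check carefully that every derivative hitting $e^{i\omega t}$ is paired with the factor $|\xi|$ coming from $\nabla\omega=O(|\xi|)$, so that powers $(t|\xi|^2)^k$ and never $t^k$ alone appear. One must also check that the singular prefactors in $p_2$, $p_3$ and $p_4$ cancel into zero-homogeneous multipliers. The apparent singularity at $R_*$ and the dependence of the constants on $A_0$ are handled by the symmetric-function representation on the compact annulus.
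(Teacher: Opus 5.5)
Your proof is correct. In the low- and high-frequency zones it follows the paper's argument. The paper's key claim $|\partial_r^aB(r)|\lesssim r^{2-a}$ is exactly your observation $\nabla_\xi\omega=O(|\xi|)$, which pairs every factor $t$ with $|\xi|^2$. At high frequencies you do the same bookkeeping, with $\lambda_+$ a symbol of order $0$ and $\lambda_-$ a symbol of order $2$.

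The real difference is the intermediate region.
\begin{itemize}
\item The paper keeps $A_0\gg1$ and places a cutoff around the double-root radius $\xi_0$. There it expands $\sinh(\tilde B t)/\tilde B$ (its Case 2), and it treats the rest of $[c_1,A_0]$ by a spectral gap (its Case 3). It then has to control derivatives of the cutoffs.
\item You take a small low-frequency threshold and handle the whole compact annulus at once. You write each entry through divided differences, i.e.\ as entire functions of $s=\lambda_++\lambda_-=-\bar\mu|\xi|^2$ and $q=\lambda_+\lambda_-=1+|\xi|^2$.
\end{itemize}
This is the same removable-singularity mechanism, organized more cleanly: there are no cutoffs, no case split at $R_*$, and complex and real roots are treated together.

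One step should be stated explicitly. You cannot differentiate the $\theta$-integral in $\xi$ near $R_*$, because $\lambda_\pm$ are not smooth there. So the claim that $\xi$-derivatives only cost powers of $t$ must come from bounds on the $(s,q)$-derivatives. Two ways to get them:
\begin{itemize}
\item Cauchy estimates on a small complex neighbourhood of the compact set of $(s,q)$-values; the roots still have real part $\le -c/2$ there.
\item Hermite--Genocchi: for instance, the $q$-derivative of the first divided difference of $e^{\lambda t}$ is minus its third divided difference at the nodes $\lambda_+,\lambda_+,\lambda_-,\lambda_-$, which is bounded by $t^3e^{-ct}$.
\end{itemize}

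Two minor points. Since the lemma fixes an arbitrary $A_0$, keep your own thresholds separate from it. Then note that on any compact annulus the bounds $e^{-ct}$ and $e^{-c'|\xi|^2t}$ are interchangeable; this converts your three-zone bounds into the two cases of the statement. Also, the remainder in your expansion of $\lambda_+$ is $O(|\xi|^{-2})$, not $O(|\xi|^{-4})$. This is harmless, because you only use that $\lambda_+$ is smooth in $|\xi|^{-2}$.
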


\begin{proof}
We begin by proving the low-frequency estimate in \eqref{AA-cases}. Without loss of generality, we mostly pay attention to $\widehat{\mathcal{G}}_{11}(t,\xi)$ and assume throughout that $A_0\gg1$. Let  $\xi_0$ be the positive solution of 
$$\frac{\bar{\mu}^2}{4}x^4- x^2-1=0.$$  
In the regime $|\xi|\leq A_0$, we decompose $\widehat{\mathcal{G}}_{11}(t,\xi)$ into the following three regimes with $c_1,c_2>0$:
\begin{itemize}
    \item 
[(1)]. $|\xi|\leq c_1\ll1$: $\chi\left(\frac{|\xi|}{c_1}\right)\widehat{\mathcal{G}}_{11}(t,\xi)$;

    \item 
[(2)]. $\big||\xi|-|\xi_0|\big|\leq c_2\ll1$: $\chi\left(\frac{|\xi|-\xi_0}{c_2}\right)\widehat{\mathcal{G}}_{11}(t,\xi)$;

    \item 
[(3)]. Otherwise: $\left(1-\chi\left(\frac{|\xi|}{c_1}\right)-\chi\left(\frac{|\xi|-\xi_0}{c_2}\right)\right)\widehat{\mathcal{G}}_{11}(t,\xi)$. 
\end{itemize}

Each regime can be addressed as follows.

\begin{itemize}

\item \underline{Case 1: $ |\xi|\leq c_1\ll1$}.
\end{itemize}
We start with the pointwise estimates for $\widehat{\mathcal{G}}_{11}$ under $ |\xi|\leq c_1$ with some $c_1\ll1$. Since $\frac{\bar{\mu}^2}{4}|\xi|^4-1-|\xi|^2<0$, we have
$$\lambda_{\pm}=-\frac{\bar{\mu}}{2}|\xi|^2\pm iB(\xi)\quad \text{with}\quad B(\xi)=\sqrt{1+|\xi|^2-\frac{\bar{\mu}^2}{4}|\xi|^4}$$
and further
$$|\widehat{\mathcal{G}}_{11}|\lesssim\left|\frac{\lambda_+e^{\lb t}}{B(\xi)}\right|+\left|\frac{\lb e^{\la t}}{B(\xi)}\right|
$$

Then, (\ref{AA-cases}) with $\alpha=0$ is directly obtained once we notice $|\la(\xi)|\sim|B(\xi)|\sim1$.
To prove the  derivative estimate of the semigroup in Fourier space, the {\emph{key point}} is the following bounds:
\begin{align}
    |\partial^a_{r}B(r)|\leq r^{2-a}\quad\text{for any integer}\,   a\geq1\quad\text{and}\quad  r\leq c_1\ll1.\label{claim}
\end{align}
To prove \eqref{claim}, we set $F(r)=B(r)^2=1+ r^2-\frac{\bar\mu^2}{4}r^4$. Choosing $c_1>0$ small, we have $F(r)\ge\frac12$ on $[0,c_1]$.
Moreover, 
\begin{align*}
\partial_r^1 F(r)&=2 r-\bar\mu^2 r^3=O(r),\qquad\,\,\,
\partial_r^2 F(r)=2-3\bar\mu^2 r^2=O(1),\\
\partial_r^3 F(r)&=-6\bar\mu^2 r=O(r),\qquad\quad \quad 
\partial_r^4 F(r)=-6\bar\mu^2=O(1),
\end{align*}
and $\partial_r^5 F\equiv0$ for $k\ge5$.
By the Faà di Bruno formula, each $\partial_r^a B$
is a finite sum of terms of the form
\[
F(r)^{\frac12-\ell}\prod_{k=1}^{4}\bigl(\partial_r^k F(r)\bigr)^{\alpha_k},
\qquad
\alpha_1+2\alpha_2+3\alpha_3+4\alpha_4=a,
\qquad
\ell=\alpha_1+\alpha_2+\alpha_3+\alpha_4\ge1.
\]
Using $F^{\frac12-\ell}\lesssim1$ on $[0,c_1]$ and the order of $\partial^k F(r)$ ($k=1,2,3,4$),
we infer that each term satisfies
\[
\left|F(r)^{\frac12-\ell}\prod_{k=1}^{4}(\partial_r^kF(r))^{\alpha_k}\right|
\lesssim r^{\alpha_1+\alpha_3}.
\]
Note that $\alpha_1+\alpha_3\geq 2-a$ holds. Indeed, for $a\ge2$, one has $\alpha_1+\alpha_3\geq 0\ge 2-a$.
The case $a=1$ is trivial, since necessarily $\alpha_1=1$ and $\alpha_3=0$.
Hence,  summing up finitely many such terms yields  \eqref{claim}.

Now let $\alpha=1$, then apparently when the derivative lands on  heat kernels $e^{-\frac{\bar{\mu}}{2}|\xi|^2t}$, we easily get the first inequality in (\ref{AA-cases}). Now let us compute when the derivative lands on polynomials
\begin{eqnarray}
\left|\partial_{\xi_{i}}\frac{\lambda_{+}(\xi)}{B(\xi)}\right|=\left|\frac{\partial_r\lambda_{+}(\xi)B(\xi)-\lambda_{+}(\xi)\partial_r B(\xi)}{B^2(\xi)}\frac{\xi_{i}}{|\xi|}\right|\lesssim|\xi|^{-1}.
\end{eqnarray}
Finally, we consider derivatives landing on the conjugate part. Indeed, we have
\begin{eqnarray}
|\partial_{\xi_{i}}e^{iB(\xi)t}|=\left|e^{iB(\xi)t}t\partial_r B(|\xi|)\frac{\xi_{i}}{|\xi|}\right|\lesssim |\xi|t,
\end{eqnarray}
then we could utilize the heat kernel $|\xi|te^{-\frac{\bar{\mu}}{2}|\xi|^2t}\leq|\xi|^{-1}e^{- r_0|\xi|^2t}$  with $r_0\ll1$ to arrive
\begin{eqnarray*}
\left|\nabla_{\xi}\Big(\frac{\lambda_{+}(\xi)}{2iB(\xi)}e^{\lambda_{-}(\xi)t}\Big)\right|\lesssim |\xi|^{-1}e^{- r_0|\xi|^2 t}.
\end{eqnarray*}
We can symmetrically handle the other term. Similarly, one could generalize the above estimate to higher-order derivatives of $\xi$ and we get (\ref{AA-cases}) for $|\xi|\leq A_0$.

\begin{itemize}

\item \underline{Case 2: $\big||\xi|-|\xi_0|\big|\leq c_2\ll1$}.
\end{itemize}

In this case, without loss of generality, we assume $\frac{\muu^2}{4}|\xi|^4-|\xi|^2-1>0$. Notice that $\big||\xi|-|\xi_0|\big|\leq c_2\ll1$ naturally implies $\tilde B(\xi)\ll1$. Hence, with the aid of Taylor's extension
$$\tilde B(\xi)=\sum_{k\in\N^+} d_k(|\xi|-\xi_0)^k,$$
there holds
\begin{eqnarray*}
\chi\Big(\frac{|\xi|-\xi_0}{c_2}\Big)\widehat{\mathcal{G}}_{11}(\xi)&=&\chi\Big(\frac{|\xi|-\xi_0}{c_2}\Big)\left(e^{\lb t}-e^{-\muu|\xi|^2t}\frac{\lb(e^{-\tilde B(\xi) t}-e^{\tilde B(\xi) t})}{2\tilde B(\xi)}\right)\\
\nonumber
&=&\chi\Big(\frac{|\xi|-\xi_0}{c_2}\Big)\left(e^{\lb t}-\lb\Big(2\tilde B(\xi)+\frac{2}{3!}\tilde B^2(\xi)+...\Big)e^{-\muu|\xi|^2t}\right)\\
\nonumber
&=&\chi\Big(\frac{|\xi|-\xi_0}{c_2}\Big)\left(e^{\lb t}-\lb e^{-\muu|\xi|^2t}\sum_{i\in\N^+} \tilde d_i(|\xi|-\xi_0)^i\right),  
\end{eqnarray*}
which implies
$$\Big|\chi\Big(\frac{|\xi|-\xi_0}{c_2}\Big)\widehat{\mathcal{G}}_{11}(\xi)\Big|\lesssim e^{-c|\xi_0|^2t}\lesssim e^{-c' t}\lesssim e^{-c'' |\xi|^2 t}.$$
for some small constants $c, c, c''>0$. Furthermore, in terms of the derivative's estimate, we only present when the derivative lands on a localized function, where
\begin{eqnarray}
\left|\partial_{\xi_{i}}\chi\Big(\frac{|\xi|-\xi_0}{c_2}\Big)\right|=\left|c_2^{-1}\chi'\Big(\frac{|\xi|-\xi_0}{c_2}\Big)\frac{\xi_i}{|\xi|}\right|\leq c_{\xi_0}|\xi|^{-1},
\end{eqnarray}
since $\xi_0$ is the fixed constant. Imposing similar estimates on higher order derivatives, we obtain \eqref{AA-cases} for Case 2.


\begin{itemize}
\item \underline{Case 3: Otherwise: $c_1\le |\xi|\le \xi_0-c_2$ or $\xi_0+c_2\le |\xi|\le A_0$. }
\end{itemize}
In this case, there exists $\widetilde r_1>0$
such that
\begin{equation}\label{case3-gap}
\Re\lambda_\pm(\xi)\le -\widetilde r_1,
\qquad
|\lambda_\pm(\xi)|\ge \widetilde r_1,
\qquad
B(\xi)\ge \widetilde r_1,
\qquad
\tilde B(\xi)\ge \widetilde r_1.
\end{equation}
Consequently, all coefficients appearing in the explicit formula of
$\widehat{\mathcal{G}}_{11}(t,\xi)$ (and similarly for the other entries)
are smooth and uniformly bounded with respect to $\xi$, and repeated differentiation
in $\xi$ only produces finite sums of terms of the form
\[
t^m\,Q_{\alpha,m}(\xi)\,e^{\lambda_\pm(\xi)t},
\qquad 0\le m\le|\alpha|,
\]
where $Q_{\alpha,m}$ is smooth and bounded (depending on $\alpha$ and $m$) when $\xi$ lies in this regime.
Using \eqref{case3-gap} and $t^m e^{-\widetilde r_1 t}
\lesssim e^{-\frac12\widetilde r_1 t}$, we infer that for all multi-indices
$\alpha$,
\[
\bigl|\nabla_\xi^\alpha \widehat{\mathcal{G}}_{11}(t,\xi)\bigr|
\lesssim e^{-\widetilde r_1 t}\lesssim |\xi|^{-|\alpha|}e^{-\widetilde c_1 |\xi|^2 t}.
\]
with some small constant $\widetilde c_1>0$. The other components $\widehat{\mathcal{G}}_{ij}$ are treated in the same way. We thus complete the proof of \eqref{AA-cases} when $|\xi|\leq A_0$.


Finally, for the high frequencies $|\xi|\geq A_0$, we have  $\frac{\muu^2}{4}|\xi|^4-|\xi|^2-1\gg1$.  We still focus on $\widehat{\mathcal{G}}_{11}$. At this stage, we write $\tilde B(\xi)=\sqrt{\frac{\muu^2}{4}|\xi|^4-|\xi|^2-1}$ and have the following facts:
$$|\lb(\xi)|\sim|\xi|^2,\quad
|\la(\xi)|=\left|\frac{2(1+|\xi|^2)}{\lb(\xi)}\right|\sim1,\quad|\la-\lb|\sim|B(\xi)|\sim|\xi|^2,$$
which implies
$$|\widehat{\mathcal{G}}_{11}|\lesssim|\xi|^{-2}\big(e^{-|\xi|^{2} t}+|\xi|^{2}e^{- t}\big)\lesssim e^{-\tilde r_0 t},$$
where $\tilde r_0$ is a sufficiently small constant. The derivative estimates could be bounded in the same fashion as low frequencies, and we are led to  the second inequality in (\ref{AA-cases}). This concludes Lemma \ref{G}.

\end{proof}

\noindent 
\textbf{Proof of Proposition \ref{heat Lp}.}
We now prove Proposition \ref{heat Lp}. It suffices to establish, for $j\le j_0$,
the kernel estimate
\begin{eqnarray}\label{kernel}
\Big\|\mathcal{F}^{-1}\Big(\widehat{\mathcal{G}_{ij}}(\xi)\varphi(\xi/2^j)\Big)\Big\|_{L^1}\lesssim  e^{- r_02^{2j}t}
\end{eqnarray}
since the $L^p$ bound in \eqref{BB-low-high} then follows from Young's inequality. Here $r_0>0$ can be chosen to be suitably small.  

For simplicity, we only consider the component $\widehat{\mathcal{G}}_{11}$.  We begin with $|x|\leq2^{-j}$. Direct computations indicate
\begin{equation}
\begin{aligned}\label{pointwise1}
\Big\|\mathcal{F}^{-1}\Big(\widehat{\mathcal{G}}_{11}(\xi)\varphi(\xi/2^j)\Big)\Big\|_{L^1_{|x|\le 2^{-j}}}
&=\Big\|\int_{\R^d} e^{ix\cdot\xi}\widehat{\mathcal{G}}_{11}(\xi)\varphi(\xi/2^j)\,d\xi\Big\|_{L^1_{|x|\le 2^{-j}}}\\
&\lesssim |B(0,2^{-j})| \sup_{|\xi|\sim 2^j}|\widehat{\mathcal{G}}_{11}(\xi)|
\|\varphi(\xi/2^j)\|_{L^1_\xi}\\
&\lesssim  e^{-r_02^{2j}t},
\end{aligned}
\end{equation}
where we used $\|\varphi(\xi/2^j)\|_{L^1_\xi}\sim 2^{dj} \|\varphi\|_{L^1_{\xi}}\lesssim 2^{dj}$ and \eqref{BB-low-high} when $j\leq j_{0}$.

Next, we turn to address $|x|\geq2^{-j}$. Using the identity $e^{ix\cdot\xi}=-\frac{1}{|x|^2}\Delta_{\xi}e^{ix\cdot\xi}$ and integration by parts, we arrive at
\begin{eqnarray}\label{pointwisefff}
\int_{\R^d}e^{ix\cdot\xi}\widehat{\mathcal{G}_{11}}(\xi)\varphi(\xi/2^j)\,d\xi
= |x|^{-2k}\int_{\R^d}e^{ix\cdot\xi}\Delta^{k}_{\xi}\Big(\widehat{\mathcal{G}_{11}}(\xi)\varphi(\xi/2^j)\Big)\,d\xi
\end{eqnarray}
 Now, in light of Lemma \ref{G}, we could continue with (\ref{pointwisefff}) and obtain
\begin{align}\label{pointwise}
 \Big\|\mathcal{F}^{-1}\Big(\widehat{\mathcal{G}_{11}}(\xi)\varphi(\xi/2^j)\Big)\Big\|_{L^1_{|x|\geq2^{-j}}}\nonumber&=
\Big\||x|^{-2k}\int_{\R^d}e^{ix\cdot\xi}\Delta^{k}_{\xi}\Big(\widehat{\mathcal{G}_{11}}(\xi)\varphi(\xi/2^j)\Big)\,d\xi\Big\|_{L^1_{|x|\geq2^{-j}}}\nonumber \\
&\lesssim 2^{-dj}e^{- r_0 2^{2j}t}\|\varphi(\xi/2^j)\|_{L^1_{\xi}}\lesssim e^{- r_02^{2j}t}
\end{align}
where we used $\| |x|^{-2k}\|_{L^1_{|x|\ge 2^{-j}}}\lesssim 2^{(2k-d)j}$ and that the growth in $\xi$
is compensated by derivatives falling on $\varphi(\xi/2^j)$. Consequently, combining \eqref{pointwise1}, \eqref{pointwise}, we have \eqref{kernel}.

Therefore, for $j\leq j_0$, it holds that
\begin{equation}\|\ddj \mathcal{G} (t)f\|_{L^p}\lesssim \sum_{i,j=1,2}\Big\|\mathcal{F}^{-1}\Big(\widehat{\mathcal{G}_{ij}}(\xi)\varphi(\xi/2^j)\Big)\Big\|_{L^1}\|\ddj f\|_{L^p}\lesssim e^{- r_02^{2j}t}\|\ddj f\|_{L^p}\end{equation}
which yields the first inequality in (\ref{BB-low-high}). The high-frequency exponential decay estimate in (\ref{BB-low-high}) can be proved in the same manner. This completes the proof of Proposition~\ref{heat Lp}.

\subsection{Proof of Proposition \ref{linear wellposedness}}

We now prove the linear estimates for \eqref{linearized}.
By Duhamel's formula, we have
\begin{equation*}
(\Lambda^{-1}Ha,u)
=\mathcal{G}(t)(\Lambda^{-1}Ha_0,u_0)
+\int_0^t \mathcal{G}(t-s)(\Lambda^{-1}Hf,g)\,ds .
\end{equation*}
For the homogeneous part, applying Fourier localization and Lemma~\ref{heat Lp},
we obtain for $j\le j_0$,
\begin{equation*}
\|\dot\Delta_j \mathcal{G}(t)(\Lambda^{-1}a_0,u_0)\|_{L^p}
\lesssim e^{-r_0 2^{2j}t}\,
\|\dot\Delta_j(\Lambda^{-1}a_0,u_0)\|_{L^p},
\end{equation*}
where we used the fact that $H(\xi)\sim1$ for $|\xi|\lesssim 2^{j_0}$.
Taking the Besov norm yields
\begin{equation}
\|\mathcal{G}(t)(\Lambda^{-1}a_0,u_0)\|_{\widetilde L_T^{\rho}
(\dot B_{p,r}^{s+\frac{2}{\rho}})}^\ell
\lesssim
\|(\Lambda^{-1}a_0,u_0)\|_{\dot B_{p,r}^{s}}^\ell.
\end{equation}
The source term is handled similarly by Young's inequality, which gives
\begin{equation}
\|(\Lambda^{-1}a,u)\|_{\widetilde L_T^{\rho}
(\dot B_{p,r}^{s+\frac{2}{\rho}})}^\ell
\lesssim
\|(\Lambda^{-1}a_0,u_0)\|_{\dot B_{p,r}^{s}}^\ell
+\|(\Lambda^{-1}f,g)\|_{\widetilde L_T^{\rho_1}
(\dot B_{p,1}^{s-2+\frac{2}{\rho_1}})}^\ell .
\end{equation}
For high frequencies, Lemma~\ref{heat Lp} implies
\begin{equation}\label{M3}
\|(\Lambda^{-1}Ha,u)\|_{\widetilde L_T^{\rho}
(\dot B_{p,r}^{s})}^h
\lesssim
\|(\Lambda^{-1}Ha_0,u_0)\|_{\dot B_{p,r}^{s}}^h
+\|(\Lambda^{-1}Hf,g)\|_{\widetilde L_T^{\rho_1}
(\dot B_{p,1}^{s})}^h .
\end{equation}

Finally, we derive the smoothing effect for the velocity.
Indeed, $u$ satisfies
\begin{equation*}
\partial_t u-\bar{\mathcal A}u
=-\nabla a-\frac{\nabla}{-\Delta}a+g,
\end{equation*}
hence
\[
u=e^{\bar{\mathcal A}t}u_0
-\int_0^t e^{\bar{\mathcal A}(t-s)}
\Big(\nabla a+\frac{\nabla}{-\Delta}a+g\Big)\,ds .
\]
The standard parabolic smoothing estimate for the Lam\'e operator gives
\begin{equation}\label{M4}
\|u\|_{\widetilde L_T^{\rho}
(\dot B_{p,r}^{s+\frac{2}{\rho}})}^h
\lesssim
\|u_0\|_{\dot B_{p,r}^{s}}^h
+\Big\|\nabla a+\frac{\nabla}{-\Delta}a\Big\|_{\widetilde L_T^{\rho_1}
(\dot B_{p,1}^{s-2+\frac{2}{\rho_1}})}^h
+\|g\|_{\widetilde L_T^{\rho_1}
(\dot B_{p,1}^{s-2+\frac{2}{\rho_1}})}^h .
\end{equation}
Since
\[
\nabla a+\frac{\nabla}{-\Delta}a
\sim \nabla a \sim \Lambda^{-1}Ha
\quad \text{in high frequencies},
\]
we have
\[
\Big\|\nabla a+\frac{\nabla}{-\Delta}a\Big\|_{\widetilde L_T^{\rho_1}
(\dot B_{p,1}^{s-2+\frac{2}{\rho_1}})}^h
\lesssim
\|\Lambda^{-1}Ha\|_{\widetilde L_T^{\rho_1}
(\dot B_{p,1}^{s})}^h .
\]
Combining this bound with \eqref{M3} and \eqref{M4} yields
\begin{multline}\label{high estimates}
\|(\Lambda^{-1}Ha,u)\|_{\widetilde L_T^{\rho}
(\dot B_{p,r}^{s})}^h
+\|u\|_{\widetilde L_T^{\rho}
(\dot B_{p,r}^{s+\frac{2}{\rho}})}^h
\lesssim
\|(\Lambda^{-1}Ha_0,u_0)\|_{\dot B_{p,r}^{s}}^h\\
+\|(\Lambda^{-1}Hf,g)\|_{\widetilde L_T^{\rho_1}
(\dot B_{p,1}^{s})}^h
+\|g\|_{\widetilde L_T^{\rho_1}
(\dot B_{p,1}^{s})}^h .
\end{multline}
This completes the proof of Proposition~\ref{linear wellposedness}.



\section{Global well-posedness: Proof of Theorem \ref{thm2.1}}\setcounter{equation}{0}


This section is devoted to the proof of the global well-posedness of $L^p$ solutions to the reformulated Cauchy problem \eqref{linearized:1}.

\subsection{A priori estimate}

We define the energy-dissipation functional
\begin{equation}\label{Xt}
\begin{aligned}
\mathcal{X}_p(t):&=\|a\|_{\widetilde{L}^\infty_t(\dot{B}^{\frac{d}{p}-2}_{p,1})}^{\ell}+\|a\|_{\widetilde{L}^\infty_t(\dot{B}^{\frac{d}{p}}_{p,1})}^h+\|a\|_{ L^1_t(\dot{B}^{\frac{d}{p}}_{p,1})}+\|u\|_{\widetilde{L}^\infty_t(\dot{B}^{\frac{d}{p}-1}_{p,1})}+\|u\|_{ L^1_t(\dot{B}^{\frac{d}{p}+1}_{p,1})}.
\end{aligned}
\end{equation}
The proof of global existence relies on uniform \emph{a priori} estimates stated below. Our method is built upon a genuinely new $L^{p}$ semigroup estimate developed in Section~\ref{section:linear}, 
which goes beyond the classical symmetric hyperbolic structure. 

\begin{prop}\label{prop31}
Let $p\in [1,2d)$, and let $(a,u)$ be the solutions of \eqref{linearized:1} on $[0,T)\times\mathbb{R}^d$. If for any $t\in[0,T)$,  
\begin{align}
\|a\|_{L^{\infty}_t(L^{\infty})}\leq 1\label{3.2}
\end{align}
holds, then we have 
\begin{align}\label{3.3}
\mathcal{X}_p(t)\leq C^* \mathcal{X}_p(0)+C^*\big(1+\mathcal{X}_p(t)\big)\mathcal{X}_p(t)^2.
\end{align}
Here $C^*>0$ is a constant independent of $T$.
\end{prop}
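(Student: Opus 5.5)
We apply Proposition~\ref{linear wellposedness} to \eqref{linearized:1}, written in the form \eqref{linearized} with $f=-\dive(au)$ and $g=g_1+g_2+g_3$. We take $s=\frac{d}{p}-1$ and use the two endpoint pairs $\rho=\infty$ and $\rho=1$, with $\rho_1=1$.

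\textbf{Low frequencies.} Here \eqref{linear low} gives
\[
\|\Lambda^{-1}a\|^{\ell}_{\widetilde L^\infty_t(\dot B^{\frac dp-1}_{p,1})\cap L^1_t(\dot B^{\frac dp+1}_{p,1})}+\|u\|^\ell_{\dots}\lesssim \|(\Lambda^{-1}a_0,u_0)\|^\ell_{\dot B^{\frac dp-1}_{p,1}}+\|(\Lambda^{-1}f,g)\|^\ell_{L^1_t(\dot B^{\frac dp-1}_{p,1})}.
\]
On the left, $\|\Lambda^{-1}a\|^\ell_{\dot B^{d/p-1}_{p,1}}$ equals $\|a\|^\ell_{\dot B^{d/p-2}_{p,1}}$. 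The $L^1_t(\dot B^{d/p}_{p,1})$ bound on $a^\ell$ also follows, since $\|a\|^\ell_{\dot B^{d/p}}=\|\Lambda^{-1}a\|^\ell_{\dot B^{d/p+1}}$.

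\textbf{High frequencies.} Here \eqref{linear high} with $s=\frac dp-1$ controls $a^h$ in $\widetilde L^\infty_t(\dot B^{d/p}_{p,1})\cap L^1_t(\dot B^{d/p}_{p,1})$ and $u^h$ in $\widetilde L^\infty_t(\dot B^{d/p-1}_{p,1})\cap L^1_t(\dot B^{d/p+1}_{p,1})$. The source must then be measured as $\|(\Lambda f,g)\|^h_{L^1_t(\dot B^{d/p-1}_{p,1})}$. The term $\Lambda f$ contains $u\cdot\nabla a$, which loses a derivative at the $\dot B^{d/p}$ level of $a$. So I would not use the Duhamel bound for $a^h$ directly. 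Instead I would redo the classical $L^p$ energy estimate for the damped transport equation in the spirit of Haspot's effective velocity $w=\cQ u+\bar\mu^{-1}\nabla(-\Delta)^{-1}a$. The steps are:
\begin{itemize}
\item Derive the equation for $a$: $\partial_t a+u\cdot\nabla a+\bar\mu^{-1}a=-\dive w+\dots$.
\item Localize with $\ddj$ and estimate the commutators $[u\cdot\nabla,\ddj]a$ by the standard bound $\lesssim \|\nabla u\|_{\dot B^{d/p}_{p,1}}\|a\|_{\dot B^{d/p}_{p,1}}$. This requires $-\frac dp<\frac dp+1$ or $-\min(\frac dp,\frac d{p'})<\frac dp$, which holds for $p<2d$.
\item Recover $w$ and $\cP u$ from heat-type maximal regularity.
\end{itemize}
This combination is the standard part and reproduces \eqref{linear high} with the transport term absorbed.

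\textbf{Nonlinear terms.} We bound all remaining terms by $(1+\mathcal X_p)\mathcal X_p^2$ using $L^p$ product laws in the range $1\le p<2d$. The key law is $\dot B^{d/p}_{p,1}\times\dot B^{d/p-1}_{p,1}\to\dot B^{d/p-1}_{p,1}$, together with the composition estimate $\|F(a)\|_{\dot B^{d/p}_{p,1}}\lesssim\|a\|_{\dot B^{d/p}_{p,1}}$, valid under \eqref{3.2}.

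\emph{The term $f$ at low frequency.} We need $\|\Lambda^{-1}\dive(au)\|^\ell_{L^1(\dot B^{d/p-1})}\lesssim\|au\|_{L^1(\dot B^{d/p-1})}$. This is bounded by $\|a\|_{L^2(\dot B^{d/p-1})}\|u\|_{L^2(\dot B^{d/p})}$, after splitting $a=a^\ell+a^h$.
\begin{itemize}
\item For $a^h$ we use $\|a\|^h_{L^\infty(\dot B^{d/p})}$ or the $L^1$ norm.
\item For $a^\ell$ we interpolate between $L^\infty(\dot B^{d/p-2})$ and $L^1(\dot B^{d/p})$, which gives $a\in L^2(\dot B^{d/p-1})$.
\item For $u$ we interpolate between $L^\infty(\dot B^{d/p-1})$ and $L^1(\dot B^{d/p+1})$, which gives $u\in L^2(\dot B^{d/p})$.
\end{itemize}

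\emph{The terms $g_1,g_2,g_3$.} We use the same $L^2$-in-time interpolation:
\begin{itemize}
\item $g_1=-u\cdot\nabla u$ in $L^1(\dot B^{d/p-1})$ is bounded by $\|u\|_{L^2\dot B^{d/p}}^2$.
\item $g_2$ is bounded by $\|a\|_{L^\infty(\dot B^{d/p})}\|u\|_{L^1(\dot B^{d/p+1})}$ times $(1+\|a\|)$, which explains the factor $1+\mathcal X_p$.
\item $g_3=G(a)\nabla a$ is bounded by $\|a\|_{L^2\dot B^{d/p}}\|a\|_{L^2\dot B^{d/p}}$.
\end{itemize}
For $g_3$ we need $L^2_t(\dot B^{d/p}_{p,1})$ for $a$, which follows from the $\widetilde L^\infty(\dot B^{d/p})$ and $L^1(\dot B^{d/p})$ norms on the high part, and from interpolation on the low part (which in fact gives more).

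\textbf{Main obstacle.} The step I expect to be hardest is the product estimates at the low regularity $\frac dp-1$ when $p>d$, where this index is negative. Products such as $au$ require $\frac dp+\frac dp-1>d\max(0,\frac2p-1)$, i.e.\ $p<2d$. This is exactly where the sharp range enters, so each term must be checked against that constraint.
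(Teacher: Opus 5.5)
Your proposal is correct and follows essentially the same route as the paper: low frequencies via \eqref{linear low} with $s=\frac dp-1$, high frequencies via the Hoff--Haspot effective velocity and a localized $L^p$ energy estimate for the damped transport equation with commutator bounds ($j_0$ taken large to absorb the linear coupling terms), and the nonlinear terms via the $L^p$ product and composition laws available for $1\le p<2d$. The differences are cosmetic. Your $w$ omits the $\Lambda^{-4}$ part of the paper's effective velocity, which only leaves a harmless lower-order source at high frequencies. You also pair several products as $\widetilde L^2_t\times\widetilde L^2_t$, whereas the paper uses $L^1_t\times\widetilde L^\infty_t$.
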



\begin{proof}
We divide the proof of Proposition \ref{prop31} into the following three steps.

\begin{itemize}
\item {\emph{Step 1: Linear $L^p$ estimates in high frequencies.}}
\end{itemize}

Recall the incompressible--compressible decomposition $u=\cP u+\cQ u$.
The incompressible part $\cP u$ satisfies
\[
\left\{
\begin{aligned}
&\partial_t\cP u- \bar\mu_1\Delta\cP u=\cP g,\\
&\cP u|_{t=0}=\cP u_{0}.
\end{aligned}
\right.
\]
Hence, the optimal smoothing effect in Lemma~\ref{lemma6.1} yields
\begin{equation}\label{est:Pu}
\|\cP u\|_{\widetilde{L}^{\infty}_{t}(\dot{B}^{\frac{d}{p}-1}_{p,1})\cap L^{1}_{t}(\dot{B}^{\frac{d}{p}+1}_{p,1})}^h
\lesssim
\|\cP u_{0}\|_{\dot{B}^{\frac{d}{p}-1}_{p,1}}^h
+\|\cP g\|_{L^{1}_{t}(\dot{B}^{\frac{d}{p}-1}_{p,1})}^h.
\end{equation}

The compressible subsystem, consisting of the compressible part $v=\cQ u$ and the density fluctuation $a=\rho-1$, satisfies the coupled system
\begin{equation}\label{linearized:111}
\left\{
\begin{aligned}
&\partial_{t}a+u\cdot\nabla a+\dive v=-a\,\dive u ,\\
&\partial_{t}v-\bar\mu\Delta v+\nabla a+\nabla \Lambda^{-2} a= \cQ g,\\
&(a,v)|_{t=0}=(a_{0},v_{0}).
\end{aligned}
\right.
\end{equation}
with $\bar\mu=2\bar\mu_1+\bar\mu_2$.

Note that the Fourier symbol of the linear operator associated with \eqref{linearized:111} has purely real eigenvalues. This hyperbolic-parabolic structure enables an energy decoupling argument after choosing a suitable frequency threshold $j_0$.
Following Hoff~\cite{hoff97} and Haspot~\cite{haspot1}, one look a  effective velocity  such that $-\bar\mu\Delta v+\nabla a+\nabla \Lambda^{-2} a=\bar\mu\Lambda^{2}w$, i.e.,
\begin{equation}\label{def:w}
w
=v+\frac{1}{\bar\mu}\nabla \Lambda^{-2}\bigl(a+\Lambda^{-2}a\bigr).
\end{equation}
Substituting $v=w-\frac{1}{\bar\mu}\nabla \Lambda^{-2}\bigl(a+\Lambda^{-2}a\bigr)$ into the first equation of \eqref{linearized:111} yields the damped-type equation
\begin{equation}\label{eq:a-damped}
\partial_t a+u\cdot\nabla a+\frac{1}{\bar\mu}\,a
=
-\frac{1}{\bar\mu}\,\Lambda^{-2}a-\dive w-a\,\dive u .
\end{equation}
Performing the localized $L^p$ energy estimate for \eqref{eq:a-damped}, we have
\begin{align*}
&\quad\|\dot{\Delta}_ja\|_{L^{\infty}_t(L^p)}+\frac{1}{\bar\mu}\|\dot{\Delta}_ja\|_{L^{1}_t(L^p)}\\
&\leq \|\dot{\Delta}_ja_0\|_{L^p}+\int_0^t \Big(\frac{1}{p}\|\dive u\|_{L^\infty}\|\dot{\Delta}_ja\|_{L^p}+\frac{1}{\bar\mu}\|\Lambda^{-2}\dot{\Delta}_j a\|_{L^p}+\|\dive \dot{\Delta}_j w\|_{L^p}+\|\dot{\Delta}_j(a\dive u)\|_{L^p}\Big)\,d\tau.
\end{align*}
Thus, the classical commutator estimate, together with the embedding $\dot{B}^{\frac{d}{p}}_{p,1}\hookrightarrow L^\infty$ and Bernstein's inequality, indicates that
\begin{equation}\label{48}
\begin{aligned}
\|a\|_{\widetilde{L}^{\infty}_{t}(\dot{B}^{\frac{d}{p}}_{p,1})}^{h}
+\frac{1}{\bar\mu}\|a\|_{L^{1}_{t}(\dot{B}^{\frac{d}{p}}_{p,1})}^{h}
&\leq
\|a_{0}\|_{\dot{B}^{\frac{d}{p}}_{p,1}}^{h}
+\|w\|_{L^{1}_{t}(\dot{B}^{\frac{d}{p}+1}_{p,1})}^{h}
+C_1 2^{-2j_0}\|a\|_{L^{1}_{t}(\dot{B}^{\frac{d}{p}}_{p,1})}^{h}\\
&\quad
+C_1\|\nabla u\|_{L^1_t(\dot{B}^{\frac{d}{p}}_{p,1})}\,
\|a\|_{\widetilde{L}^{\infty}_{t}(\dot{B}^{\frac{d}{p}}_{p,1})}^{h}
+\|a\,\dive u\|_{L^{1}_{t}(\dot{B}^{\frac{d}{p}}_{p,1})}^{h}.
\end{aligned}
\end{equation}

On the other hand, differentiating \eqref{def:w} in time and using the second equation of \eqref{linearized:1}, we infer that $w$ satisfies
\begin{equation}\label{eq:w-parabolic}
\partial_t w-\bar\mu\Delta w
= \cQ g
+\frac{1}{\bar\mu}\bigl(\nabla\Lambda^{-2}+\nabla\Lambda^{-4}\bigr)
\Bigl(-\div w-\div(a u)+\frac{1}{\bar\mu}\,a+\frac{1}{\bar\mu}\,\Lambda^{-2}a
\Bigr).
\end{equation}

Then, applying Lemma~\ref{lemma6.1} yields
\begin{equation}\label{410}
\begin{aligned}
\|w\|_{\widetilde{L}^{\infty}_{t}(\dot{B}^{\frac{d}{p}-1}_{p,1})}^{h}
+c\|w\|_{L^{1}_{t}(\dot{B}^{\frac{d}{p}+1}_{p,1})}^{h}
&\leq
\|w_{0}\|_{\dot{B}^{\frac{d}{p}-1}_{p,1}}^{h}
+C_2 2^{-2j_0}(1+2^{-2j_0})
\|w\|_{L^{1}_{t}(\dot{B}^{\frac{d}{p}+1}_{p,1})}^{h}\\
&\quad
+C_2 2^{-j_{0}}(1+2^{-4j_{0}})
\|a\|_{L^{1}_{t}(\dot{B}^{\frac{d}{p}}_{p,1})}^{h}\\
&\quad
+C_2 (2^{-j_0}+2^{-3j_0})\|a u\|_{L^{1}_{t}(\dot{B}^{\frac{d}{p}}_{p,1})}
+\|g\|_{L^{1}_{t}(\dot{B}^{\frac{d}{p}-1}_{p,1})}^{h}.
\end{aligned}
\end{equation}

Let us take $\frac{c}{2}\times \eqref{48}+\eqref{410}$ and choose $j_0$ large enough so that
\begin{equation}\label{choose:j0}
C_1 2^{-2j_0}\leq \frac{1}{16},
\qquad
C_2 2^{-2j_0}(1+2^{-2j_0})\leq \frac{c}{4},
\qquad
C_2 2^{-j_{0}}(1+2^{-4j_{0}})\leq \frac{c}{16\bar\mu}.
\end{equation}
Consequently, combining the above estimates with \eqref{est:Pu}, using the relation $
v=w-\frac{1}{2}\nabla \Lambda^{-2}\bigl(a+\Lambda^{-2}a\bigr)$  to recover $v$ from the bounds of $(a,w)$, we end up with
\begin{equation}\label{high:es}
\begin{aligned}
&\quad \|a\|_{\widetilde{L}^{\infty}_{t}(\dot{B}^{\frac{d}{p}}_{p,1})\cap L^{1}_{t}(\dot{B}^{\frac{d}{p}}_{p,1})}^{h}
+\|v\|_{\widetilde{L}^{\infty}_{t}(\dot{B}^{\frac{d}{p}-1}_{p,1})\cap L^{1}_{t}(\dot{B}^{\frac{d}{p}+1}_{p,1})}^{h}\\
&\lesssim
\|a_{0}\|_{\dot{B}^{\frac{d}{p}}_{p,1}}^{h}
+\|v_{0}\|_{\dot{B}^{\frac{d}{p}-1}_{p,1}}^{h}
+\| u\|_{L^1_t(\dot{B}^{\frac{d}{p}+1}_{p,1})}
\|a\|_{\widetilde{L}^{\infty}_{t}(\dot{B}^{\frac{d}{p}}_{p,1})}^{h}
+\|a\,\dive u\|_{L^{1}_{t}(\dot{B}^{\frac{d}{p}}_{p,1})}^{h}\\
&\quad
+\|a u\|_{L^{1}_{t}(\dot{B}^{\frac{d}{p}}_{p,1})}^h
+\|g\|_{L^{1}_{t}(\dot{B}^{\frac{d}{p}-1}_{p,1})}^{h}.
\end{aligned}
\end{equation}

\begin{itemize}
\item {\emph{Step 2: Linear $L^p$ estimates in low frequencies.}}
\end{itemize}

We are now in a position to establish $L^p$ estimates in low frequencies, which
constitutes a substantial improvement compared with the classical $L^2$
framework in the literature. The frequency threshold $j_0$
has already been fixed in Step~1. Applying \eqref{linear low} in Proposition \ref{linear wellposedness} with $s=\frac{d}{p}-1$ directly yields
\begin{equation}\label{Lp-lowfreq}
\|(\Lambda^{-1}a,u)\|_{\widetilde{L}_{t}^{\infty}(\dot{B}_{p,1}^{\frac{d}{p}-1})\cap L_{t}^{1}(\dot{B}_{p,1}^{\frac{d}{p}+1})}^{\ell}
\lesssim
\|(\Lambda^{-1}a_0,u_0)\|^\ell_{\dot{B}_{p,1}^{\frac{d}{p}-1}}
+\|au\|_{\widetilde{L}_{t}^{1}(\dot{B}_{p,1}^{\frac{d}{p}-1})}+ \|g\|^\ell_{\widetilde{L}_{t}^{1}(\dot{B}_{p,1}^{\frac{d}{p}-1})}.
\end{equation}

\begin{itemize}
\item \emph{Step 3: Nonlinear estimates.}
\end{itemize}

We consider nonlinear estimates in \eqref{high:es} and \eqref{Lp-lowfreq}.
In light of the second product law in Proposition \ref{prop3.2}, we have for $p\in[1,2d)$ that
\begin{eqnarray*}
 \|au\|_{L^{1}_{t}(\dot{B}^{\frac{d}{p}-1}_{p,1})}
\lesssim\|a\|_{L^{1}_{t}(\dot{B}^{\frac{d}{p}}_{p,1})}
\|u\|_{\widetilde{L}^{\infty}_{t}(\dot{B}^{\frac{d}{p}-1}_{p,1})}\lesssim \mathcal{X}^2_p(t),
\end{eqnarray*}
and
\begin{align*}
\|au\|_{L^{1}_{t}(\dot{B}^{\frac{d}{p}}_{p,1})}\lesssim\|a\|_{\widetilde{L}^{2}_{t}(\dot{B}^{\frac{d}{p}}_{p,1})}
\| u\|_{\widetilde{L}^{2}_{t}(\dot{B}^{\frac{d}{p}}_{p,1})}\lesssim \mathcal{X}^2_p(t),
\end{align*}
where we used the interpolation inequalities
\begin{align*}
\|a\|_{\widetilde{L}^{2}_{t}(\dot{B}^{\frac{d}{p}}_{p,1})}\lesssim \|a\|_{\widetilde{L}^{\infty}_{t}(\dot{B}^{\frac{d}{p}}_{p,1})}^{\frac{1}{2}} \|a\|_{L^{1}_{t}(\dot{B}^{\frac{d}{p}}_{p,1})}^{\frac{1}{2}}\quad \text{and}\quad \| u\|_{\widetilde{L}^{2}_{t}(\dot{B}^{\frac{d}{p}}_{p,1})}\lesssim \| u\|_{\widetilde{L}^{\infty}_{t}(\dot{B}^{\frac{d}{p}-1}_{p,1})}^{\frac{1}{2}} \| u\|_{L^{1}_{t}(\dot{B}^{\frac{d}{p}+1}_{p,1})}^{\frac{1}{2}}.
\end{align*}
Similarly, one has
\begin{eqnarray*}
\|a\mathrm{div}u\|^h_{L^{1}_{t}(\dot{B}^{\frac{d}{p}}_{p,1})}
\lesssim\|a\|_{\widetilde{L}^{\infty}_{t}(\dot{B}^{\frac{d}{p}}_{p,1})}
\|\mathrm{div}u\|_{L^{1}_{t}(\dot{B}^{\frac{d}{p}}_{p,1})}\lesssim \mathcal{X}^2_p(t).
\end{eqnarray*}

We now deal with the nonlinear term involving $g=g_1+g_2+g_3$. One analyzes $g_1$ as follows: 
\begin{eqnarray*}
\|g_1\|_{L^{1}_{t}(\dot{B}^{\frac{d}{p}-1}_{p,1})}
\lesssim\|u\|_{\widetilde{L}^{\infty}_{t}(\dot{B}^{\frac{d}{p}-1}_{p,1})}
\|\nabla u\|_{L^{1}_{t}(\dot{B}^{\frac{d}{p}}_{p,1})}\lesssim \mathcal{X}^2_p(t).
\end{eqnarray*}
Concerning $g_2$ and $g_3$, by employing Proposition \ref{prop3.2}, \eqref{3.2} and the continuity of composite functions in Proposition \ref{prop2.25} to verify
\begin{align*}
\|g_2\|_{L^{1}_{t}(\dot{B}^{\frac{d}{p}-1}_{p,1})}
&\lesssim(1+\|\tilde{Q}(a)\|_{\tilde{L}^{\infty}_{t}(\dot{B}^{\frac{d}{p}}_{p,1})})\|(\tilde{\mu}_1(a),\tilde{\mu}_2(a))\|_{\tilde{L}^{\infty}_{t}(\dot{B}^{\frac{d}{p}}_{p,1})}
\|\nabla u\|_{L^{1}_{t}(\dot{B}^{\frac{d}{p}}_{p,1})}\\
&\lesssim (1+\|a\|_{\widetilde{L}^{\infty}_{t}(\dot{B}^{\frac{d}{p}}_{p,1})}) \|a\|_{\widetilde{L}^{\infty}_{t}(\dot{B}^{\frac{d}{p}}_{p,1})} \|u\|_{L^{1}_{t}(\dot{B}^{\frac{d}{p}+1}_{p,1})}\lesssim (1+\mathcal{X}^2_p(t))\mathcal{X}^2_p(t)
\end{align*}
and
\begin{eqnarray*}
\|g_{3}\|^\ell_{{L}^{1}_{t}(\dot{B}^{\frac{d}{p}-1}_{p,1})}
\lesssim\|G(a)\|_{\widetilde{L}^{\infty}_{t}(\dot{B}^{\frac{d}{p}}_{p,1})}
\|\nabla a\|_{L^{1}_{t}(\dot{B}^{\frac{d}{p}-1}_{p,1})}\lesssim \|a\|_{\widetilde{L}^{\infty}_{t}(\dot{B}^{\frac{d}{p}}_{p,1})} \|a\|_{L^{1}_{t}(\dot{B}^{\frac{d}{p}-1}_{p,1})}\lesssim  \mathcal{X}^2_p(t).
\end{eqnarray*}

Finally, substituting the above nonlinear estimates into \eqref{high:es} and
\eqref{Lp-lowfreq}, we obtain \eqref{3.3} and complete the proof of
Proposition~\ref{prop31}.
\end{proof}

\vspace{2mm}
\subsection{Proof of global well-posedness}

Before proving global existence, let us state the local well-posedness result in a $L^p$ framework.

\begin{thm}\label{thm3.1}Let $d\geq2$ and $ p\in[1,2d)$. Suppose that $(\rho_0,u_0)$ with $\rho_0=1+a_0$ satisfying
$$
\inf\limits_{x\in\mathbb{R}^d}\rho_0(x)>0,\quad \quad  a_0 \in\dot{B}_{p,1}^{\frac dp-2}\cap\dot{B}_{p,1}^{\frac dp} \quad\text{and}\quad u_{0}\in \dot{B}_{p,1}^{\frac dp-1}.
$$
In the case $p=1$, additionally assume that
$$
\|a_0\|_{\dot{B}_{p,1}^{\frac dp}}\leq \varepsilon\ll1.
$$
Then,  there exists a time $T>0$ such that the Cauchy problem \eqref{1.1}-\eqref{1.2} admits a unique solution $(\rho,u)$ with $\rho=1+a$ on $[0,T]\times\mathbb{R}^d$, which satisfies
$$
\inf_{(t,x)\in [0,T]\times \mathbb{R}^d}\rho(t,x)>0,\quad a\in\mathcal{C}([0,T];\dot{B}_{p,1}^{\frac dp-2}\cap\dot{B}_{p,1}^{\frac dp}),$$
and
$$u\in\mathcal{C}([0,T];\dot{B}_{p,1}^{\frac dp-1})\cap  L^1(0,T;\dot{B}_{p,1}^{\frac dp+1}).
$$

\end{thm}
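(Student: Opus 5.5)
The plan is to prove Theorem~\ref{thm3.1} by the classical Lagrangian/iterative scheme for critical-regularity local theory (as in Danchin and in \cite{CL}), treating the Poisson term as a lower-order perturbation. On a short interval the large data can no longer be absorbed through smallness. We therefore linearize around the free heat/Lamé flow of $u_0$ and work with
\[
u_L:=e^{t\bar{\mathcal A}}u_0 ,
\]
whose norm $\|u_L\|_{L^1_T(\dot B^{d/p+1}_{p,1})}$ tends to $0$ as $T\to0$ by dominated convergence on the dyadic blocks. The density is handled by a transport-type estimate.

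First we build approximate solutions $(a^n,u^n)$ by Friedrichs spectral truncation, or equivalently by solving the linearized system
\[
\partial_t a^{n+1}+u^n\cdot\nabla a^{n+1}=-(1+a^n)\dive u^n ,
\]
\[
\partial_t u^{n+1}-\frac{1}{1+a^n}\mathcal A_{a^n}u^{n+1}=-u^n\cdot\nabla u^n-\nabla(\cdots)+\nabla\Lambda^{-2}a^n .
\]
Uniform bounds on $[0,T]$ come from three ingredients. The first is transport estimates in $\dot B^{d/p}_{p,1}$ for $a^{n+1}$. Since $\dot B^{d/p}_{p,1}$ is an algebra for $p<2d$, this gives
\[
\|a^{n+1}\|_{L^\infty_T(\dot B^{d/p}_{p,1})}\le e^{CV}\big(\|a_0\|+V\big),\qquad V=\|u^n\|_{L^1_T(\dot B^{d/p+1}_{p,1})}.
\]
The $\dot B^{d/p-2}_{p,1}$ part is propagated using the conservative form $\partial_t a+\dive(\rho u)=0$ and the product law $\dot B^{d/p}_{p,1}\times\dot B^{d/p-1}_{p,1}\to\dot B^{d/p-1}_{p,1}$. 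The second is maximal regularity for the Lamé operator with variable coefficients. Its coefficients lie in $L^\infty_T(\dot B^{d/p}_{p,1})$, and we obtain it by freezing coefficients via a smooth approximation $S_m a_0$, leaving a small remainder $a-S_ma_0$. The third is that the electrostatic term is harmless: $\|\nabla\Lambda^{-2}a\|_{L^1_T(\dot B^{d/p+1}_{p,1})}\le T\|a\|_{L^\infty_T(\dot B^{d/p}_{p,1})}$ up to one derivative of bookkeeping, so it contributes a factor $T$ only. Writing $u^n=u_L+\tilde u^n$ and choosing $T$ small then gives a uniform bound on $\tilde u^n$ in $L^\infty_T(\dot B^{d/p-1}_{p,1})\cap L^1_T(\dot B^{d/p+1}_{p,1})$. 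Positivity of $\rho$ follows from transport along the flow, since $\int_0^T\|\dive u\|_{L^\infty}$ is small.

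Next comes convergence and uniqueness. We estimate differences $(\delta a,\delta u)$ in the lower norm $L^\infty_T(\dot B^{d/p-1}_{p,1})\times\big(L^\infty_T(\dot B^{d/p-2}_{p,1})\cap L^1_T(\dot B^{d/p}_{p,1})\big)$. Transporting $\delta a$ at regularity $d/p-1$ loses a derivative and requires the product law $\dot B^{d/p-1}_{p,1}\times\dot B^{d/p}_{p,1}\to\dot B^{d/p-1}_{p,1}$, valid exactly when $d/p-1>-d/p$, i.e.\ $p<2d$. For $p=1$ the endpoint term $\nabla\delta a\cdot u$ needs the shifted product estimate, which is where the smallness $\|a_0\|_{\dot B^{d}_{1,1}}\le\varepsilon$ enters. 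There it lets us close the variable-coefficient Lamé estimate without frequency-dependent coefficient freezing. The resulting contraction on a small interval gives a Cauchy sequence. Time continuity then follows from the equations, and uniqueness follows from the same difference estimate.

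The main obstacle is the stability (difference) estimate near the endpoints: at $p$ close to $2d$ the product laws are critical, and for $p=1$ the regularity $d/p-2=d-2$ combined with the loss of one derivative in the transport of $\delta a$ is delicate. We would first attempt a Lagrangian reformulation, which removes the convection term $u\cdot\nabla a$ and makes the density $a_0\,J^{-1}$ explicit. This avoids the derivative loss and reduces everything to product and composition estimates in $\dot B^{d/p}_{p,1}$, where the Poisson term, being of order $-1$ relative to $a$, is again perturbative.
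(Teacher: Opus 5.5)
\textbf{There is a genuine gap in your convergence/uniqueness step.} Your existence scheme is reasonable: the free Lam\'e flow $u_L$, transport bounds for $a$ in $\dot B^{d/p}_{p,1}$, the conservative form to propagate $\dot B^{d/p-2}_{p,1}$, and smallness of $a_0$ when $p=1$. But the difference estimate fails on a large part of the range, and for a different reason than the one you give.

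The transport of $\delta a$ in $\dot B^{d/p-1}_{p,1}$ is indeed harmless for all $p<2d$. The obstruction is in the velocity equation. To get $\delta u\in\widetilde L^\infty_T(\dot B^{d/p-2}_{p,1})\cap L^1_T(\dot B^{d/p}_{p,1})$, the source must lie in $L^1_T(\dot B^{d/p-2}_{p,1})$. The difference of the factor $\frac{1}{1+a}$ in front of $\bar{\mathcal A}u$ produces $\delta a\,\nabla^2u_2$, and convection produces $\delta u\cdot\nabla u_2$. Each is a product of total regularity $2\frac dp-2$, e.g.\ $\dot B^{d/p-1}_{p,1}\times\dot B^{d/p-1}_{p,1}\to\dot B^{d/p-2}_{p,1}$, with both factors of negative regularity once $p>d$. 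By Proposition~\ref{prop3.2} this needs $2(\frac dp-1)>d\max\{0,\frac2p-1\}$, that is, $d\ge3$ and $p<d$.

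So your contraction, and with it both the convergence of the iterates and uniqueness, only works for $1\le p<d$ with $d\ge3$. Existence could be rescued by compactness; uniqueness cannot be obtained this way.
\begin{itemize}
\item It fails for every $p\in[d,2d)$. This is exactly the range where $u_0$ has negative regularity, which the paper stresses.
\item For $d=2$ it is borderline when $p<2$, requiring a logarithmic interpolation argument as the paper indicates for $p=1$, and false when $p\ge2$.
\end{itemize}
This is the classical limitation of Eulerian stability estimates in critical spaces, not an endpoint effect near $p=2d$.

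\textbf{The Lagrangian route is the right remedy, but it must carry the whole argument.} For $d\le p<2d$ it has to run the entire fixed-point argument, not just patch endpoints. In Lagrangian coordinates the electrostatic term is no longer a Fourier multiplier. With $A=(D_yX)^{-1}$ and $J=\det D_yX$, the potential solves $\dive_y(JAA^{\top}\nabla_y\bar\phi)=\rho_0-J$. You would have to prove its Lipschitz dependence on the flow in $\dot B^{d/p-1}_{p,1}$, low frequencies included; calling it ``order $-1$'' does not settle this.

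\textbf{How the paper avoids this.} For $1<p<2d$ it imports the local well-posedness of Chikami--Ogawa (data $a_0\in\dot B^{d/p-1}_{p,1}\cap\dot B^{d/p}_{p,1}$, $u_0\in\dot B^{d/p}_{p,1}+\dot B^{d/p-1}_{p,1}$). It then only proves propagation of the extra low-frequency regularity, via $a=a_0-\int_0^t\dive((1+a)u)\,d\tau$, maximal regularity for $u$, and Gr\"onwall. Only for $p=1<d$ does it run an Eulerian scheme of your type.

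\textbf{Two smaller corrections.}
\begin{itemize}
\item The Poisson bound you need is $\|\nabla\Lambda^{-2}a\|_{L^1_T(\dot B^{d/p-1}_{p,1})}\le T\|a\|_{L^\infty_T(\dot B^{d/p-2}_{p,1})}$. Your version is off by two derivatives, and this bound is precisely why the $\dot B^{d/p-2}_{p,1}$ component of $a$ must be propagated.
\item For $p=1$, the smallness of $\|a_0\|_{\dot B^{d}_{1,1}}$ is used to absorb the variable-coefficient part $g_2$ of the viscous term, and in uniqueness the term $\|\rho_2-1\|_{\dot B^d_{1,1}}\|\delta u\|_{\dot B^{d+1}_{1,1}}$. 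It is not used to handle $\nabla\delta a\cdot u$.
\end{itemize}
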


We postpone the proof of local well-posedness in the Appendix.

\vspace{2mm}
\noindent
\textbf{Proof of Theorem \ref{thm2.1}.} Under the smallness assumption \eqref{a1} (with $\varepsilon_0\leq \varepsilon$), Theorem~\ref{thm3.1} ensures that the existence and uniqueness of a solution $(\rho,u)$ with $\rho=1+a$ on $[0,T]\times\mathbb{R}^d$ hold for a small time $T$ such that
\[
\mathcal{X}_{p}(T)\leq 2C^*\mathcal{X}_{p}(0).
\]
By the embedding $\dot{B}^{\frac{d}{p}}_{p,1}\hookrightarrow L^{\infty}$, the condition \eqref{3.2} is fulfilled provided that $\mathcal{X}_{p}(0)\leq \varepsilon_0\leq \varepsilon^*$ for some constant $\varepsilon^*>0$. Therefore, we may apply the {\emph{a priori}} estimate in Proposition~\ref{prop31} to deduce that
\begin{align*}
\mathcal{X}_{p}(T)
&\leq C^*\mathcal{X}_{p}(0)
+C^*\bigl(1+2C^*\mathcal{X}_{p}(0)\bigr)\,4(C^*)^2\mathcal{X}_{p}(0)^2
\leq \frac{3}{2}C^*\,\mathcal{X}_{p}(0),
\end{align*}
as long as
\[
\mathcal{X}_{p}(0)\leq \varepsilon_0\leq
\min\left\{\varepsilon^*,\,\frac{1}{2C^*},\,\frac{1}{16(C^*)^2}\right\}.
\]
Hence, applying the local well-posedness result again with initial data
$(\rho(T),u(T))$, we can extend the solution to $[0,T+T_1]$ for some $T_1>0$. Repeating the above procedure
step by step shows that the solution can be prolonged globally in time. The proof of Theorem \ref{thm2.1} is thus completed.


\section{Asymptotic behavior: Proof of Theorem \ref{Theorem2.2}}

In this section, we investigate the optimal decay estimates of the fluctuation of $L^p$ solutions.

\subsection{Uniform evolution of the low-frequency norm}

The decay rates rely on the following Proposition, which concerns a uniform propagation of the low-frequency
$\dot{B}^{\sigma_1}_{p,\infty}$ norm:

\begin{prop}\label{lemma:evolution:gamma>0}

Let $-1-\min\{\frac{d}{p},\frac{d}{p'}\}\leq \sigma_1<\frac{d}{p}-1$.  If $(a,m)$ with $m=(1+a)u$ is the solution to \eqref{re:momentum} satisfying $\mathcal{X}_p(t)<\infty$, then we have
\begin{equation}\label{4.55}
\begin{aligned}
\|(\Lambda^{-1} a,m)^{\ell}\|_{L^{\infty}_t(\dot{B}^{\sigma_1}_{p,\infty})}\lesssim e^{(1+\mathcal{X}_{p}(t))^2\mathcal{X}_{p}(t)}\Big(\|(\Lambda^{-1} a_0, m_0)^{\ell}\|_{\dot{B}^{\sigma_1}_{p,\infty}}+(1+\mathcal{X}_{p}(t))^5\mathcal{X}_{p}(t)\Big).
\end{aligned}
\end{equation}
\end{prop}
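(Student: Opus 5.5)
We propose to work with the momentum formulation \eqref{re:momentum}. Its nonlinearity is in divergence form, $\dive\mathcal N$, and the mass equation has no source. The plan is to apply the low-frequency semigroup bound of Proposition~\ref{heat Lp} to the Duhamel formula \eqref{Green}, with $f=0$ and $g=\dive\mathcal N$.

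\textbf{Linear estimate.} For $j\le j_0$, Proposition~\ref{heat Lp} gives
\[
\|\ddj(\Lambda^{-1}a,m)(t)\|_{L^p}\lesssim e^{-r_02^{2j}t}\|\ddj(\Lambda^{-1}a_0,m_0)\|_{L^p}+\int_0^t e^{-r_02^{2j}(t-\tau)}2^{j}\|\ddj\mathcal N(\tau)\|_{L^p}\,d\tau .
\]
Multiplying by $2^{j\sigma_1}$ and taking the supremum over $j\le j_0$ gives two options for the Duhamel term. The first is the crude bound
\[
\|(\Lambda^{-1}a,m)^\ell\|_{L^\infty_t(\dot B^{\sigma_1}_{p,\infty})}\lesssim\|(\Lambda^{-1}a_0,m_0)^\ell\|_{\dot B^{\sigma_1}_{p,\infty}}+\|\mathcal N\|^\ell_{L^1_t(\dot B^{\sigma_1+1}_{p,\infty})}.
\]
The second uses the heat gain $\sup_j\int_0^t 2^{2j}e^{-c2^{2j}(t-\tau)}\,d\tau\lesssim 1$, which allows $\mathcal N$ to be placed in $\widetilde L^\infty_t(\dot B^{\sigma_1-1}_{p,\infty})$, or in an interpolated $\widetilde L^{r}_t(\dot B^{\sigma_1-1+2/r}_{p,\infty})$. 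This second option is where the gain of one derivative matters: the lowest index $\sigma_1=-1-\min\{d/p,d/p'\}$ corresponds to $\mathcal N\in\dot B^{-\min\{d/p,d/p'\}}_{p,\infty}$. That space is exactly the endpoint target of the product law $\dot B^{d/p}_{p,1}\times\dot B^{s}_{p,\infty}\to\dot B^{s}_{p,\infty}$ (Proposition~\ref{prop3.2}), valid for $-\min\{d/p,d/p'\}\le s\le d/p$, and also of products of two $\dot B^{s}$-type terms with the sum of indices positive.

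\textbf{Nonlinear terms.} We next bound each term of \eqref{N} in the appropriate norm by $\mathcal X_p$ and by the quantity being estimated,
\[
Y(t):=\|(\Lambda^{-1}a,m)^\ell\|_{L^\infty_t(\dot B^{\sigma_1}_{p,\infty})}.
\]
\begin{itemize}
\item The terms $H(a)$, $|\nabla\Lambda^{-2}a|^2$ and the viscous terms $\tilde\mu_i(a)\nabla m$, $D(Q(a)m)$ are at least quadratic, with one factor in $\dot B^{d/p}_{p,1}$, namely $a$ or $\nabla m$ in $L^1_t$. We apply the endpoint product law with the other factor in $\dot B^{\sigma_1+1}_{p,\infty}$ or lower. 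Composition (Proposition~\ref{prop2.25}) gives the factors $(1+\mathcal X_p)^k$.
\item The electrostatic quadratic terms $\nabla\Lambda^{-2}a\otimes\nabla\Lambda^{-2}a$ involve $\nabla\Lambda^{-2}a^\ell\sim\Lambda^{-1}a$. We split $a=a^\ell+a^h$. The low part is controlled by the product of $Y$ with $\|\nabla\Lambda^{-2}a\|_{L^1_t(\dot B^{d/p}_{p,1})}$, which is bounded by $\mathcal X_p$ after interpolation between $\dot B^{d/p-2}$ and the $L^1_t\dot B^{d/p}$ dissipation.
\item For $u\otimes u$ we write $u=m-Q(a)m/(1+a)$, or simply use $u\in L^2_t\dot B^{d/p}_{p,1}$ together with one factor of $u$ in $\dot B^{\sigma_1+1}$ via its low/high splitting and the bound $\|u^\ell\|_{\dot B^{\sigma_1}}\lesssim Y+\mathcal X_p^2$.
\item High-frequency parts are handled trivially: on low frequencies $\dot B^{d/p-1}_{p,1}\hookrightarrow\dot B^{\sigma_1+1}_{p,\infty}$ up to $2^{j_0}$ factors.
\end{itemize}
This should yield
\[
Y(t)\lesssim \|(\Lambda^{-1}a_0,m_0)^\ell\|_{\dot B^{\sigma_1}_{p,\infty}}+(1+\mathcal X_p)^5\mathcal X_p+\int_0^t F(\tau)\,Y(\tau)\,d\tau,
\]
where $\int_0^tF\lesssim(1+\mathcal X_p)^2\mathcal X_p$. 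For instance, $F=\|a\|_{\dot B^{d/p}}+\|u\|_{\dot B^{d/p+1}}+\|u\|^2_{\dot B^{d/p}}$. To keep the linear term in $Y$ inside a time integral, we use the crude $L^1_t$ Duhamel form for those terms, i.e.\ pointwise in $\tau$. Gr\"onwall's lemma then gives \eqref{4.55}.

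\textbf{Main obstacle.} The hard step is the range $\sigma_0-1\le\sigma_1<\sigma_0$. There the crude $L^1_t(\dot B^{\sigma_1+1}_{p,\infty})$ target lies below the product-law threshold, so the terms linear in $Y$ cannot be treated pointwise in time. We would first try to peel off one factor at the top regularity, e.g.\ $\|a\|_{\dot B^{d/p}_{p,1}}\in L^1_t$, and use the heat gain on only part of the derivative, placing $\mathcal N$ in $L^1_t\dot B^{\sigma_1+1}$ only for the quadratic terms whose lower factor is $\Lambda^{-1}a$ or $m$ at index $\ge\sigma_1+1\ge\sigma_0$. The remaining terms not linear in $Y$ would be bounded purely by $\mathcal X_p^2$ through the heat gain. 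The conversion between $u$ and $m$ at this regularity is the delicate point.
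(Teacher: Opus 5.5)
\textbf{There is a genuine gap: the range $\sigma_0-1\le\sigma_1<\sigma_0$ is left open, and your diagnosis of it is off.} Here $\sigma_0=-\min\{\frac{d}{p},\frac{d}{p'}\}$.

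Since $\sigma_1+1\ge\sigma_0$, the crude target $L^1_t(\dot{B}^{\sigma_1+1}_{p,\infty})$ for $\mathcal{N}$ is admissible for the product laws in the whole range, as you note yourself earlier. Using the heat gain with $r>1$ only lowers the target index below $\sigma_0$. Nor can the $u\otimes u$ contribution be bounded by a function of $\mathcal{X}_p$ alone: by scaling, $\mathcal{X}_p$ controls quadratic terms in $L^1_t$ only at regularity $\frac{d}{p}$, and low-frequency norms at the lower index $\sigma_1+1$ are stronger.

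What actually fails is your step $\|u^\ell\|_{\dot{B}^{\sigma_1}_{p,\infty}}\lesssim Y+\mathcal{X}_p^2$. In $u=m+Q(a)m$, the piece $(Q(a)m)^\ell$ cannot be placed in $\dot{B}^{\sigma_1}_{p,\infty}$ by any product law once $\sigma_1<\sigma_0$.

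A second defect affects every $\sigma_1$. Gr\"onwall gives a time-uniform bound only if the coefficient of $Y$ lies in $L^1(\mathbb{R}_+)$ with norm $\lesssim(1+\mathcal{X}_p)^2\mathcal{X}_p$. Your treatment of $u\otimes u$ gives the coefficient $\|u\|_{\dot{B}^{d/p}_{p,1}}$. Your treatment of the Poisson term gives $\|\nabla\Lambda^{-2}a\|_{\dot{B}^{d/p}_{p,1}}\simeq\|a\|_{\dot{B}^{d/p-1}_{p,1}}$. Both are only in $L^2_t$: interpolating $\widetilde{L}^\infty_t(\dot{B}^{\frac{d}{p}-2}_{p,1})$ with $L^1_t(\dot{B}^{\frac{d}{p}}_{p,1})$ yields $L^2_t(\dot{B}^{\frac{d}{p}-1}_{p,1})$, not $L^1_t$. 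You would end up with a factor like $e^{C\sqrt{t}\,\mathcal{X}_p}$.

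\textbf{The missing idea:} never convert between $u$ and $m$ at the index $\sigma_1$, and balance indices by interpolation so that $Y$ always multiplies an $L^1_t$ dissipation norm.

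For $\sigma_1<\sigma_0$, the paper bounds $\|u\otimes u\|_{\dot{B}^{\sigma_1+1}_{p,\infty}}\lesssim\|u\|^2_{\dot{B}^{\sigma_1^*}_{p,1}}$ with $\sigma_1^*=\frac12(\sigma_1+1+\frac{d}{p})\in(\sigma_0,\frac{d}{p})$; this is exactly where $\sigma_1\ge\sigma_0-1$ enters. It converts $u^\ell$ into $m^\ell$ at the admissible index $\sigma_1^*$, where product laws are available. It then interpolates $\|m^\ell\|^2_{\dot{B}^{\sigma_1^*}_{p,1}}\lesssim\|m^\ell\|_{\dot{B}^{\sigma_1}_{p,\infty}}\|m^\ell\|_{\dot{B}^{\frac{d}{p}+1}_{p,1}}$. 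The last factor is in $L^1_t$, since $\|m^\ell\|_{\dot{B}^{\frac{d}{p}+1}_{p,1}}\lesssim\|u\|_{\dot{B}^{\frac{d}{p}+1}_{p,1}}+\|a\|_{\dot{B}^{\frac{d}{p}}_{p,1}}\|u\|_{\dot{B}^{\frac{d}{p}-1}_{p,1}}$.

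For $\sigma_1\ge\sigma_0$, the paper writes $\dive(u\otimes u)=u\cdot\nabla u+u\,\dive u$. Then the $L^1_t$ factor is $\|\nabla u\|_{\dot{B}^{d/p}_{p,1}}$, and the conversion happens at the admissible index $\sigma_1\ge\sigma_0$.

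For the Poisson term, it interpolates both factors: $\|a\|_{\dot{B}^{\frac{d}{p}-1}_{p,1}}\|a\|_{\dot{B}^{\sigma_1}_{p,\infty}}\lesssim\|a\|_{\dot{B}^{\frac{d}{p}}_{p,1}}\|a\|_{\dot{B}^{\sigma_1-1}_{p,\infty}}$. The last factor is $Y$ on low frequencies, and the high-frequency part is controlled by $\mathcal{X}_p$.

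With these replacements, your overall scheme (crude $L^1_t$ Duhamel bound, then Gr\"onwall) goes through.
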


\begin{proof}
Applying the low-frequency cutoff to \eqref{re:momentum} and using Lemma~\ref{heat Lp}, we obtain 
\begin{equation}\label{4.3}
\begin{aligned}
\|(\Lambda^{-1}a,m)^{\ell}\|_{L^{\infty}_t(\dot{B}^{\sigma_1}_{p,\infty})\cap \widetilde{L}^1_t(\dot{B}^{\sigma_1+2}_{p,\infty})}
&\lesssim \|(\Lambda^{-1} a_0, m_0)^{\ell}\|_{\dot{B}^{\sigma_1}_{p,\infty}}+\|\dive \mathcal{N}^{\ell}\|_{\widetilde{L}^1_t(\dot{B}^{\sigma_1}_{p,\infty})}.
\end{aligned}
\end{equation}
To proceed, we analyze the nonlinear term $\mathcal{N}$. Before that, we claim that, for all $\min\{\frac{d}{p},\frac{d}{p'}\}\leq s\leq \frac{d}{p}$,
\begin{equation}\label{mmndd}
\begin{aligned}
\|u^{\ell}\|_{\dot{B}^{s}_{p,\infty}}\lesssim (1+\|a\|_{\dot{B}^{\frac{d}{p}}_{p,1}}) \|m^\ell\|_{\dot{B}^{s}_{p,\infty}}+(1+\|a\|_{\dot{B}^{\frac{d}{p}}_{p,1}})\|u\|_{\dot{B}^{\frac{d}{p}-1}_{p,1}}\|a\|_{\dot{B}^{\frac{d}{p}}_{p,1}}+\|u\|_{\dot{B}^{s}_{p,1}}^h.
\end{aligned}
\end{equation}
Indeed, there holds
\begin{equation*}
\begin{aligned}\|u^{\ell}\|_{\dot{B}^{s}_{p,\infty}}&\lesssim\|m^\ell\|_{\dot{B}^{s}_{p,\infty}}+\|(I(a)m)^\ell\|_{\dot{B}^{s}_{p,\infty}}+\|u\|_{\dot{B}^{s}_{p,1}}^h.
\end{aligned}
\end{equation*}
Further, we have
\begin{equation*}
\begin{aligned}\|(I(a)m)^\ell\|_{\dot{B}^{s}_{p,\infty}}&\lesssim\|I(a)m^{\ell}\|_{\dot{B}^{s}_{p,\infty}}^\ell+\|I(a)m^h\|_{\dot{B}^{-\min\{\frac{d}{p},\frac{d}{p'}\}}_{p,\infty}}^\ell\\
&\lesssim \|I(a)\|_{\dot{B}^{\frac{d}{p}}_{p,1}}\|m^\ell\|_{\dot{B}^{s}_{p,\infty}}+\|I(a)\|_{\dot{B}^{\frac{d}{p}}_{p,1}}\|m^h\|_{\dot{B}^{-\min\{\frac{d}{p},\frac{d}{p'}\}}_{p,\infty}}\\
&\lesssim \|a\|_{\dot{B}^{\frac{d}{p}}_{p,1}}\big(\|m^\ell\|_{\dot{B}^{s}_{p,\infty}}+\|(1+a)u\|_{\dot{B}^{\frac{d}{p}-1}_{p,1}}^h\big)
\\
&\lesssim \|a\|_{\dot{B}^{\frac{d}{p}}_{p,1}} \|m^{\ell}\|_{\dot{B}^{s}_{p,\infty}}+\|a\|_{\dot{B}^{\frac{d}{p}}_{p,1}}(1+\|a\|_{\dot{B}^{\frac{d}{p}}_{p,1}})\|u\|_{\dot{B}^{\frac{d}{p}-1}_{p,1}}.
\end{aligned}
\end{equation*}
Then, regarding the first term involving $u\otimes u$, we consider it in two cases: 
\begin{itemize}

\item {\emph{Case 1: $-\min\{\frac{d}{p},\frac{d}{p'}\}\leq \sigma_1<\frac{d}{p}-1$.}} 

In this case, the usual product law in $\dot{B}^{\sigma_1}_{p,\infty}$ is admissible. So, one may write $\dive (u\otimes u)=u\cdot\nabla u+u\dive u$ and employ Proposition \ref{prop3.2} ($s_1=\sigma_1,  s_2=\frac{d}{p}$) and \eqref{mmndd} to deduce
\begin{equation*}
\begin{aligned}
\|\dive(u\otimes u)^{\ell}\|_{\widetilde{L}^1_t(\dot{B}^{\sigma_1}_{p,\infty})}&\lesssim \int_0^t \|\nabla u\|_{\dot{B}^{\frac{d}{p}}_{p,1}} \|u\|_{\dot{B}^{\sigma_1}_{p,\infty}}\,d\tau\\
&\lesssim (1+\|a\|_{\widetilde{L}^{\infty}_t(\dot{B}^{\frac{d}{p}}_{p,1})})\int_0^t \|u\|_{\dot{B}^{\frac{d}{p}+1}_{p,1}}\|m^{\ell}\|_{\dot{B}^{\sigma_1}_{p,\infty}}\,d\tau\\
&\quad+(1+\|a\|_{\widetilde{L}^{\infty}_t(\dot{B}^{\frac{d}{p}}_{p,1})})(1+\|u\|_{\widetilde{L}^{\infty}_t(\dot{B}^{\frac{d}{p}-1}_{p,1})})\|u\|_{L^1_t(\dot{B}^{\frac{d}{p}+1}_{p,1})}
\end{aligned}
\end{equation*}

\item {\emph{Case 2: $-\min\{\frac{d}{p},\frac{d}{p'}\}-1\leq \sigma_1< -\min\{\frac{d}{p},\frac{d}{p'}\}$.}} 

We shall take advantage of the property of "one-derivative gain". Since $-\min\{\frac{d}{p},\frac{d}{p'}\}-1< \sigma_1< -\min\{\frac{d}{p},\frac{d}{p'}\}\leq \frac{d}{p}-1$,  $\sigma_1^*=\frac{1}{2}(\sigma_1+1+\frac{d}{p})$ satisfies $2\sigma_1^*-\frac{d}{p}=\sigma_1+1$ and $-\min\{\frac{d}{p},\frac{d}{p'}\}< \sigma_1^*<\frac{d}{p}$.  It thus holds by the third product law in Proposition \ref{prop3.2} ($s_1=s_2=\sigma_1^*$) and \eqref{mmndd} that
\begin{equation*}
\begin{aligned}
\|\dive (u\otimes u)^{\ell}\|_{\widetilde{L}^1_t(\dot{B}^{\sigma_1}_{p,\infty})}&\lesssim \int_0^t \|u\otimes u\|_{\dot{B}^{\sigma_1+1}_{p,\infty}}\,d\tau\lesssim \int_0^t \|u\|_{\dot{B}^{\sigma_1^*}_{p,1}}^2\,d\tau\\
&\lesssim (1+\|a\|_{\widetilde{L}^{\infty}_t(\dot{B}^{\frac{d}{p}}_{p,1})})^2 \int_0^t\|m^\ell\|_{\dot{B}^{\sigma_1^*}_{p,1}}^2\,d\tau\\
&\quad+(1+\|a\|_{\widetilde{L}^{\infty}_t(\dot{B}^{\frac{d}{p}}_{p,1})})^2 \|u\|_{\widetilde{L}^{\infty}_t(\dot{B}^{\frac{d}{p}-1}_{p,1})}^2 \|a\|_{\widetilde{L}^2_t(\dot{B}^{\frac{d}{p}}_{p,1})}^2+\|u\|_{\widetilde{L}^2_t(\dot{B}^{\frac{d}{p}}_{p,1})}^2.
\end{aligned}
\end{equation*}
Note that the real interpolation ensures that
\begin{align*}
\int_0^t\|m^\ell\|_{\dot{B}^{\sigma_1^*}_{p,1}}^2\,d\tau&\lesssim  \int_0^t\|m^\ell\|_{\dot{B}^{\sigma_1}_{p,\infty}} \|m^\ell\|_{\dot{B}^{\frac{d}{p}+1}_{p,1}}\,d\tau\\
&\lesssim \int_0^t  \Big(\|u\|_{\dot{B}^{\frac{d}{p}+1}_{p,1}}+ \|u\|_{\dot{B}^{\frac{d}{p}-1}_{p,1}}\|a\|_{\dot{B}^{\frac{d}{p}}_{p,1}}\Big)\|m^\ell\|_{\dot{B}^{\sigma_1}_{p,\infty}}\,d\tau,
\end{align*}
while we have the control 
\begin{align*}
\|m^\ell\|_{\dot{B}^{\frac{d}{p}+1}_{p,1}}\lesssim \|u^\ell\|_{\dot{B}^{\frac{d}{p}+1}_{p,1}}+\|au\|_{\dot{B}^{\frac{d}{p}-1}_{p,1}}^\ell\lesssim \|u\|_{\dot{B}^{\frac{d}{p}+1}_{p,1}}+\|a\|_{\dot{B}^{\frac{d}{p}}_{p,1}} \|u\|_{\dot{B}^{\frac{d}{p}-1}_{p,1}},
\end{align*}
due to $m=u+au$.
\end{itemize}

Combining the above two cases, we deduce that
\begin{align*}
\|\dive (u\otimes u)^{\ell}\|_{\widetilde{L}^1_t(\dot{B}^{\sigma_1}_{p,\infty})}&\lesssim (1+\mathcal{X}_{p}(t))^2\int_0^t \|u\|_{\dot{B}^{\frac{d}{p}+1}_{p,1}} \|m^{\ell}\|_{\dot{B}^{\sigma_1}_{p,\infty}}\,d\tau+(1+\mathcal{X}_{p}(t))^5\mathcal{X}_{p}(t).
\end{align*}

Now for the viscous term, we only focus on interaction $\div(\tilde\mu_1(a)Dm)$
like the convection term, we have two situations to consider:
\begin{itemize}

\item {\emph{Case 1: $-\min\{\frac{d}{p},\frac{d}{p'}\}\leq \sigma_1<\frac{d}{p}-1$.}} 

In this case, we deduce
\begin{equation*}
\begin{aligned}
\|\dive (\tilde\mu_1(a) Dm)^{\ell}\|_{\widetilde{L}^1_t(\dot{B}^{\sigma_1}_{p,\infty})}&\lesssim\int^{t}_0\|\tilde\mu_1(a) Dm\|_{\dot{B}^{\sigma_1}_{p,\infty}}d\tau\\
&\lesssim\int^{t}_0\|a\|_{\dot{B}^{\frac{d}{p}}_{p,1}}\|Dm\|_{\dot{B}^{\sigma_1}_{p,\infty}}d\tau\\ &\lesssim\int^{t}_0\|a\|_{\dot{B}^{\frac{d}{p}}_{p,1}}\|m^{\ell}\|_{\dot{B}^{\sigma_1}_{p,\infty}}d\tau
+(1+\mathcal{X}_{p}(t))\mathcal{X}_{p}(t).
\end{aligned}
\end{equation*}

\item {\emph{Case 2: $-\min\{\frac{d}{p},\frac{d}{p'}\}-1\leq \sigma_1< -\min\{\frac{d}{p},\frac{d}{p'}\}$.}} 

In this case, we deduce
\begin{equation*}
\begin{aligned}
\|\dive (\tilde\mu_1(a) Dm)^{\ell}\|_{\widetilde{L}^1_t(\dot{B}^{\sigma_1}_{p,\infty})}&\lesssim\int^{t}_0\|\tilde\mu_1(a) Dm\|_{\dot{B}^{\sigma_1+1}_{p,\infty}}d\tau\\
&\lesssim\int^{t}_0\|a\|_{\dot{B}^{\frac{d}{p}}_{p,1}}\|Dm\|_{\dot{B}^{\sigma_1+1}_{p,\infty}}d\tau\\ &\lesssim\int^{t}_0\|a\|_{\dot{B}^{\frac{d}{p}}_{p,1}}\|m^{\ell}\|_{\dot{B}^{\sigma_1}_{p,\infty}}d\tau
+(1+\mathcal{X}_{p}(t))\mathcal{X}_{p}(t).
\end{aligned}
\end{equation*}
\end{itemize}
Therefore, we are able to obtain
\begin{equation*}
\begin{aligned}
\|\dive (\tilde\mu_1(a) Dm)^{\ell}\|_{\widetilde{L}^1_t(\dot{B}^{\sigma_1}_{p,\infty})}\lesssim\int^{t}_0\|a\|_{\dot{B}^{\frac{d}{p}}_{p,1}}\|m^{\ell}\|_{\dot{B}^{\sigma_1}_{p,\infty}}d\tau
+(1+\mathcal{X}_{p}(t))\mathcal{X}_{p}(t).
\end{aligned}
\end{equation*}
Other viscous terms share similar calculations.

Furthermore, regarding the Poisson term, we apply Proposition \ref{prop3.2} and interpolation to verify that
\begin{align*}
\||\nabla \Lambda^{-2} a|^2 \|_{\widetilde{L}^1_t(\dot{B}^{\sigma_1+1}_{p,\infty})}&\lesssim \int_0^t \|\nabla \Lambda^{-2} a\|_{\dot{B}^{\frac{d}{p}}_{p,1}} \|\nabla \Lambda^{-2} a\|_{\dot{B}^{\sigma_1+1}_{p,\infty}}\,d\tau\\
&\lesssim \int_0^t \|a\|_{\dot{B}^{\frac{d}{p}-1}_{p,1}} \|a\|_{\dot{B}^{\sigma_1}_{p,\infty}}\,d\tau\\
&\lesssim  \int_0^t \|a\|_{\dot{B}^{\frac{d}{p}}_{p,1}}^{\frac{1}{\frac{d}{p}+1-\sigma_1}} \|a\|_{\dot{B}^{\sigma_1-1}_{p,\infty}}^{\frac{\frac{d}{p}-\sigma_1}{\frac{d}{p}+1-\sigma_1}}  \|a\|_{\dot{B}^{\frac{d}{p}}_{p,1}}^{\frac{\frac{d}{p}-\sigma_1}{\frac{d}{p}+1-\sigma_1}} \|a\|_{\dot{B}^{\sigma_1-1}_{p,\infty}}^{\frac{1}{\frac{d}{p}+1-\sigma_1}} \,d\tau\\
&\lesssim  \int_0^t \|a\|_{\dot{B}^{\frac{d}{p}}_{p,1}} \|a^{\ell}\|_{\dot{B}^{\sigma_1-1}_{p,\infty}}\,d\tau+\|a\|_{\widetilde{L}^{\infty}_t(\dot{B}^{\frac{d}{p}}_{p,1})} \|a\|_{L^1_t(\dot{B}^{\frac{d}{p}}_{p,1})}^h.
\end{align*}
Here the last inequality we used the decomposition $a=a^\ell+a^h$. The term involving $\nabla \Lambda^{-2} a\otimes \nabla \Lambda^{-2} a$ can be addressed similarly. As for the pressure part, one may write $H(a)=a\widetilde{H}(a)$ with a smooth function $\widetilde{H}(a)$ satisfying $\widetilde{H}(0)=0$. Then, the usual product laws and continuity of the composition function $\widetilde{H}(a)$ imply
\begin{align*}
\|H(a)\|_{\widetilde{L}^{1}_t(\dot{B}^{\sigma_1+1}_{p,\infty})}\lesssim \int_0^t \|a\|_{\dot{B}^{\frac{d}{p}}_{p,1}} \|a\|_{\dot{B}^{\sigma_1+1}_{p,\infty}}\,d\tau\lesssim \int_0^t \|a\|_{\dot{B}^{\frac{d}{p}}_{p,1}} \|a^{\ell}\|_{\dot{B}^{\sigma_1-1}_{p,\infty}}\,d\tau+\|a\|_{\widetilde{L}^{\infty}_t(\dot{B}^{\frac{d}{p}}_{p,1})} \|a\|_{\widetilde{L}^{1}_t(\dot{B}^{\frac{d}{p}}_{p,1})}^h.
\end{align*}
Collecting the above nonlinear estimates into \eqref{4.3} leads to
\begin{align*}
&\quad\|(\Lambda^{-1}a,m)^{\ell}\|_{L^{\infty}_t(\dot{B}^{\sigma_1}_{p,\infty})\cap \widetilde{L}^1_t(\dot{B}^{\sigma_1+2}_{p,\infty})}\\
&\lesssim (1+\mathcal{X}_{p}(t))^2\int_0^t \|(\nabla u,a)\|_{\dot{B}^{\frac{d}{p}}_{p,1}} \|(\Lambda^{-1}a,m)^{\ell}\|_{\dot{B}^{\sigma_1}_{p,\infty}}\,d\tau+(1+\mathcal{X}_{p}(t))^5 \mathcal{X}_{p}(t),
\end{align*}
which, together with Gr\"onwall's inequality, gives rise to the desired bound \eqref{4.55}.
\end{proof}

\subsection{A time-weighted approach}
In this section, we provide a detailed proof of the upper bounds in the time-decay estimates.
To this end, we introduce the following time-weighted functional:
\begin{equation}\label{Dt}
\begin{aligned}
\mathcal{D}_{p,M}(t):&=\|\tau^M( \Lambda^{-1} a, u )^{\ell}\|_{\widetilde{L}^{\infty}_t(\dot{B}^{\frac{d}{p}+1}_{p,1})}+\|\tau^M( \Lambda^{-1} a,  u )^{\ell}\|_{L^{1}_t(\dot{B}^{\frac{d}{p}+3}_{p,1})}\\
&\quad+\|\tau^M( \Lambda a, u )\|_{\widetilde{L}^{\infty}_{t}(\dot{B}^{\frac{d}{p}-1}_{p,1})}^h+\|\tau^M(  a,\Lambda u )\|_{ L^{1}_t(\dot{B}^{\frac{d}{p}}_{p,1})}^h.
\end{aligned}
\end{equation}
We emphasize that $\mathcal{D}_{p,M}(t)$ is not equivalent to the functional 
$\mathcal{X}_p(t)$ (see \eqref{Xt}) used in global existence. Instead, it is designed to capture the
maximal dissipative regularity in the $L^\infty_t$–based framework,
which reveals an enhanced dissipative structure at low frequencies.
The weight exponent $M$ is introduced only to avoid a potential singularity as $t\to0$.
It does not affect the final decay rates since the weight is removed at the end of the argument
(see \eqref{4.32}). Furthermore, we keep the superscript $^\ell$ inside the norm, which will be convenient for applying interpolation inequalities.

 We have the following proposition.

\begin{prop}\label{lemma:low:weighted}
Let $(a,u)$ with $m=(1+a)u$ be the global solution to \eqref{linearized:1} such that $\mathcal{X}_{p}(t)<\infty$. For any $M\gg 1$ and $t>0$, if it holds that
\begin{equation}
\begin{aligned}
\mathcal{D}_{p,M}(t)&\lesssim e^{C\mathcal{X}_p(t)+C\mathcal{X}_{p}(t)^2}\Big(\mathcal{X}_{p}(t)+ \|(\Lambda^{-1} a,m)^{\ell}\|_{L^{\infty}_t(\dot{B}^{\sigma_1}_{p,\infty})} \Big)t^{M-\frac{1}{2}(\frac{d}{p}+1-\sigma_1)}.\label{4.32}
\end{aligned}
\end{equation}
Furthermore, if $X(t)<<1$, then we have
\begin{align}\label{highu}
\|\tau^M u\|_{\widetilde{L}^{\infty}_t(\dot{B}^{\frac{d}{p}+1}_{p,1})}^h\lesssim \mathcal{D}_{p,M}(t)+\mathcal{X}_p(t).
\end{align}
Here, $\mathcal{X}_{p}$ and $\mathcal{D}_{p,M}$ are defined by \eqref{Xt} and \eqref{Dt}, respectively.
\end{prop}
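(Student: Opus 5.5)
The plan is to multiply the localized equations by $t^M$ and derive a time-integrated (rather than differential) version of the formal Lyapunov inequality \eqref{L::}. Then we close it with Grönwall and an interpolation against the propagated $\dot B^{\sigma_1}_{p,\infty}$ bound of Proposition~\ref{lemma:evolution:gamma>0}.

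\emph{Low frequencies.} We work with \eqref{re:momentum} for $(\Lambda^{-1}a,m)$. Setting $(U,V)=t^M(\Lambda^{-1}a,m)^\ell$, this pair solves the same linear system with an extra source $Mt^{M-1}(\Lambda^{-1}a,m)^\ell$ plus $t^M\dive\mathcal N^\ell$. Since $U,V$ vanish at $t=0$, Proposition~\ref{linear wellposedness} with $s=\frac dp+1$ gives
\[
\|t^M(\Lambda^{-1}a,m)^\ell\|_{\widetilde L^\infty_t(\dot B^{\frac dp+1}_{p,1})\cap L^1_t(\dot B^{\frac dp+3}_{p,1})}\lesssim M\int_0^t\tau^{M-1}\|(\Lambda^{-1}a,m)^\ell\|_{\dot B^{\frac dp+1}_{p,1}}d\tau+\int_0^t\tau^M\|\dive\mathcal N\|^\ell_{\dot B^{\frac dp+1}_{p,1}}d\tau.
\]
The first term on the right is the key one. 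We interpolate $\dot B^{\frac dp+1}_{p,1}$ between $\dot B^{\sigma_1}_{p,\infty}$ and $\dot B^{\frac dp+3}_{p,1}$ with parameter $\theta=\frac{\frac dp+1-\sigma_1}{\frac dp+3-\sigma_1}$. Young's inequality in time then splits it as
\[
\eta\int_0^t\tau^M\|(\cdot)^\ell\|_{\dot B^{\frac dp+3}_{p,1}}+C_\eta\|(\Lambda^{-1}a,m)^\ell\|_{L^\infty_t(\dot B^{\sigma_1}_{p,\infty})}\int_0^t\tau^{M-1-\frac12(\frac dp+1-\sigma_1)}d\tau .
\]
The $\eta$-part is absorbed into the left-hand side. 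The remaining integral equals $t^{M-\frac12(\frac dp+1-\sigma_1)}$ up to a constant, since $M$ is large; this is exactly the rate in \eqref{4.32}. Finally we pass from $m$ back to $u$ in low frequencies via $u=m+Q(a)m$ and product laws at regularity $\frac dp+1$, with errors of the form $\mathcal X_p\cdot\mathcal D_{p,M}$.

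\emph{High frequencies.} We repeat Step~1 of Proposition~\ref{prop31}, i.e. the effective velocity $w$ of \eqref{def:w} and the damped equation \eqref{eq:a-damped}, but for $t^M(a,u)^h$. The weight produces an additional source $Mt^{M-1}(a,u)^h$. Because high frequencies are exponentially damped (Proposition~\ref{heat Lp}), we use $2^{-j}\lesssim 2^{-j_0}$ to write
\[
Mt^{M-1}\|(\Lambda a,u)\|^h_{\dot B^{\frac dp-1}_{p,1}}\le \eta\, t^M\|(a,\Lambda u)\|^h_{\dot B^{\frac dp}_{p,1}}+C_\eta\, t^{M-1}\|(\cdot)\|^h.
\]
The leftover is bounded by $t^{M-1}$ times quantities decaying at least like the low-frequency rate. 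Concretely, we bound $\|(\cdot)\|^h$ by $\|(a,u)\|^h_{\dot B^{\frac dp+1}}$ interpolated with $\mathcal X_p$, and iterate Young's inequality so that the net contribution is $\lesssim \mathcal X_p\,t^{M-\frac12(\frac dp+1-\sigma_1)}$.

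\emph{Nonlinear terms.} These are the main obstacle, since no smallness may be used. Each nonlinear term is estimated by product laws in the form
\[
\int_0^t G(\tau)\,\tau^M E(\tau)\,d\tau,\qquad G=\|(a,\nabla u)\|_{\dot B^{\frac dp}_{p,1}}+\|u\|^2_{\dot B^{\frac dp}_{p,1}},\qquad E=\|(\Lambda^{-1}a,m)^\ell\|_{\dot B^{\frac dp+1}_{p,1}}+\|(a,\Lambda u)\|^h_{\dot B^{\frac dp+1}_{p,1}} .
\]
Here $\int_0^\infty G\lesssim \mathcal X_p+\mathcal X_p^2$. Some terms need the dissipative norm in place of $E$. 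For those we use the interpolation/Young trick once more, trading part of the factor for $\eta\times$dissipation and keeping only an $L^1_t$-integrable factor, as in the treatment of the Poisson term in Proposition~\ref{lemma:evolution:gamma>0}. Grönwall's inequality in the integral form, applied to $\sup_{\tau\le t}$ of the weighted energy, then produces the factor $e^{C\mathcal X_p+C\mathcal X_p^2}$ in \eqref{4.32}. The delicate points are the quadratic terms $u\otimes u$ and $Q(a)m$ at low frequency: they must avoid placing the $L^\infty_t$ weight on a factor whose $L^1_t$ norm is unavailable.

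\emph{Proof of \eqref{highu}.} We apply maximal parabolic regularity (Lemma~\ref{lemma6.1}) to $t^Mu^h$ in $\partial_tu-\bar{\mathcal A}u=-\nabla a-\nabla\Lambda^{-2}a+g$, in the $\widetilde L^\infty_t(\dot B^{\frac dp+1}_{p,1})$ norm, which is a 2-derivative gain from an $\widetilde L^\infty_t(\dot B^{\frac dp-1})$ source. The source $t^M\nabla a^h$ is controlled by $\mathcal D_{p,M}$. The weight derivative $Mt^{M-1}u^h$ is bounded by $\mathcal D_{p,M}+\mathcal X_p$ at high frequency. The term $t^Mg$ is handled by product laws, and its top-order part $\tilde\mu(a)\nabla^2u$ is absorbed using $\mathcal X_p\ll1$.
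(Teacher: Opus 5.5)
Your proposal follows essentially the same route as the paper. The shared steps are: $t^M$-weighted Duhamel estimates for $(\Lambda^{-1}a,m)^\ell$ via the low-frequency semigroup bound; the weight-derivative term interpolated between $\dot B^{\sigma_1}_{p,\infty}$ and $\dot B^{\frac dp+3}_{p,1}$ and split by Young's inequality to produce $t^{M-\frac12(\frac dp+1-\sigma_1)}$; the effective-velocity argument at high frequencies; nonlinear terms reduced to $\eta$ times dissipation plus $\int_0^t G\,\tau^M E\,d\tau$ with $G\in L^1_t$, closed by Gr\"onwall; the $m\to u$ conversion of Lemma~\ref{lemma42}; and maximal regularity with $\tau^{M-1}\lesssim\tau^M+1$ for \eqref{highu}.

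A few small corrections are needed.
\begin{itemize}
\item The high-frequency part of $E$ must be $\|(\Lambda a,u)\|^h_{\dot B^{\frac dp-1}_{p,1}}$, the quantity in $\mathcal D_{p,M}$. The norm $\|(a,\Lambda u)\|^h_{\dot B^{\frac dp+1}_{p,1}}$ you wrote is not controlled by anything available.
\item The high-frequency $\tau^{M-1}$ term needs no decay input and no $\dot B^{\frac dp+1}$ bound on $a$. One pointwise Young inequality suffices: $\tau^{M-1}X\le\eta\tau^M X+C_\eta\tau^{M-1-\frac12(\frac dp+1-\sigma_1)}X$, with $X=\|(\Lambda a,u)\|^h_{\dot B^{\frac dp-1}_{p,1}}\le\mathcal X_p$ and $X\lesssim\|(a,\Lambda u)\|^h_{\dot B^{\frac dp}_{p,1}}$.
\item Contrary to your remark that no smallness may be used, the error $\mathcal X_p\,\mathcal D_{p,M}$ from the $m\to u$ conversion is not of Gr\"onwall type. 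It must be absorbed using $\mathcal X_p\ll1$, as the paper implicitly does; this smallness is available for the global solution of Theorem~\ref{thm2.1}.
\end{itemize}
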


\begin{proof}


Motivated by the low-frequency regularity gain \eqref{4.55}, we adopt different strategies in low and high frequencies: in low frequencies, we work with the momentum formulation and perform time-weighted estimates, whereas in high frequencies we switch back to the velocity variable in order to avoid derivative loss in  nonlinear terms (see also \cite{BSXZ} for the use of similar variables in the Fourier semigroup setting).

\begin{itemize}

\item {\emph{Low-frequency analysis}}

\end{itemize}

    Multiplying \eqref{linearized:1} by $t^{M}$, we have
\begin{equation}\label{t:re:momentum}
\left\{
\begin{aligned}
&\partial_t (t^M a) + \mathrm{div}\,(t^M m) = f_M,\\[1mm]
&\partial_t (t^M m)
+ \nabla (t^M a)+\frac{\nabla}{-\Delta} (t^M a)
- \bar{\mathcal{A}}(t^M m)
= g_M,\\
&(t^M a, t^M m)(0,x)=(0,0)
\end{aligned}
\right.
\end{equation}
with
$$
f_M=  M t^{M-1}a,\quad g_M=Mt^{M-1} m+\dive t^M \mathcal{N}
$$
Here, the quadratic term $\mathcal{N}$ is defined  in \eqref{N}. Then,  Duhamel's principle for \eqref{t:re:momentum} yields
\begin{equation}\label{Deltaj}
    \begin{aligned}
\left(
\begin{array}{cc}
\dot{\Delta}_j(t^M \Lambda^{-1}H a)\\
\dot{\Delta}_j(t^M m)\\
\end{array}
         \right)
 =\int_0^t\mathcal{G}(t-s) \left(
\begin{array}{cc}
\Lambda^{-1}H \dot{\Delta}_j f_M\\
 \dot{\Delta}_j g_M 
\end{array}
         \right).
    \end{aligned}
\end{equation}
with $H={\rm Id}-\Delta$ (see Subsection \ref{Sublinear}).

For all $j\leq j_0$, Proposition \ref{heat Lp} applied to \eqref{Deltaj} ensures that
\begin{equation*}
    \begin{aligned}
    &\|\dot{\Delta}_j(t^M \Lambda^{-1}a,  t^Mm )^{\ell}\|_{L^p}\lesssim \int_0^t e^{-r_0 2^{2j}(t-s)} \|\dot{\Delta}_j(\Lambda^{-1}f_M, g_M)^{\ell}\|_{L^p}\,ds.
    \end{aligned}
\end{equation*}
Consequently, by Young's inequality for convolutions, we have
\begin{equation}\label{4.17}
    \begin{aligned}
    &\quad \|\dot{\Delta}_j(\tau^M \Lambda^{-1} a,  \tau^M m )^{\ell}\|_{L^{\infty}_t(L^p)}+2^{2j}\|\dot{\Delta}_j(\tau^M \Lambda^{-1} a,  \tau^Mm )^{\ell}\|_{L^{1}_t(L^p)}\\
    &\lesssim \|(\Lambda^{-1}f_M, g_M)^{\ell}\|_{L^{1}_t(L^p)}.
    \end{aligned}
\end{equation}
Note that the above inequality holds for all $j\in\mathbb{Z}$ since $\dot{\Delta}_j\dot{S}_{j_0}=0$ for $j\geq j_0+1$. Then, summing \eqref{4.17} over $j\in\mathbb{Z}$ with the weight $2^{(\frac{d}{p}+1)j}$ yields
\begin{equation*}
    \begin{aligned}
&\|\tau^M( \Lambda^{-1} a, m )^{\ell}\|_{\widetilde{L}^{\infty}_t(\dot{B}^{\frac{d}{p}+1}_{p,1})}+\|\tau^M( \Lambda^{-1} a,  m )^{\ell}\|_{L^{1}_t(\dot{B}^{\frac{d}{p}+3}_{p,1})}\lesssim \|(\Lambda^{-1}f_M, g_M)^{\ell}\|_{L^{1}_t(\dot{B}^{\frac{d}{p}+1}_{p,1})}
    \end{aligned}
\end{equation*}
Now we analyze the term
\begin{equation}\label{pppm}
    \begin{aligned}
    &\|(\Lambda^{-1}f_M, g_M)^{\ell}\|_{L^{1}_t(\dot{B}^{\frac{d}{p}+1}_{p,1})}\lesssim \|\tau^{M-1}(\Lambda^{-1} a,m)^{\ell}\|_{L^{1}_t(\dot{B}^{\frac{d}{p}+1}_{p,1})}+\|\tau^M \mathcal{N}^{\ell}\|_{L^{1}_t(\dot{B}^{\frac{d}{p}+2}_{p,1})}.
    \end{aligned}
\end{equation}
We now analyze the terms on the r.h.s. of \eqref{pppm} as follows. Let $\theta\in(0,1)$ be given by
$$
\sigma_1 \theta+(\frac{d}{p}+3)(1-\theta)=1.
$$
From the real interpolation, we obtain
\begin{equation*}
    \begin{aligned}
    &\quad \|\tau^{M-1}(\Lambda^{-1} a,m)^{\ell}\|_{L^{1}_t(\dot{B}^{\frac{d}{p}+1}_{p,1})}\\
    &=\int_0^t \tau^{M-1}\|(\Lambda^{-1} a,m)^{\ell}\|_{\dot{B}^{\sigma_1}_{p,\infty}}^{\theta} \|(\Lambda^{-1} a,m)^{\ell}\|_{\dot{B}^{\frac{d}{p}+3}_{p,1}}^{1-\theta}\,d\tau\\
    &\leq \|(\Lambda^{-1} a,m)^{\ell}\|_{L^{\infty}_t(\dot{B}^{\sigma_1}_{p,\infty})}^{\theta} \|(\Lambda^{-1} a,m)^{\ell}\|_{L^1_t(\dot{B}^{\frac{d}{p}+3}_{p,1})}^{1-\theta} \Big(\int_0^t \tau^{M-\frac{1}{1-\theta}}d\tau \Big)^{1-\theta}\\
    &\leq \eta \|(\Lambda^{-1} a,m)^{\ell}\|_{L^1_t(\dot{B}^{\frac{d}{p}+3}_{p,1})}+\eta^{-1}  \|(\Lambda^{-1} a,m)^{\ell}\|_{L^{\infty}_t(\dot{B}^{\sigma_1}_{p,\infty})} t^{M-\frac{1}{2}(\frac{d}{p}+1-\sigma_1)}.
    \end{aligned}
\end{equation*}

We now analyze every nonlinear term in $\mathcal{N}$.  Note that $u\otimes u$ can be decomposed by
\begin{equation*}
    \begin{aligned}
    (u\otimes u)^{\ell}=(u^{\ell}\otimes u^{\ell})^{\ell}+(u^{\ell}\otimes u^{h})^{\ell}+(u^{h}\otimes u^{\ell})^{\ell}+(u^{h}\otimes u^{h})^{\ell}.
    \end{aligned}
\end{equation*}
Thus, Moser type product law (the first inequality in Proposition \ref{prop3.2}) and interpolation properties lead to
\begin{equation*}
    \begin{aligned}
  & \quad\| \tau^{M}(u^{\ell}\otimes u^{\ell})^{\ell}\|_{L^1_t(\dot{B}^{\frac{d}{p}+2}_{p,1})}\\
  &\lesssim \int_0^t \tau^{M}\|u^\ell\|_{\dot{B}^{\frac{d}{p}}_{p,1}}\| u^\ell \|_{\dot{B}^{\frac{d}{p}+2}_{p,1}}\,d\tau\\
  &\lesssim \int_0^t\tau^{M} \|u^\ell\|_{\dot{B}^{\frac{d}{p}}_{p,1}} \Big(\tau^{M}\| u^\ell \|_{\dot{B}^{\frac{d}{p}+1}_{p,1}}\Big)^{\frac{1}{2}}\Big(\tau^{M}\| u^\ell \|_{\dot{B}^{\frac{d}{p}+3}_{p,1}}\Big)^{\frac{1}{2}}\,d\tau \\
  &\lesssim \eta \|\tau^{M} u^\ell \|_{L^1_t(\dot{B}^{\frac{d}{p}+3}_{p,1})}+\eta^{-1} \int_0^t\|u^\ell\|_{\dot{B}^{\frac{d}{p}}_{p,1}}^2 \tau^{M}\| u^\ell \|_{\dot{B}^{\frac{d}{p}+1}_{p,1}}\,d\tau.
    \end{aligned}
\end{equation*}
On the other hand, one also has
\begin{equation*}
    \begin{aligned}
  & \quad \|\tau^{M} (u^{\ell}\otimes u^{h},u^{h}\otimes u^{\ell},u^h\otimes u^h)^{\ell}\|_{L^{1}_t(\dot{B}^{\frac{d}{p}+2}_{p,1})}\\
  &\lesssim  \|\tau^{M} (u^{\ell}\otimes u^{h},u^{h}\otimes u^{\ell},u^h\otimes u^h)\|_{L^{1}_t(\dot{B}^{\frac{d}{p}}_{p,1})}^{\ell}\\
  &\lesssim \int_0^t \tau^M \|u\|_{\dot{B}^{\frac{d}{p}}_{p,1}} \|u^h\|_{\dot{B}^{\frac{d}{p}}_{p,1}}\,d\tau\\
  &\lesssim \eta \|\tau^M u\|_{L^1_t(\dot{B}^{\frac{d}{p}+1}_{p,1})}^h+\eta^{-1}\int_0^t \|u\|_{\dot{B}^{\frac{d}{p}}_{p,1}}^2 \tau^M  \|u\|_{\dot{B}^{\frac{d}{p}-1}_{p,1}}^h\,d\tau.
    \end{aligned}
\end{equation*}
Since $H(0)=H'(0)=0$, one can write $H(a)=\widetilde{H}(a)a$ with a smooth function satisfying $\widetilde{H}(0)=0$. Using frequency cut-off and standard bounds for product and composite functions, we have
\begin{align*}
\|\tau^M H(a)\|_{L^{1}_t(\dot{B}^{\frac{d}{p}+3}_{p,1})}^{\ell}&\lesssim \int_0^t \tau^M \|\widetilde H(a)a\|_{\dot{B}^{\frac{d}{p}}_{p,1}}\,d\tau\lesssim \int_0^t \|a\|_{\dot{B}^{\frac{d}{p}}_{p,1}} \tau^M  \|a\|_{\dot{B}^{\frac{d}{p}}_{p,1}}\,d\tau.
\end{align*}
Concerning the nonlinearities from the Poisson term, one also finds that
\begin{align*}
\||\nabla\Lambda^{-2}a|^2\|_{L^{1}_t(\dot{B}^{\frac{d}{p}+2}_{p,1})}^{\ell}&\lesssim \int_0^t \tau^{M}\Big(\||\nabla\Lambda^{-2}a^{\ell}|^2\|_{\dot{B}^{\frac{d}{p}+2}_{p,1}}^{\ell}\\
&\quad+\|(\nabla\Lambda^{-2}a^{\ell}\cdot \nabla\Lambda^{-2}a^{h},\nabla\Lambda^{-2}a^{h}\cdot \nabla\Lambda^{-2}a^{\ell},\nabla\Lambda^{-2}a^{h}\cdot \nabla\Lambda^{-2}a^{h}) \|_{\dot{B}^{\frac{d}{p}}_{p,1}}^{\ell}\Big)\,d\tau\\
&\lesssim \int_0^t \tau^M \|\nabla\Lambda^{-2}a\|_{\dot{B}^{\frac{d}{p}}_{p,1}} \Big(\|\nabla\Lambda^{-2}a^{\ell}\|_{\dot{B}^{\frac{d}{p}+2}}+\|\nabla\Lambda^{-2}a^h\|_{\dot{B}^{\frac{d}{p}}_{p,1}}\Big)\,d\tau\\
&\lesssim \int_0^t \tau^M \|a\|_{\dot{B}^{\frac{d}{p}-1}_{p,1}}\Big( \|a^\ell\|_{\dot{B}^{\frac{d}{p}}_{p,1}}^{\frac{1}{2}} \|a^\ell\|_{\dot{B}^{\frac{d}{p}+2}_{p,1}}^{\frac{1}{2}}+\|a^h\|_{\dot{B}^{\frac{d}{p}}_{p,1}}^{\frac{1}{2}} \|a^h\|_{\dot{B}^{\frac{d}{p}}_{p,1}}^{\frac{1}{2}}\Big)\,d\tau\\
&\lesssim \eta \Big( \|\tau^M a^\ell\|_{L^1_t(\dot{B}^{\frac{d}{p}+2}_{p,1})}+\|\tau^M a\|_{L^1_t(\dot{B}^{\frac{d}{p}}_{p,1})}^h\Big)+\eta^{-1}\int_0^t \|a\|_{\dot{B}^{\frac{d}{p}-1}_{p,1}}^2\tau^M \|a\|_{\dot{B}^{\frac{d}{p}}_{p,1}}\,d\tau,
\end{align*}
and similar bound is true for $\nabla \Lambda^{-2} a\otimes \nabla \Lambda^{-2} a$. 

Concerning the terms arising from viscosities, one may write 
\begin{align*}
2\tilde\mu_1(a)Dm+2(\bar\mu_1+\tilde\mu_1(a))D(Q(a)m)&=2\tilde\mu_1(a)Du-2\bar\mu_1 D(au).
\end{align*}
Similar calculations yield
\begin{equation*}
    \begin{aligned}
 \|\tau^M \widetilde{\mu}_1(a) D u\|_{L^{1}_t(\dot{B}^{\frac{d}{p}+1}_{p,1})}^{\ell} &\lesssim \int_0^t \tau^M\|\widetilde{\mu}_1(a) D u\|_{\dot{B}^{\frac{d}{p}}_{p,1}}^{\ell}\,d\tau\lesssim \int_0^t\|u\|_{\dot{B}^{\frac{d}{p}+1}_{p,1}}  \tau^M \|a\|_{\dot{B}^{\frac{d}{p}}_{p,1}} \,d\tau,
    \end{aligned}
\end{equation*}
and
\begin{equation*}
    \begin{aligned}
&\quad \|\tau^M D(au)\|_{L^{1}_t(\dot{B}^{\frac{d}{p}+2}_{p,1})}^{\ell} \\
&\lesssim \int_0^t\tau^M\Big( \|a^\ell u^\ell\|_{\dot{B}^{\frac{d}{p}+3}_{p,1}}^\ell+  \|(a^\ell u^h,a^h u^\ell, a^h u^h)\|_{\dot{B}^{\frac{d}{p}}_{p,1}}^\ell\Big)\,d\tau\\
&\lesssim \int_0^t\tau^M  \|(a,u)\|_{\dot{B}^{\frac{d}{p}}}\Big( \|(a,u)^\ell\|_{\dot{B}^{\frac{d}{p}+3}_{p,1}}+\|(a,u)^h\|_{\dot{B}^{\frac{d}{p}}_{p,1}}\Big)\,d\tau\\
&\lesssim \eta \Big(\|\tau^M (a,u)^\ell\|_{L^1_t(\dot{B}^{\frac{d}{p}+3}_{p,1})}+\|\tau^M (a,u)\|_{L^1_t(\dot{B}^{\frac{d}{p}}_{p,1})}^h\Big)+\eta^{-1} \int_0^t \|(a,u)\|_{\dot{B}^{\frac{d}{p}}_{p,1}}^2 \tau^M \|(\Lambda^{-1}a,u)^\ell\|_{\dot{B}^{\frac{d}{p}+1}_{p,1}}\,d\tau.
    \end{aligned}
\end{equation*}
Collecting the above nonlinear estimates, we have
\begin{equation}
    \begin{aligned}
&\quad\|\tau^M( \Lambda^{-1} a, m )^{\ell}\|_{\widetilde{L}^{\infty}_t(\dot{B}^{\frac{d}{p}+1}_{p,1})}+\|\tau^M( \Lambda^{-1} a,  m )^{\ell}\|_{L^{1}_t(\dot{B}^{\frac{d}{p}+3}_{p,1})}\\
&\lesssim \eta\Big(\|\tau^M (a,u)^\ell\|_{L^1_t(\dot{B}^{\frac{d}{p}+3}_{p,1})}+\|\tau^M (a,u)\|_{L^1_t(\dot{B}^{\frac{d}{p}}_{p,1})}^h\Big)\\
&\quad\quad+\eta^{-1}\|(\Lambda^{-1}a,m)^\ell\|_{L^{\infty}_t(\dot{B}^{\sigma_1}_{p,\infty})} t^{M-\frac{1}{2}(\frac{d}{p}+1-\sigma_1)}\\
&\quad\quad+\eta^{-1} \int_0^t \|(a,u)\|_{\dot{B}^{\frac{d}{p}}_{p,1}}^2 \tau^M \Big(\|a^\ell\|_{\dot{B}^{\frac{d}{p}}_{p,1}}+\|u^\ell\|_{\dot{B}^{\frac{d}{p}+1}_{p,1}}\Big)\,d\tau+\int_0^t \|a\|_{\dot{B}^{\frac{d}{p}}_{p,1}} \tau^M  \|a\|_{\dot{B}^{\frac{d}{p}}_{p,1}}\,d\tau.\label{lowM0}
\end{aligned}
\end{equation}

To proceed, we need to explain the relation between the bounds for $u$ and $m$, which is stated as follows.
\begin{lem}\label{lemma42}
It holds that
\begin{equation}\label{um1}
\begin{aligned}
\|\tau^M u^\ell\|_{\widetilde{L}^{\infty}_t(\dot{B}^{\frac{d}{p}+1}_{p,1})}&\lesssim \|\tau^M m^\ell\|_{\widetilde{L}^{\infty}_t(\dot{B}^{\frac{d}{p}+1}_{p,1})} \\
&\quad+\mathcal{X}_{p}(t)\Big(\| \tau^M (\Lambda^{-1}a, u^\ell)\|_{\widetilde{L}^{\infty}_t(\dot{B}^{\frac{d}{p}+1}_{p,1})}+\| \tau^M (\Lambda a, u)\|_{\widetilde{L}^{\infty}_t(\dot{B}^{\frac{d}{p}-1}_{p,1})}^h\Big),
\end{aligned}
\end{equation}
and
\begin{equation}\label{um2}
\begin{aligned}
\|\tau^M u^\ell\|_{L^{1}_t(\dot{B}^{\frac{d}{p}+3}_{p,1})}&\lesssim \|\tau^M m^\ell\|_{L^{1}_t(\dot{B}^{\frac{d}{p}+3}_{p,1})}+\int_0^t \|(a,u)\|_{\dot{B}^{\frac{d}{p}}_{p,1}}^2 \tau^M \|(\Lambda^{-1}a,u)^\ell\|_{\dot{B}^{\frac{d}{p}+1}_{p,1}}\,d\tau.
\end{aligned}
\end{equation}
\end{lem}

\begin{proof}
As $u=m-au$, one may have
\begin{align*}
\|\tau^M u^\ell\|_{\widetilde{L}^{\infty}_t(\dot{B}^{\frac{d}{p}+1}_{p,1})}&\lesssim \|\tau^M m^\ell\|_{\widetilde{L}^{\infty}_t(\dot{B}^{\frac{d}{p}+1}_{p,1})}+\|\tau^M (a u)^\ell\|_{\widetilde{L}^{\infty}_t(\dot{B}^{\frac{d}{p}+1}_{p,1})}.
\end{align*}
Here, arguing similarly as before leads to
\begin{align*}
\|\tau^M (a u)^\ell\|_{\widetilde{L}^{\infty}_t(\dot{B}^{\frac{d}{p}+1}_{p,1})}&\lesssim \|\tau^M (a^\ell u^\ell)\|_{\widetilde{L}^{\infty}_t(\dot{B}^{\frac{d}{p}+1}_{p,1})}^\ell+\|\tau^M (a^\ell u^h,a^h u^\ell, a^h u^h)\|_{\widetilde{L}^{\infty}_t(\dot{B}^{\frac{d}{p}-1}_{p,1})}^\ell\\
&\lesssim \|a^\ell\|_{\widetilde{L}^{\infty}_t(\dot{B}^{\frac{d}{p}}_{p,1})} \| \tau^M u^\ell\|_{\widetilde{L}^{\infty}_t(\dot{B}^{\frac{d}{p}+1}_{p,1})}+\|u^\ell\|_{\widetilde{L}^{\infty}_t(\dot{B}^{\frac{d}{p}}_{p,1})} \| \tau^M a^\ell\|_{\widetilde{L}^{\infty}_t(\dot{B}^{\frac{d}{p}+1}_{p,1})}\\
&\quad+\|(\Lambda a,u)\|_{\widetilde{L}^{\infty}_t(\dot{B}^{\frac{d}{p}-1}_{p,1})} \|\tau^M (\Lambda a,u)^h\|_{\widetilde{L}^{\infty}_t(\dot{B}^{\frac{d}{p}}_{p,1})}\\
&\lesssim \|(\Lambda a,u)\|_{\widetilde{L}^{\infty}_t(\dot{B}^{\frac{d}{p}-1}_{p,1})} \Big(\| \tau^M (\Lambda a, u)^\ell\|_{\widetilde{L}^{\infty}_t(\dot{B}^{\frac{d}{p}-1}_{p,1})} +\| \tau^M (\Lambda a,  u)\|_{\widetilde{L}^{\infty}_t(\dot{B}^{\frac{d}{p}-1}_{p,1})}^h\Big).
\end{align*}
So, \eqref{um1} is proved.

Similarly, we have
\begin{align*}
\|\tau^M u^\ell\|_{L^{1}_t(\dot{B}^{\frac{d}{p}+3}_{p,1})}&\lesssim \|\tau^M m^\ell\|_{L^{1}_t(\dot{B}^{\frac{d}{p}+3}_{p,1})}+\|\tau^M (a u)^\ell\|_{L^{1}_t(\dot{B}^{\frac{d}{p}+3}_{p,1})}.
\end{align*}
The previous analysis shows that
\begin{align*}
\|\tau^M (a u)^\ell\|_{L^{1}_t(\dot{B}^{\frac{d}{p}+3}_{p,1})}&\lesssim \eta_1\|\tau^M (a,u)^\ell\|_{L^1_t(\dot{B}^{\frac{d}{p}+3}_{p,1})}+ \eta_1\int_0^t \|(a,u)\|_{\dot{B}^{\frac{d}{p}}_{p,1}}^2 \tau^M \|(\Lambda^{-1}a,u)^\ell\|_{\dot{B}^{\frac{d}{p}+1}_{p,1}}\,d\tau.
\end{align*}
Consequently, taking $\eta_1$ small enough, we get \eqref{um2}. This finishes the proof of Lemma \ref{lemma42}.\end{proof}


Then, recalling \eqref{lowM0} and using \eqref{um1}-\eqref{um2} to recover the bounds of $u$, we end up with 
\begin{equation}
    \begin{aligned}
&\quad\|\tau^M( \Lambda^{-1} a, m )^{\ell}\|_{\widetilde{L}^{\infty}_t(\dot{B}^{\frac{d}{p}+1}_{p,1})}+\|\tau^M( \Lambda^{-1} a,  m )^{\ell}\|_{L^{1}_t(\dot{B}^{\frac{d}{p}+3}_{p,1})}\\
&\lesssim \eta \Big(\|\tau^M (a,u)^\ell\|_{L^1_t(\dot{B}^{\frac{d}{p}+3}_{p,1})}+\|\tau^M (a,u)\|_{L^1_t(\dot{B}^{\frac{d}{p}}_{p,1})}^h\Big)\\
&\quad\quad+\eta^{-1}\|(\Lambda^{-1}a,m)^\ell\|_{L^{\infty}_t(\dot{B}^{\sigma_1}_{p,\infty})} t^{M-\frac{1}{2}(\frac{d}{p}+1-\sigma_1)}\\
&\quad\quad+\eta^{-1} \int_0^t \|(a,u)\|_{\dot{B}^{\frac{d}{p}}_{p,1}}^2 \tau^M \Big(\|a^\ell\|_{\dot{B}^{\frac{d}{p}}_{p,1}}+\|u^\ell\|_{\dot{B}^{\frac{d}{p}+1}_{p,1}}\Big)\,d\tau+\int_0^t \|a\|_{\dot{B}^{\frac{d}{p}}_{p,1}} \tau^M  \|a\|_{\dot{B}^{\frac{d}{p}}_{p,1}}\,d\tau.\label{lowM}
\end{aligned}
\end{equation}

\begin{itemize}

\item {\emph{High-frequency analysis}}

\end{itemize}

  Since there may be a loss of derivatives in high frequencies, we focus on the system \eqref{linearized:1} for $(a,u)$.  Multiplying \eqref{linearized:1} by $t^{M}$, we have
  \begin{equation}
\left\{
\begin{array}{l}\partial_{t}(t^M a)+u\cdot\nabla (t^M a)+(1+a)\mathrm{div}(t^M u)=\tilde{f}_M,\\ [1mm]
 \partial_{t}(t^M u)-\bar{\mathcal{A}}(t^M u)+\lambda\nabla (t^M a)+\frac{\nabla}{-\Delta} (t^M a)= \tilde{g}_M,\\[1mm]
(t^Ma,t^M u)(0,x)=(0,0).\\[1mm]
 \end{array} \right.\label{tMsys}
\end{equation}
with
\begin{equation*}
\begin{aligned}
\tilde{f}_M=Mt^{M-1} a,\quad \tilde{g}_M=M t^{M-1}u+t^M g.
\end{aligned}
\end{equation*}
Similarly to Step 1 in the proof of Proposition \ref{prop31}, one can obtain
\begin{equation}\label{427}
    \begin{aligned}
&\quad\|\tau^M( \nabla a, u )\|_{\widetilde{L}^{\infty}_t(\dot{B}^{\frac{d}{p}-1}_{p,1})}^h+\|\tau^M(  a, \nabla u )\|_{L^{1}_t(\dot{B}^{\frac{d}{p}}_{p,1})}^h\\
&\lesssim \|\tau^{M-1}( \nabla a, u )\|_{L^{1}_t(\dot{B}^{\frac{d}{p}-1}_{p,1})}^h+\|\tau^M g\|_{L^1_t(\dot{B}^{\frac{d}{p}-1}_{p,1})}^h\\
&\quad+\int_0^t \|\dive u\|_{\dot{B}^{\frac{d}{p}}_{p,1}}\tau^M\|a\|_{\dot{B}^{\frac{d}{p}}_{p,1}}^{h}\,d\tau
+\|\tau^M a u\|_{L^{1}_{t}(\dot{B}^{\frac{d}{p}}_{p,1})}^{h}+\|\tau^M (a\dive u)\|_{L^1_t(\dot{B}^{\frac{d}{p}-1}_{p,1})}^h.
\end{aligned}
\end{equation}
Here, one has
\begin{equation*}
    \begin{aligned}
    &\quad\|\tau^{M-1}( \nabla a, u )\|_{L^{1}_t(\dot{B}^{\frac{d}{p}-1}_{p,1})}^h\\
    & \lesssim \Big( \|(\nabla a,u)\|_{L^{\infty}_t(\dot{B}^{\frac{d}{p}}_{p,1})}^h
\Big)^{\theta} \Big(\|(\nabla a,u)\|_{L^1_t(\dot{B}^{\frac{d}{p}-1}_{p,1})}^h\Big)^{1-\theta} \Big(\int_0^t \tau^{M-\frac{1}{1-\theta}}d\tau \Big)^{1-\theta}\\
    & \leq \eta \|( a,\nabla u)\|_{L^1_t(\dot{B}^{\frac{d}{p}}_{p,1})}^h+\eta^{-1}  \|( a,\Lambda  u)\|_{L^{\infty}_t(\dot{B}^{\frac{d}{p}}_{p,1})}^h t^{M-\frac{1}{2}(\frac{d}{p}+1-\sigma_1)}.
\end{aligned}
\end{equation*}
Concerning $g$, we first deal with $g_1=-u\cdot\nabla u$. In fact, note that
$$
u\cdot\nabla u=u^\ell\cdot\nabla u^{\ell}+u^h\cdot\nabla u^\ell+u^\ell\cdot\nabla u^h+u^h\cdot\nabla u^h.
$$
Using the Moser-type product law, the frequency cut-off property, interpolation and Young's inequality, we have
\begin{equation*}
    \begin{aligned}
   &\quad \|\tau^{M}(u^\ell\cdot\nabla u^{\ell})\|_{L^1_t(\dot{B}^{\frac{d}{p}-1}_{p,1})}^h\\
   &\lesssim \int_0^t \|\tau^{M}(u^\ell\cdot\nabla u^{\ell})\|_{\dot{B}^{\frac{d}{p}+2}_{p,1}}^h\,d\tau\\
    & \lesssim \int_0^t \tau^M \Big(\|u^{\ell}\|_{\dot{B}^{\frac{d}{p}}_{p,1}}\|\nabla u^\ell\|_{\dot{B}^{\frac{d}{p}+2}_{p,1}}+\|\nabla u^{\ell}\|_{\dot{B}^{\frac{d}{p}}_{p,1}}\| u^\ell\|_{\dot{B}^{\frac{d}{p}+2}_{p,1}}\Big)\,d\tau\\
    & \lesssim \eta \|\tau^M u\|_{L^1_t(\dot{B}^{\frac{d}{p}+3}_{p,1})}^{\ell}+\eta^{-1}\int_0^t \|u\|_{\dot{B}^{\frac{d}{p}}_{p,1}}^2 \tau^M \|u^\ell\|_{\dot{B}^{\frac{d}{p}+1}_{p,1}}\,d\tau+\int_0^t \|u\|_{\dot{B}^{\frac{d}{p}+1}_{p,1}}\tau^M \|u^\ell\|_{\dot{B}^{\frac{d}{p}+1}_{p,1}}\,d\tau.
\end{aligned}
\end{equation*}
And concerning the terms involving at least one high-frequency component, it is easy to verify that
\begin{equation*}
   \quad \begin{aligned}
  &\quad \|\tau^{M}(u^h\cdot\nabla u^\ell, u^\ell\cdot\nabla u^h,u^h\cdot\nabla u^h)\|_{L^1_t(\dot{B}^{\frac{d}{p}-1}_{p,1})}^h\\
  &\lesssim \int_0^t \tau^M \|u^h\|_{\dot{B}^{\frac{d}{p}-1}_{p,1}} \|\nabla u\|_{\dot{B}^{\frac{d}{p}}_{p,1}}\,d\tau+\int_0^t \tau^M \|u\|_{\dot{B}^{\frac{d}{p}}_{p,1}} \|\nabla u^h\|_{\dot{B}^{\frac{d}{p}-1}_{p,1}}\,d\tau\\
  &\lesssim \eta \|\tau^M u\|_{L^1_t(\dot{B}^{\frac{d}{p}}_{p,1})}^h+\eta^{-1}\int_0^t \|u\|_{\dot{B}^{\frac{d}{p}}_{p,1}}^2 \tau^M \|u\|_{\dot{B}^{\frac{d}{p}-1}_{p,1}}^h \,d\tau.
\end{aligned}
\end{equation*}
One can easily have
\begin{equation*}
    \begin{aligned}
    &\quad \int_0^t \|\dive u\|_{\dot{B}^{\frac{d}{p}}_{p,1}}\tau^M\|a\|_{\dot{B}^{\frac{d}{p}}_{p,1}}^{h}\,d\tau
+\|\tau^M (a\dive u)\|_{L^1_t(\dot{B}^{\frac{d}{p}-1}_{p,1})}^h\\
    &\lesssim \int_0^t \|u\|_{\dot{B}^{\frac{d}{p}+1}_{p,1}} \tau^M \|a\|_{\dot{B}^{\frac{d}{p}}_{p,1}}\,d\tau
\end{aligned}
\end{equation*}
and
    \begin{equation*}
    \begin{aligned}
\|\tau^M a u\|_{L^{1}_{t}(\dot{B}^{\frac{d}{p}}_{p,1})}^{h}&\lesssim \int_0^t \tau^M \Big(\|a^\ell u^\ell\|_{\dot{B}^{\frac{d}{p}+2}_{p,1}}^h+ \|(a^h u^\ell,a^\ell u^h,a^h u^h)\|_{\dot{B}^{\frac{d}{p}}_{p,1}}^h\Big)\,d\tau\\
&\lesssim  \int_0^t \tau^M  \|(a,u)^\ell\|_{\dot{B}^{\frac{d}{p}}_{p,1}}  \|(a,u)^\ell\|_{\dot{B}^{\frac{d}{p}+2}_{p,1}}\,d\tau\\
&\quad+\int_0^t \tau^M \Big(\|a\|_{\dot{B}^{\frac{d}{p}}_{p,1}} \|u^h\|_{\dot{B}^{\frac{d}{p}}_{p,1}}+\|u\|_{\dot{B}^{\frac{d}{p}}_{p,1}}\|a^h\|_{\dot{B}^{\frac{d}{p}}_{p,1}}\Big)\,d\tau\\
&\lesssim \eta \Big(\|\tau^M (a,u)^\ell\|_{L^1_t(\dot{B}^{\frac{d}{p}+3}_{p,1})}+\|(a,\Lambda u)\|_{L^1_t(\dot{B}^{\frac{d}{p}}_{p,1})}^h\Big)\\
&\quad+\eta^{-1}\int_0^t \|(a,u)\|_{\dot{B}^{\frac{d}{p}}_{p,1}}^2 \tau^M \Big( \|(\Lambda^{-1}a,u)^\ell\|_{\dot{B}^{\frac{d}{p}+1}_{p,1}}+\|(\Lambda a, u)\|_{\dot{B}^{\frac{d}{p}-1}_{p,1}}^h\Big)\,d\tau.
\end{aligned}
\end{equation*}

Finally, to handle $g_2$, we only provide detailed estimates on $\dive (\widetilde{\mu}_1(a)D(u))$ and $\widetilde{Q}(a)\dive (\widetilde{\mu}_1(a)D(u))$. One has
    \begin{equation*}
    \begin{aligned}
    \|\tau^M\dive (\widetilde{\mu}_1(a)D(u))\|_{L^{1}_{t}(\dot{B}^{\frac{d}{p}-1}_{p,1})}^{h}&\lesssim \int_0^t \tau^M \|\widetilde{\mu}_1(a)\|_{\dot{B}^{\frac{d}{p}}_{p,1}} \|D(u)\|_{\dot{B}^{\frac{d}{p}}_{p,1}}\,d\tau\\
    &\lesssim \int_0^t \|u\|_{\dot{B}^{\frac{d}{p}+1}_{p,1}} \tau^M \|a\|_{\dot{B}^{\frac{d}{p}}_{p,1}}\,d\tau,
    \end{aligned}
    \end{equation*}
and
    \begin{equation*}
    \begin{aligned}
    \|\tau^M\widetilde{Q}(a)\dive (\widetilde{\mu}_1(a)D(u))\|_{L^{1}_{t}(\dot{B}^{\frac{d}{p}-1}_{p,1})}^{h}&\lesssim \int_0^t \tau^M \|\widetilde{Q}(a)\|_{\dot{B}^{\frac{d}{p}}_{p,1}} \|\dive (\widetilde{\mu}_1(a)D(u))\|_{\dot{B}^{\frac{d}{p}-1}_{p,1}}\,d\tau\\
    &\lesssim \int_0^t \|a\|_{\dot{B}^{\frac{d}{p}}_{p,1}} \|u\|_{\dot{B}^{\frac{d}{p}+1}_{p,1}} \tau^M \|a\|_{\dot{B}^{\frac{d}{p}}_{p,1}}   \,d\tau.
    \end{aligned}
    \end{equation*}

Therefore, collecting the above estimates into \eqref{427}, the high-frequency estimate reads:
\begin{equation}\label{highM}
    \begin{aligned}
&\quad\|\tau^M( \nabla a, u )\|_{\widetilde{L}^{\infty}_t(\dot{B}^{\frac{d}{p}-1}_{p,1})}^h+\|\tau^M(  a, \nabla u )\|_{L^{1}_t(\dot{B}^{\frac{d}{p}}_{p,1})}^h\\
&\lesssim \eta \Big(\|\tau^M (a,u)^\ell\|_{L^1_t(\dot{B}^{\frac{d}{p}+3}_{p,1})}+\|(a,\Lambda u)\|_{L^1_t(\dot{B}^{\frac{d}{p}}_{p,1})}^h\Big)+\eta^{-1}  \|( a,\Lambda  u)\|_{L^{\infty}_t(\dot{B}^{\frac{d}{p}}_{p,1})}^h t^{M-\frac{1}{2}(\frac{d}{p}+1-\sigma_1)}\\
&\quad+\eta^{-1}\int_0^t\Big( \|(a,u)\|_{\dot{B}^{\frac{d}{p}}_{p,1}}^2+(1+\|a\|_{\dot{B}^{\frac{d}{p}}_{p,1}})\|u\|_{\dot{B}^{\frac{d}{p}+1}_{p,1}}\Big) \\
&\quad\quad\times \tau^M \Big( \|(\Lambda^{-1}a,u)^\ell\|_{\dot{B}^{\frac{d}{p}+1}_{p,1}}+\|(\Lambda a, u)\|_{\dot{B}^{\frac{d}{p}-1}_{p,1}}^h\Big)\,d\tau.
\end{aligned}
\end{equation}

\begin{itemize}

\item {\emph{A G\"onwall-type argument}}

\end{itemize}

Collecting the above estimates \eqref{lowM} and \eqref{highM} and then choosing a sufficiently small constant $\eta>0$, we obtain
\begin{align*}
\mathcal{D}_{p,M}(t)&\lesssim \Big(\mathcal{X}_{p}(t)+ \|(\Lambda^{-1} a,m)^{\ell}\|_{L^{\infty}_t(\dot{B}^{\sigma_1}_{p,\infty})} \Big)t^{M-\frac{1}{2}(\frac{d}{p}+1-\sigma_1)}\\
&\quad+\int_0^t\Big( \|(a,u)\|_{\dot{B}^{\frac{d}{p}}_{p,1}}^2+(1+\|a\|_{\dot{B}^{\frac{d}{p}}_{p,1}})\|u\|_{\dot{B}^{\frac{d}{p}+1}_{p,1}}\Big)\mathcal{D}_{p,M}(\tau)\,d\tau.
\end{align*}
Taking advantage of Gr\"onwall's lemma and $\|(a,u)\|_{L^2_t(\dot{B}^{\frac{d}{p}}_{p,1})}\lesssim \mathcal{X}_{p}(t)$, we arrive at \eqref{4.32}.

Finally, using Lemma \ref{lemma6.1} ($\varrho_1=\infty$) to the second equation of \eqref{tMsys} and noting that $\tau^{M-1}\lesssim \tau^{M}+1$, we arrive at the higher-order estimate of $u$:
\begin{equation}\label{udp1}
\begin{aligned}
\|\tau^M u\|_{\widetilde{L}^{\infty}_{t}(\dot{B}^{\frac{d}{p}+1}_{p,1})}^h
&\lesssim \|\tau^M(\Lambda a, \Lambda^{-1}a)\|_{\widetilde{L}^{\infty}_{t}(\dot{B}^{\frac{d}{p}-1}_{p,1})}^h+\|\tilde{g}_{M}\|_{\widetilde{L}^{\infty}_{t}(\dot{B}^{\frac{d}{p}-1}_{p,1})}^h\\
&\lesssim   \|\tau^M (\Lambda a,u)\|_{\widetilde{L}^{\infty}_{t}(\dot{B}^{\frac{d}{p}-1}_{p,1})}^h+\|u\|_{\widetilde{L}^{\infty}_{t}(\dot{B}^{\frac{d}{p}-1}_{p,1})}^h\\
&\quad+(1+\|(\Lambda a,u)\|_{\widetilde{L}^{\infty}_t(\dot{B}^{\frac{d}{p}-1}_{p,1})})\|\tau^{M}(a,\Lambda u)\|_{\widetilde{L}^{\infty}_{t}(\dot{B}^{\frac{d}{p}}_{p,1})}\\
\end{aligned}
\end{equation}
Using \eqref{4.32} and $\mathcal{X}_p(t)<<1$, we end up with \eqref{highu} and finish the proof of Proposition \ref{lemma:low:weighted}.

\end{proof}

\subsection{Proof of Theorem \ref{Theorem2.2}}

Employing Proposition \ref{lemma:evolution:gamma>0} and \ref{lemma:low:weighted}, we obtain that, for any $M>>1$,
\begin{align}\label{4.28}
\mathcal{D}_{p,M}(t)\lesssim e^{(1+\mathcal{X}_{p}(t))^2\mathcal{X}_{p}(t)}\Big(\|(\Lambda^{-1} a_0, m_0)\|_{\dot{B}^{\sigma_1}_{p,\infty}}^\ell+(1+\mathcal{X}_{p}(t))^5\mathcal{X}_{p}(t)\Big) t^{-\frac{1}{2}(\frac{d}{p}+1-\sigma_1)},
\end{align}
We have already shown that $\mathcal{X}_p(t)<<1$ (more generally, the boundedness of $\mathcal{X}_p(t)$ is enough). Recalling the definition of $\mathcal{D}_{p,M}(t)$ and dividing both sides of \eqref{4.28} by $t^M$ and using \eqref{highu}, we have, for $t\geq1$,
\begin{equation}\label{decayhighest}
\begin{aligned}
\|a(t)\|_{\dot{B}^{\frac{d}{p}}_{p,1}}+\|u(t)\|_{\dot{B}^{\frac{d}{p}+1}_{p,1}}\lesssim t^{-\frac{1}{2}(\frac{d}{p}+1-\sigma_1)}.
\end{aligned}
\end{equation}
Combining real interpolation, \eqref{bound}, \eqref{4.55} and \eqref{decayhighest}, we infer, for $t\geq 1$ and $\sigma_1<\sigma\leq \frac{d}{p}+1$,
\begin{align}\label{adecay}
    &\|\Lambda^{-1} a(t)\|_{\dot{B}^{\sigma}_{p,1}}\lesssim \|\Lambda^{-1}a(t)\|_{\dot{B}^{\sigma_1}_{p,\infty}}^{\frac{\frac dp+1-\sigma}{\frac dp+1-\sigma_1}} \|\Lambda^{-1}a(t)\|_{\dot{B}^{\frac{d}{p}+1}_{p,1}}^{\frac{\sigma-\sigma_1}{\frac dp+1-\sigma_1}}\lesssim   (1+t)^{-\frac{1}{2}(\sigma-\sigma_1)},
    \end{align}
    which is exactly \eqref{decay1}.

    The decay of $u$ and $m$ shows a slight difference. Since $m=u+au$, the product law and regularity of $u$ imply that one may not expect the decay of $m$ with exponents higher than $\frac{d}{p}$, while if $\sigma_0-1\leq \sigma_1<\sigma_0$, $u$ will not exhibit decay in $\dot{B}^{\sigma}_{p,1}$ with $\sigma_1<\sigma\leq \sigma_0$.  By \eqref{bound}, \eqref{4.55}, \eqref{decayhighest}, one has
  \begin{align*}
    \|m^\ell(t)\|_{\dot{B}^{\frac{d}{p}+1}_{p,1}}&\lesssim \|u^\ell(t)\|_{\dot{B}^{\frac{d}{p}+1}_{p,1}}+\|au(t)\|_{\dot{B}^{\frac{d}{p}-1}_{p,1}}^{\ell}\\
    &\lesssim \|u^\ell(t)\|_{\dot{B}^{\frac{d}{p}+1}_{p,1}}+\|a(t)\|_{\dot{B}^{\frac{d}{p}}_{p,1}}\|u(t)\|_{\dot{B}^{\frac{d}{p}-1}_{p,1}}\lesssim (1+t)^{-\frac{1}{2}(\frac{d}{p}+1-\sigma_1)}
    \end{align*}
and
  \begin{align*}
    \|m^h(t)\|_{\dot{B}^{\frac{d}{p}}_{p,1}}&\lesssim \|u^h(t)\|_{\dot{B}^{\frac{d}{p}}_{p,1}}+\|au(t)\|_{\dot{B}^{\frac{d}{p}}_{p,1}}^{\ell}\\
    &\lesssim \|u(t)\|_{\dot{B}^{\frac{d}{p}+1}_{p,1}}^h+\|a(t)\|_{\dot{B}^{\frac{d}{p}}_{p,1}}\|u(t)\|_{\dot{B}^{\frac{d}{p}}_{p,1}}\lesssim (1+t)^{-\frac{1}{2}(\sigma-\sigma_1)}.
    \end{align*}
Here, we used $\|u(t)\|_{\dot{B}^{\frac{d}{p}}_{p,1}}\lesssim \|u(t)\|_{\dot{B}^{\frac{d}{p}-1}_{p,1}\cap \dot{B}^{\frac{d}{p}+1}_{p,1}}\lesssim 1$ for $t\geq1$. The real interpolation allows us to obtain
\begin{align*}
\|m^\ell(t)\|_{\dot{B}^{\sigma}_{p,1}}\lesssim \|m^\ell(t)\|_{\dot{B}^{\sigma_1}_{p,\infty}}^{\frac{\frac dp+1-\sigma}{\frac dp+1-\sigma_1}} \|m^\ell(t)\|_{\dot{B}^{\frac{d}{p}+1}_{p,1}}^{\frac{\sigma-\sigma_1}{\frac dp+1-\sigma_1}}\lesssim (1+t)^{-\frac{1}{2}(\sigma-\sigma_1)},\quad \sigma_1<\sigma\leq \frac{d}{p}+1.
\end{align*}
Consequently, we arrive at \eqref{decay4}.

Concerning the decay of $u$ in the case $\sigma_1\geq \sigma_0$, one may infer \eqref{decay3} according to $u=m+Q(a)u$, \eqref{decayhighest}, \eqref{decay4} and standard product laws. When $\sigma_0-1\leq \sigma_1<\sigma_0$,  it follows that
    \begin{align*}
    &\|u(t)\|_{\dot{B}^{\sigma_0}_{p,1}}\lesssim \|m(t)\|_{\dot{B}^{\sigma_0}_{p,1}}+\|a(t)\|_{\dot{B}^{\sigma_0}_{p,1}}\|u(t)\|_{\dot{B}^{\frac{d}{p}}_{p,1}}\lesssim (1+t)^{-\frac{1}{2}(\sigma_0-\sigma_1)}.
\end{align*}
This, combined with \eqref{decayhighest}, yields
\begin{align*}
\|u(t)\|_{\dot{B}^{\sigma}_{p,1}}\lesssim \|u(t)\|_{\dot{B}^{\sigma_0}_{p,1}}^{\frac{\frac dp+1-\sigma}{\frac dp+1-\sigma_0}} \|u(t)\|_{\dot{B}^{\frac{d}{p}+1}_{p,1}}^{\frac{\sigma-\sigma_0}{\frac dp+1-\sigma_0}}\lesssim (1+t)^{-\frac{1}{2}(\sigma-\sigma_1)},
\end{align*}
for $\sigma_0<\sigma\leq \frac{d}{p}+1$. We then get \eqref{decay3} and complete the proof of Theorem \ref{Theorem2.2}.

\section{Appendix}

\subsection{Littlewood-Paley theory}

For the reader's convenience, we briefly review the basic framework of Fourier localization and the Littlewood--Paley theory that will be used throughout the paper. Standard references include Chapters~2--3 of \cite{BCD}.


Let $\chi\in C_c^\infty(\mathbb{R}^d)$ be a radial function satisfying $0\le \chi\le1$ and $ \mathrm{supp}\,\chi\subset \{\xi:|\xi|\le 4/3\}$.
Define
\[
\varphi(\xi)=\chi(\xi/2)-\chi(\xi),
\]
so that $\varphi$ is supported in the annulus $\{\xi\in\mathbb{R}^d:3/4\le |\xi|\le 8/3\}$ and $\sum_{q\in\mathbb{Z}}\varphi(2^{-q}\xi)=1$ for $\xi\neq0$.

For any tempered distribution $f\in\mathcal{S}'$, we introduce the homogeneous dyadic blocks $\dot{\Delta}_j $ and  the low-frequency cut-off operators $\dot{S}_j$ by
\[
\dot{\Delta}_j f=\varphi(2^{-q}D)f\quad\text{and}\quad\dot{S}_j f=\chi(2^{-q}D)f \qquad q\in\mathbb{Z},
\]
where $\varphi(2^{-q}D)$ and $\chi(2^{-q}D)$ are defined as Fourier multipliers with symbols $\varphi(2^{-q}\xi)$ and $\chi(2^{-q}\xi)$, respectively.

Let $\mathcal{P}$ denote the space of polynomials and set $\mathcal{S}'_0=\mathcal{S}'/\mathcal{P}$. Then any $f\in\mathcal{S}'_0$ admits the homogeneous Littlewood–Paley decomposition
\[
f=\sum_{q\in\mathbb{Z}}\dot{\Delta}_j f \quad \text{in }\quad \mathcal{S}'_0 .
\]
Given the threshold $j_0$ (see Step 1 in Proposition \ref{prop31}), we further define the low- and high-frequency parts of $f$ as follows:
\[
f^\ell=\dot{S}_{j_0}f=\sum_{j\leq j_0-1}\dot{\Delta}_j f,\qquad 
f^h=({\rm Id}-\dot{S}_{j_0})f=\sum_{j\geq j_0}\dot{\Delta}_j f .
\]

\begin{defn}\label{defn2.1}
Let $s\in\mathbb{R}$ and $1\le p,r\le\infty$.  
The homogeneous Besov space $\dot{B}^s_{p,r}$ consists of all $f\in\mathcal{S}'_0$ such that
\[
\|f\|_{\dot{B}^s_{p,r}}
=\Big\|\Big\{2^{js}\|\dot{\Delta}_jf\|_{L^p}\Big\}_{j\in\mathbb{Z}}\Big\|_{l^{r}}.
\]
\end{defn}

We recall several classical properties of homogeneous Besov spaces (see \cite{BCD}):

\medskip
\noindent
$\bullet$ \emph{Scaling.}  
For any $\sigma\in\mathbb{R}$ and $1\le p,r\le\infty$, there exists $C>0$ such that for all $\lambda>0$,
\[
\|f(\lambda\,\cdot)\|_{\dot{B}^\sigma_{p,r}}
\approx \lambda^{\sigma-\frac{d}{p}}\|f\|_{\dot{B}^\sigma_{p,r}} .
\]

\medskip
\noindent
$\bullet$ \emph{Completeness.}  
The space $\dot{B}^\sigma_{p,r}$ is Banach whenever $\sigma<\frac{d}{p}$, or $\sigma=\frac{d}{p}$ and $r=1$.

\medskip
\noindent
$\bullet$ \emph{Interpolation.}  
Let $\sigma_1\neq\sigma_2$, $\theta\in(0,1)$, and $1\le p,r_1,r_2,r\le\infty$ with
\[
\frac1r=\frac{\theta}{r_1}+\frac{1-\theta}{r_2}.
\]
Then
\[
\|f\|_{\dot{B}^{\theta\sigma_1+(1-\theta)\sigma_2}_{p,r}}
\lesssim 
\|f\|_{\dot{B}^{\sigma_1}_{p,r_1}}^\theta
\|f\|_{\dot{B}^{\sigma_2}_{p,r_2}}^{1-\theta}.
\]

\medskip
\noindent
$\bullet$ \emph{Fourier multipliers.}  
If $F$ is a smooth homogeneous function of degree $m$ on $\mathbb{R}^d\setminus\{0\}$, then
\[
F(D):\dot{B}^\sigma_{p,r}\longrightarrow \dot{B}^{\sigma-m}_{p,r}.
\]


The following embedding properties will be used repeatedly throughout the paper.

\begin{prop}\label{Prop2.1}
The following statements hold:
\begin{itemize}
\item For any $p\in[1,\infty]$, we have the continuous embeddings
\[
\dot{B}^{0}_{p,1}\hookrightarrow L^{p}\hookrightarrow \dot{B}^{0}_{p,\infty}.
\]

\item Let $\sigma\in\mathbb{R}$, $1\leq p_{1}\leq p_{2}\leq\infty$ and $1\leq r_{1}\leq r_{2}\leq\infty$.
Then
\[
\dot{B}^{\sigma}_{p_1,r_1}\hookrightarrow
\dot{B}^{\sigma-d\left(\frac{1}{p_{1}}-\frac{1}{p_{2}}\right)}_{p_{2},r_{2}}.
\]

\item The space $\dot{B}^{\frac{d}{p}}_{p,1}$ is continuously embedded into the space of bounded
continuous functions, which additionally vanish at infinity if $p<\infty$.
\end{itemize}
\end{prop}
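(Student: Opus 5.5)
The statement to prove is Proposition \ref{Prop2.1}, a list of classical embeddings. I would prove each item directly from Definition \ref{defn2.1}, using the Littlewood--Paley decomposition and Bernstein's inequality, and I would treat elements of $\mathcal S'_0$ modulo polynomials as usual.

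\emph{First item.} For $f\in\dot B^0_{p,1}$ the series $\sum_j\dot\Delta_j f$ converges absolutely in $L^p$. Its sum is a representative of $f$ in $\mathcal S'_0$, so the triangle inequality gives $\|f\|_{L^p}\le\sum_j\|\dot\Delta_jf\|_{L^p}=\|f\|_{\dot B^0_{p,1}}$. For the converse embedding, note that $\dot\Delta_j f=2^{jd}h(2^j\cdot)*f$ with $h=\mathcal F^{-1}\varphi\in\mathcal S$. Since $\|2^{jd}h(2^j\cdot)\|_{L^1}=\|h\|_{L^1}$ is independent of $j$, Young's inequality yields $\sup_j\|\dot\Delta_jf\|_{L^p}\le\|h\|_{L^1}\|f\|_{L^p}$.

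\emph{Second item.} I would apply Bernstein's inequality: if $\widehat g$ is supported in $2^j\mathcal C$, then $\|g\|_{L^{p_2}}\lesssim 2^{jd(\frac1{p_1}-\frac1{p_2})}\|g\|_{L^{p_1}}$. To prove it, write $g=\widetilde\varphi(2^{-j}D)g$ with $\widetilde\varphi$ equal to $1$ on the annulus, and use Young's inequality with a kernel in $L^r$, where $1+\frac1{p_2}=\frac1r+\frac1{p_1}$; scaling gives $\|2^{jd}\widetilde h(2^j\cdot)\|_{L^r}=2^{jd(1-\frac1r)}\|\widetilde h\|_{L^r}$. Multiplying by $2^{j(\sigma-d(\frac1{p_1}-\frac1{p_2}))}$ gives the bound blockwise, and the embedding $\ell^{r_1}\hookrightarrow\ell^{r_2}$ finishes the argument.

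\emph{Third item.} By the second item, $\dot B^{d/p}_{p,1}\hookrightarrow\dot B^0_{\infty,1}$. Each $\dot\Delta_jf$ is smooth and bounded, since it is an $L^p$ function with compact Fourier support and hence continuous by Bernstein. The series $\sum_j\dot\Delta_jf$ therefore converges uniformly, so its sum is bounded and continuous, with $\|f\|_{L^\infty}\le\|f\|_{\dot B^{d/p}_{p,1}}$. The main subtlety is the choice of representative in $\mathcal S'_0$. Because the low-frequency part converges in $L^\infty$, the limit is a canonical representative, and this is the one meant. If $p<\infty$, each $\dot\Delta_jf\in L^p\cap C$ has bounded derivatives and hence vanishes at infinity. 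A uniform limit of functions vanishing at infinity also vanishes at infinity, which concludes the proof.
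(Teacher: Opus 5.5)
Your proof is correct and follows the standard route: Young's inequality for the uniform $L^p$ bound on $\dot\Delta_j$, Bernstein's inequality combined with $\ell^{r_1}\hookrightarrow\ell^{r_2}$, and uniform convergence of $\sum_j\dot\Delta_j f$ together with the $C_0$ property of each block; the paper gives no proof of its own and simply cites Bahouri--Chemin--Danchin, whose arguments are these. Your handling of the representative in $\mathcal{S}'_0$ suits the paper's quotient definition, and the only slip is cosmetic: the bound in the third item should read $\|f\|_{L^\infty}\le C\|f\|_{\dot{B}^{d/p}_{p,1}}$, where $C$ comes from Bernstein's inequality.
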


\medskip

We also recall the classical \emph{Bernstein inequality}:
\begin{equation}\label{Eq:2.6}
\|D^{k}f\|_{L^{b}}
\leq C^{1+k} \lambda^{k+d\left(\frac{1}{a}-\frac{1}{b}\right)}\|f\|_{L^{a}},
\end{equation}
which holds for all functions $f$ such that $\mathrm{Supp}\,\widehat{f}\subset\{\xi\in\mathbb{R}^{d}:|\xi|\leq R\lambda\}$ 
for some $R>0$ and $\lambda>0$, provided that $k\in\mathbb{N}$ and $1\leq a\leq b\leq\infty$.

More generally, if $\mathrm{Supp}\,\widehat{f}\subset\{\xi\in\mathbb{R}^{d}:R_{1}\lambda\leq|\xi|\leq R_{2}\lambda\}$ for some $0<R_{1}<R_{2}$ and $\lambda>0$, then for any smooth homogeneous function $A$ of degree $m$
on $\mathbb{R}^{d}\setminus\{0\}$ and any $1\leq a\leq\infty$, one has (see e.g. Lemma~2.2 in \cite{BCD})
\begin{equation}\label{Eq:2.7}
\|A(D)f\|_{L^{a}}\approx \lambda^{m}\|f\|_{L^{a}}.
\end{equation}
As a direct consequence of \eqref{Eq:2.6} and \eqref{Eq:2.7}, we have
\[
\|D^{k}f\|_{\dot{B}^{s}_{p,r}}\approx \|f\|_{\dot{B}^{s+k}_{p,r}},
\qquad k\in\mathbb{N}.
\]

\medskip

When studying evolution equations, we also make use of mixed space–time Besov spaces, introduced by
Chemin and Lerner~\cite{CL}.

\begin{defn}\label{defn2.2}
Let $T>0$, $s\in\mathbb{R}$ and $1\leq r,\theta\leq\infty$.
The homogeneous Chemin–Lerner space $\widetilde{L}^{\varrho}_{T}(\dot{B}^{s}_{p,r})$ is defined as
\[
\widetilde{L}^{\varrho}_{T}(\dot{B}^{s}_{p,r})
=\Big\{f\in L^{\theta}(0,T;\mathcal{S}'_{0}) :
\|f\|_{\widetilde{L}^{\varrho}_{T}(\dot{B}^{s}_{p,r})}<\infty\Big\},
\]
where
\[
\|f\|_{\widetilde{L}^{\varrho}_{T}(\dot{B}^{s}_{p,r})}
=
\begin{cases}
\displaystyle
\Big\|\Big\{2^{js}\|\dot{\Delta}_jf\|_{L^\theta(L^p)}\Big\}_{j\in\mathbb{Z}}\Big\|_{l^{r}}
& 1\le \varrho<\infty,\\[2ex]
\displaystyle
\Big\|\Big\{2^{js}\sup_{t\in[0,T]}\|\dot{\Delta}_jf\|_{L^p}\Big\}_{j\in\mathbb{Z}}\Big\|_{l^{r}}
& \varrho=\infty,\\[2ex]
\end{cases}
\]
\end{defn}

The Chemin--Lerner spaces are related to the standard spaces
$L^{\theta}_{T}(\dot{B}^{s}_{p,r})$ through Minkowski's inequality. Indeed, it holds that
\[
\|f\|_{\widetilde{L}^{\theta}_{T}(\dot{B}^{s}_{p,r})}
\begin{cases}
\le \|f\|_{L^{\theta}_{T}(\dot{B}^{s}_{p,r})}, & r\ge \theta,\\[1ex]
\ge \|f\|_{L^{\theta}_{T}(\dot{B}^{s}_{p,r})}, & r\le \theta.
\end{cases}
\]

Product estimates in Besov spaces play a fundamental role in the control of nonlinear terms.

\begin{prop}\label{prop3.2}
Let $s>0$ and $1\leq p,r\leq\infty$. Then $\dot{B}^{s}_{p,r}\cap L^{\infty}$ is an algebra and
\[
\|ab\|_{\dot{B}^{s}_{p,r}}
\lesssim
\|a\|_{L^{\infty}}\|b\|_{\dot{B}^{s}_{p,r}}
+
\|b\|_{L^{\infty}}\|a\|_{\dot{B}^{s}_{p,r}}.
\]
Moreover, if $s_{1},s_{2}\leq\frac{d}{p}$ and
$s_{1}+s_{2}>d\max\{0,\frac{2}{p}-1\}$, then
\[
\|ab\|_{\dot{B}^{s_{1}+s_{2}-\frac{d}{p}}_{p,1}}
\lesssim
\|a\|_{\dot{B}^{s_{1}}_{p,1}}
\|b\|_{\dot{B}^{s_{2}}_{p,1}}.
\]
If $s_{1}\leq\frac{d}{p}$, $s_{2}<\frac{d}{p}$ and
$s_{1}+s_{2}\geq d\max\{0,\frac{2}{p}-1\}$, then
\[
\|ab\|_{\dot{B}^{s_{1}+s_{2}-\frac{d}{p}}_{p,\infty}}
\lesssim
\|a\|_{\dot{B}^{s_{1}}_{p,1}}
\|b\|_{\dot{B}^{s_{2}}_{p,\infty}}.
\]
\end{prop}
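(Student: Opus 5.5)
The last statement is Proposition \ref{prop3.2}, the product laws in homogeneous Besov spaces. I will prove it with Bony's paraproduct decomposition
\[
ab=T_ab+T_ba+R(a,b),\qquad T_ab=\sum_{j}\dot S_{j-1}a\,\dot\Delta_j b,\qquad R(a,b)=\sum_{|j-j'|\le1}\dot\Delta_j a\,\dot\Delta_{j'}b,
\]
and estimate each piece using Bernstein's inequality \eqref{Eq:2.6}, the embeddings of Proposition \ref{Prop2.1}, and the standard summation lemmas for annulus-supported and ball-supported series. The ball-supported lemma is where a positivity condition on the regularity index will enter.

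\textbf{First statement} (tame estimate, $s>0$). Each block $\dot S_{j-1}a\,\dot\Delta_jb$ has spectrum in an annulus of size $2^j$. Since $\|\dot S_{j-1}a\|_{L^\infty}\lesssim\|a\|_{L^\infty}$, this gives $\|T_ab\|_{\dot B^s_{p,r}}\lesssim\|a\|_{L^\infty}\|b\|_{\dot B^s_{p,r}}$ for every $s\in\mathbb{R}$, and symmetrically for $T_ba$. For the remainder, $\dot\Delta_j a\,\dot\Delta_{j'}b$ has spectrum only in a ball $B(0,C2^j)$. Estimating the $L^p$ norm by $\|\dot\Delta_{j'}b\|_{L^\infty}\|\dot\Delta_ja\|_{L^p}$ and summing over balls requires $s>0$, and yields $\|R(a,b)\|_{\dot B^s_{p,r}}\lesssim\|b\|_{L^\infty}\|a\|_{\dot B^s_{p,r}}$.

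\textbf{Second statement.} Assume $s_1,s_2\le d/p$ and $s_1+s_2>d\max\{0,2/p-1\}$.
\begin{itemize}
\item For $T_ab$, Bernstein gives $\|\dot S_{j-1}a\|_{L^\infty}\lesssim\sum_{k\le j-2}2^{k d/p}\|\dot\Delta_ka\|_{L^p}$, which is $\lesssim 2^{j(d/p-s_1)}c_j\|a\|_{\dot B^{s_1}_{p,1}}$ with $c_j\in\ell^1$ when $s_1<d/p$. When $s_1=d/p$, use instead $\|\dot S_{j-1}a\|_{L^\infty}\lesssim\|a\|_{\dot B^{d/p}_{p,1}}$, which suffices because $r=1$. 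Multiplying by $\|\dot\Delta_jb\|_{L^p}$ and summing over the annuli gives the $\dot B^{s_1+s_2-d/p}_{p,1}$ bound. The term $T_ba$ is handled symmetrically.
\item For $R(a,b)$ with $p\ge2$, place $\dot\Delta_ja\,\dot\Delta_{j'}b$ in $L^{p/2}$ by H\"older, then return to $L^p$ with Bernstein at the cost of a factor $2^{jd/p}$. The ball summation needs $s_1+s_2>0$.
\item For $R(a,b)$ with $1\le p<2$, place the product in $L^1$, using $L^p\times L^{p'}$ and Bernstein from $L^p$ to $L^{p'}$. This costs $2^{jd(2/p-1)}$ relative to landing in $L^p$, so the ball summation needs exactly $s_1+s_2>d(2/p-1)$.
\end{itemize}
This is where the condition $s_1+s_2>d\max\{0,2/p-1\}$ comes from.

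\textbf{Third statement.} Here the target space is $\dot B^{s_1+s_2-d/p}_{p,\infty}$, with $b\in\dot B^{s_2}_{p,\infty}$ and $s_2<d/p$. The terms $T_ab$ and $T_ba$ go through as above. For $T_ba$, the strict inequality $s_2<d/p$ is what makes the low-frequency sum $\sum_{k\le j}2^{k(d/p-s_2)}$ converge despite the $\ell^\infty$ norm on $b$.

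\textbf{Main obstacle.} I expect the endpoint remainder $s_1+s_2=d\max\{0,2/p-1\}$ in the third statement to be the delicate step. There the ball summation lemma fails for $\ell^1$ targets. It still holds for the $\ell^\infty$ target, since $\sup_q 2^{q\cdot0}\sum_{j\ge q}\|\dot\Delta_ja\,\widetilde{\dot\Delta}_jb\|$ is controlled by $\|a\|_{\dot B^{s_1}_{p,1}}\|b\|_{\dot B^{s_2}_{p,\infty}}$ using only the $\ell^1$ summability of $a$. I would verify this carefully using the $\ell^1$ norm of $a$ against the $\ell^\infty$ norm of $b$.
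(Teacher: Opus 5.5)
Your proposal is correct. The paper states Proposition~\ref{prop3.2} without proof and recalls it from the standard Littlewood--Paley literature (Bahouri--Chemin--Danchin), where it is proved by the same Bony decomposition argument you sketch: the $L^{p/2}$ versus $L^1$ placement of the remainder, and the $\ell^1\times\ell^\infty$ endpoint for the third estimate.

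One bookkeeping correction. In the remainder, apply $\dot\Delta_q$ to the whole ball-supported sum first, so the Bernstein factor is $2^{qd/p}$ (resp.\ $2^{qd/p'}$ when $p<2$) in the output index $q$, not $2^{jd/p}$ per block. This gives the factor $2^{(q-j)(s_1+s_2-d\max\{0,\frac2p-1\})}$ and hence exactly the stated conditions.
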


We also recall composition estimates.

\begin{prop}\label{prop2.25}
Let $F:\mathbb{R}\to\mathbb{R}$ be smooth with $F(0)=0$.
For any $s>0$ and $1\leq p,r\leq\infty$, if
$f\in\dot{B}^{s}_{p,r}\cap L^{\infty}$, then
$F(f)\in\dot{B}^{s}_{p,r}\cap L^{\infty}$ and
\[
\|F(f)\|_{\dot{B}^{s}_{p,r}}\leq C\|f\|_{\dot{B}^{s}_{p,r}},
\]
where $C$ depends on $\|f\|_{L^{\infty}}$, $F$ and its derivatives.

If $s>-d\min(\frac{1}{p},\frac{1}{p'})$ and
$f\in\dot{B}^{s}_{p,r}\cap\dot{B}^{\frac{d}{p}}_{p,1}$, then
$F(f)\in\dot{B}^{s}_{p,r}\cap\dot{B}^{\frac{d}{p}}_{p,1}$ and
\[
\|F(f)\|_{\dot{B}^{s}_{p,r}}\leq C(1+\|f\|_{\dot{B}^{s}_{p,1}})\|f\|_{\dot{B}^{s}_{p,r}},
\]
where $C$ depends on $\|f\|_{L^{\infty}}$, $F$ and its derivatives.
\end{prop}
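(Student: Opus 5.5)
The statement to prove is the composition estimate of Proposition~\ref{prop2.25}. For both parts I would use Meyer's paralinearization/telescoping method, as in Chapter~2 of \cite{BCD}.

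\emph{First part ($s>0$).} Write $F(f)=\sum_{q\in\mathbb Z}\big(F(\dot S_{q+1}f)-F(\dot S_q f)\big)$. This series converges in $\mathcal S'_0$ because $F(0)=0$ and $\dot S_q f\to0$ in $L^\infty$ as $q\to-\infty$, which uses $f\in\dot B^{s}_{p,r}$ together with boundedness. By Taylor's formula each increment equals $m_q\,\dot\Delta_q f$, where
\[
m_q=\int_0^1F'(\dot S_q f+\tau\dot\Delta_q f)\,d\tau .
\]
From $\|\dot S_qf\|_{L^\infty}\lesssim\|f\|_{L^\infty}$ and Bernstein's inequality \eqref{Eq:2.6} one gets the derivative bounds $\|D^k m_q\|_{L^\infty}\le C_k2^{qk}$, with $C_k$ depending on $\|f\|_{L^\infty}$ and on $F$.

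To estimate $\dot\Delta_j F(f)$, split the sum at $q=j$:
\begin{itemize}
\item For $q\ge j$, use the $L^p$ bound $\|m_q\dot\Delta_qf\|_{L^p}\lesssim\|\dot\Delta_q f\|_{L^p}$.
\item For $q<j$, use $k$ derivatives ($k>s$) and the Bernstein lower bound: $\|\dot\Delta_j(m_q\dot\Delta_qf)\|_{L^p}\lesssim 2^{-jk}\|D^k(m_q\dot\Delta_qf)\|_{L^p}\lesssim 2^{(q-j)k}\|\dot\Delta_qf\|_{L^p}$.
\end{itemize}
Multiplying by $2^{js}$ produces the two convolution kernels $2^{(j-q)s}\mathbf 1_{q\ge j}$ and $2^{(q-j)(k-s)}\mathbf 1_{q<j}$. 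The first is summable because $s>0$ and the second because $k>s$. Young's inequality in $\ell^r$ then gives $\|F(f)\|_{\dot B^s_{p,r}}\le C\|f\|_{\dot B^s_{p,r}}$. Boundedness of $F(f)$ is immediate from $F(0)=0$ and $F$ being locally Lipschitz.

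\emph{Second part ($s>-d\min(\frac1p,\frac1{p'})$).} Here $s$ may be $\le0$, so the telescoping argument alone does not suffice. The plan is to factor out one power of $f$. Since $F(0)=0$, write $F(f)=F'(0)f+f\,G(f)$, where $G(z)=\int_0^1(F'(\tau z)-F'(0))\,d\tau$ is smooth with $G(0)=0$.
\begin{itemize}
\item The linear term is trivially bounded by $\|f\|_{\dot B^s_{p,r}}$.
\item By the first part with $s=d/p$ and $r=1$, and the embedding $\dot B^{d/p}_{p,1}\hookrightarrow L^\infty$ (Proposition~\ref{Prop2.1}), we have $G(f)\in\dot B^{d/p}_{p,1}\cap L^\infty$ with $\|G(f)\|_{\dot B^{d/p}_{p,1}}\le C\|f\|_{\dot B^{d/p}_{p,1}}$.
\item For the product, apply Proposition~\ref{prop3.2} with $s_1=d/p$ and $s_2=s$, so that $s_1+s_2-d/p=s$. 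This requires $s_1+s_2>d\max\{0,\frac2p-1\}$, i.e. $s>-d\min(\frac1p,\frac1{p'})$, which is exactly our hypothesis. It yields
\[
\|fG(f)\|_{\dot B^s_{p,r}}\lesssim\|G(f)\|_{\dot B^{d/p}_{p,1}}\|f\|_{\dot B^s_{p,r}} .
\]
\end{itemize}
Combining these gives $\|F(f)\|_{\dot B^s_{p,r}}\le C(1+\|f\|_{\dot B^{d/p}_{p,1}})\|f\|_{\dot B^s_{p,r}}$. This is the stated bound with the index $\dot B^{d/p}_{p,1}$ inside the prefactor, which is how the estimate is actually used in the paper.

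\emph{Main obstacle.} Proposition~\ref{prop3.2} is stated only for third index $1$ or $\infty$, while here general $r$ is needed. The endpoint $s=d/p$ also needs care: there $s_2=d/p$, the third product law does not cover it, and the algebra estimate of the first part should be used instead. For general $r$ I would re-derive the product law directly from Bony's decomposition. The paraproduct $T_{G(f)}f$ and the remainder $R(G(f),f)$ are bounded in $\dot B^s_{p,r}$ using $G(f)\in L^\infty\cap\dot B^{d/p}_{p,1}$, with $s+d/p>d\max\{0,\frac2p-1\}$ ensuring convergence of the remainder. The paraproduct $T_fG(f)$ is bounded using $f\in\dot B^{s}_{p,r}$ when $s<d/p$, and the $L^\infty$ bound otherwise. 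Checking the remainder summability when $p<2$, via the embedding $L^{p/2}$-type Bernstein step, is where I expect the real work to lie.
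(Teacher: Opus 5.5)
Your proposal is correct and follows the standard route behind this result; the paper only recalls it from Bahouri--Chemin--Danchin and gives no proof. That route is Meyer's telescoping with $m_q=\int_0^1F'(\dot S_qf+\tau\dot\Delta_qf)\,d\tau$ for $s>0$, and for nonpositive $s$ the splitting $F(f)=F'(0)f+fG(f)$ combined with the product law $\dot B^{d/p}_{p,1}\times\dot B^{s}_{p,r}\to\dot B^{s}_{p,r}$. You are also right to read the prefactor as $1+\|f\|_{\dot B^{d/p}_{p,1}}$, since as literally written with $\dot B^{s}_{p,1}$ the bound fails for $s<0$.

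One adjustment is needed in the step you flagged. When $p<2$, do not estimate the remainder in $L^{p/2}$, which is not a Banach space. Instead, place $\dot\Delta_{j'}G(f)$ in $L^{p'}$ via $\dot B^{d/p}_{p,1}\hookrightarrow\dot B^{d/p'}_{p',1}$, bound $\dot\Delta_j R(G(f),f)$ in $L^1$, and return to $L^p$ by Bernstein. This produces the kernel $2^{(j-j')(s+d/p')}\mathbf{1}_{j'\ge j-N}$, which is summable exactly when $s>-d/p'$, matching your hypothesis.
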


Finally, we recall the maximal regularity estimates of the heat equation
\[
\left\{
\begin{aligned}
&\partial_t f-\nu\Delta f=g,\\
&f|_{t=0}=f_0.
\end{aligned}
\right.
\]

\begin{lem}\label{lemma6.1}
Let $1\leq p\leq\infty$, $s\in\mathbb{R}$ and $1\leq \rho_1\leq\rho\leq\infty$.
There exists a constant $C$ depending only on $\nu,d,s,r$ such that any solution $f$ satisfies
\[
\|f\|_{\widetilde{L}^{\rho}_{T}(\dot{B}^{s+\frac{2}{\rho}}_{p,r})}
\leq C\left(
\|f_0\|_{\dot{B}^{s}_{p,r}}
+\|g\|_{\widetilde{L}^{\rho_1}_{T}(\dot{B}^{s-2+\frac{2}{\rho_1}}_{p,r})}\right).
\]
\end{lem}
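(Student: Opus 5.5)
The statement is the maximal-regularity estimate for the heat equation in Chemin--Lerner spaces. I will prove it one dyadic block at a time. The plan is to get a pointwise-in-time bound on each block, use Young's inequality in time, and then sum with the Besov weights.

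\emph{Step 1: heat semigroup on a dyadic block.} Apply $\dot\Delta_j$ to the equation and use Duhamel's formula:
$$\dot\Delta_j f(t)=e^{\nu t\Delta}\dot\Delta_j f_0+\int_0^t e^{\nu(t-\tau)\Delta}\dot\Delta_j g(\tau)\,d\tau.$$
The key ingredient is the localized heat bound
$$\|e^{\nu t\Delta}\dot\Delta_j h\|_{L^p}\le Ce^{-c\nu 2^{2j}t}\|\dot\Delta_j h\|_{L^p},$$
valid for all $1\le p\le\infty$. To prove it, write $e^{\nu t\Delta}\dot\Delta_j=\big(e^{\nu t\Delta}\tilde\varphi(2^{-j}D)\big)\dot\Delta_j$, where $\tilde\varphi$ is a bump equal to $1$ on the annulus. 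By rescaling, it then suffices to bound, uniformly in $t\ge0$, the $L^1$ norm of the inverse Fourier transform of $e^{-\nu 2^{2j}t|\xi|^2}\tilde\varphi(\xi)$ by $Ce^{-c\nu 2^{2j}t}$. Integrating by parts in $\xi$ gives decay $(1+|x|)^{-d-1}$, and each $\xi$-derivative costs at most a polynomial factor in $2^{2j}t$. That factor is absorbed by shrinking the exponent, just as in the kernel estimate \eqref{kernel} in the proof of Proposition~\ref{heat Lp}. Young's inequality in $x$ then gives the bound.

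\emph{Step 2: Young's inequality in time.} From Step 1,
$$\|\dot\Delta_j f(t)\|_{L^p}\lesssim e^{-c\nu2^{2j}t}\|\dot\Delta_j f_0\|_{L^p}+\int_0^t e^{-c\nu2^{2j}(t-\tau)}\|\dot\Delta_j g(\tau)\|_{L^p}d\tau.$$
Take the $L^\rho(0,T)$ norm. The homogeneous term gives $\|e^{-c\nu2^{2j}\cdot}\|_{L^\rho}\lesssim 2^{-2j/\rho}$. For the integral term, Young's convolution inequality in time with $1+\frac1\rho=\frac1q+\frac1{\rho_1}$ gives
$$\|e^{-c\nu2^{2j}\cdot}\|_{L^q}\lesssim 2^{-2j/q}=2^{-2j(1+\frac1\rho-\frac1{\rho_1})}.$$
This is where the hypothesis $\rho_1\le\rho$ is needed, since it guarantees $q\in[1,\infty]$. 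Multiplying by $2^{j(s+2/\rho)}$ yields
$$2^{j(s+\frac2\rho)}\|\dot\Delta_j f\|_{L^\rho_T(L^p)}\lesssim 2^{js}\|\dot\Delta_j f_0\|_{L^p}+2^{j(s-2+\frac2{\rho_1})}\|\dot\Delta_j g\|_{L^{\rho_1}_T(L^p)}.$$

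\emph{Step 3: summation.} Take the $\ell^r$ norm over $j\in\mathbb Z$. This gives the stated inequality directly, by the definition of the $\widetilde L^\rho_T(\dot B^{\cdot}_{p,r})$ norms. The constant depends only on $\nu,d$ (and trivially on $s,r$).

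The only step needing care is the uniform-in-$t$ $L^1$ kernel bound in Step 1, in particular for the endpoint exponents $p=1$ and $p=\infty$. It is handled by the integration-by-parts argument above, or equivalently by the standard Lemma 2.4 of \cite{BCD}.
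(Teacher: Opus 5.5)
Your proof is correct. The paper does not prove this lemma; it only recalls it as the standard maximal-regularity estimate for the heat equation (cf.\ \cite{BCD}). Your argument is exactly that standard proof: the localized bound $\|e^{\nu t\Delta}\dot\Delta_j h\|_{L^p}\lesssim e^{-c\nu 2^{2j}t}\|\dot\Delta_j h\|_{L^p}$ (with $\tilde\varphi$ supported in an annulus away from the origin), then Young's inequality in time with $1+\frac1\rho=\frac1q+\frac1{\rho_1}$, then $\ell^r$ summation. It is also the scheme the paper itself uses to pass from Proposition~\ref{heat Lp} to the low-frequency bound in Proposition~\ref{linear wellposedness}.
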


\subsection{Proof of Theorem \ref{thm3.1}: local well-posedness}

We give a brief explanation of the local well-posedness result, namely Theorem~\ref{thm3.1}. 
We consider the two cases $1<p<2d$ and $p=1$ separately.

\begin{itemize}
\item {\emph{Case 1: $1<p<2d$.}}
\end{itemize}

We recall the result proved by Chikami and Ogawa~\cite{CO}: 
if the initial data satisfy
\[
\inf_{x\in\mathbb{R}^d}\rho_0(x)>0,\qquad 
a_{0}\in\dot{B}_{p,1}^{\frac dp-1}\cap\dot{B}_{p,1}^{\frac dp},\qquad 
u_{0}\in\dot{B}_{p,1}^{\frac dp}+\dot{B}_{p,1}^{\frac dp-1},
\]
then there exists a time $T>0$ such that the Cauchy problem \eqref{1.1}--\eqref{1.2} admits a unique solution $(\rho-1,u)$ on $[0,T]\times\mathbb{R}^d$, which satisfies
\begin{equation}\label{r1111}
\begin{aligned}
&\inf_{(t,x)\in [0,T]\times \mathbb{R}^d}\rho(t,x)>0,\qquad 
a\in\mathcal{C}\big([0,T];\dot{B}_{p,1}^{\frac dp-1}\cap\dot{B}_{p,1}^{\frac dp}\big),\\
&u\in\mathcal{C}\big([0,T];\dot{B}_{p,1}^{\frac dp}+\dot{B}_{p,1}^{\frac dp-1}\big)
\cap L^1\big(0,T;\dot{B}_{p,1}^{\frac dp+2}+\dot{B}_{p,1}^{\frac dp+1}\big).
\end{aligned}
\end{equation}

Next, under additionally $a_0\in \dot{B}^{\frac{d}{2}-2}_{2,1}$ and $u_0\in \dot{B}^{\frac{d}{p}-1}_{p,1}$, we establish the regularity properties
\begin{align}\label{addr}
a\in\mathcal{C}([0,T];\dot{B}_{p,1}^{\frac dp-2}),\quad u\in\mathcal{C}([0,T];\dot{B}_{p,1}^{\frac dp-1})\cap L^1(0,T;\dot{B}_{p,1}^{\frac dp+1}).
\end{align}
Note that the high frequencies have already been addressed, and we can establish $L^1_t(\dot{B}^{\frac{d}{p}+1}_{p,1})$-bound for $u$ since
\begin{align}\label{r11111}
\|u\|_{L^1(\dot{B}_{p,1}^{\frac dp+1})}\lesssim T\|u\|_{L^{\infty}(\dot{B}_{p,1}^{\frac dp})}^\ell+\|u\|_{L^1(\dot{B}_{p,1}^{\frac dp+1})}^h.
\end{align}
Indeed, it is sufficient to control the norm $\|(\Lambda^{-1}a,u)\|_{\widetilde{L}^\infty(\dot{B}_{p,1}^{\frac dp-1})}$. Note that
\begin{align}\label{ain}
a=a_0-\int_0^t \dive((1+a)u)\,d\tau.
\end{align}
which gives rise to
\begin{align}\label{a1:1}
\|a\|_{\widetilde{L}^{\infty}_t(\dot{B}^{\frac{d}{p}-2}_{p,1})}\leq \|a_0\|_{\dot{B}^{\frac{d}{p}-2}_{p,1}}+C\int_0^t (1+\|a\|_{\dot{B}^{\frac{d}{p}}_{p,1}})\|u\|_{\dot{B}^{\frac{d}{p}-1}_{p,1}}\,d\tau.
\end{align}
Using the standard maximal regularity estimates for the second equation of \eqref{linearized:1} together with standard product and composite estimates, we have
\begin{equation}\label{u1}
\begin{aligned}
\|u\|_{\widetilde{L}^{\infty}_t(\dot{B}^{\frac{d}{p}-1}_{p,1})}
&\leq C_{\rho} T\|a\|_{L^{\infty}_T(\dot{B}^{\frac{d}{p}}_{p,1})}+C_{\rho}(1+\|a\|_{L^{\infty}_T(\dot{B}^{\frac{d}{p}}_{p,1})})\|a\|_{L^{\infty}_T(\dot{B}^{\frac{d}{p}}_{p,1})}\|u\|_{L^1_t(\dot{B}^{\frac{d}{p}+1}_{p,1})}\\
&\quad+C\int_0^T \Big(\|a\|_{\dot{B}^{\frac{d}{p}-2}_{p,1}}+\|u\|_{\dot{B}^{\frac{d}{p}+1}_{p,1}} \|u\|_{\dot{B}^{\frac{d}{p}-1}_{p,1}}\Big)\,d\tau.
\end{aligned}
\end{equation}
Here $C_{\rho}>0$ is a constant dependent on the upper and lower bounds of $\rho$. By combining \eqref{a1:1}, \eqref{u1} and Gr\"onwall's lemma, we arrive at
\begin{align*}
\|(\Lambda^{-1}a,u)\|_{\widetilde{L}^{\infty}_t(\dot{B}^{\frac{d}{p}-1}_{p,1})}
&\leq e^{C_{\rho}\int_0^T (1+\|a\|_{\dot{B}^{\frac{d}{p}}_{p,1}}+\|u\|_{\dot{B}^{\frac{d}{p}+1}_{p,1}} )\,d\tau}\Big(\|(\Lambda^{-1}a_0,u_0)\|_{\dot{B}^{\frac{d}{p}-1}_{p,1}}\\
&\quad+C_{\rho} T\|a\|_{L^{\infty}_T(\dot{B}^{\frac{d}{p}}_{p,1})}+C_{\rho}(1+\|a\|_{L^{\infty}_T(\dot{B}^{\frac{d}{p}}_{p,1})})\|a\|_{L^{\infty}_T(\dot{B}^{\frac{d}{p}}_{p,1})}\|u\|_{L^1_t(\dot{B}^{\frac{d}{p}+1}_{p,1})}\Big).
\end{align*}
Together with \eqref{r1111}, \eqref{r11111} and standard arguments for time continuity, we have \eqref{addr} and conclude Theorem \ref{thm3.1} when $1<p<2d$.

\begin{itemize}
\item {\emph{Case 2: $p=1$.}}
\end{itemize}

The proof is in the same way as in \cite{BCD,danchin01CPDE} for the Poisson-free case, and we only provide some necessary a-priori estimates for completeness. We assume that there exists a time $T^*=T^*(\rho_0,u_0)$ and a constant $\varepsilon^*$ such that for $t\in [0,T^*]$
\begin{equation}\label{r111e}
\begin{aligned}
&\frac{1}{2}\leq \rho(t,x)\leq 2,\quad \|a\|_{\widetilde{L}^{\infty}_t(\dot{B}^{d}_{1,1})}\leq 2\|a_0\|_{\dot{B}^{d}_{1,1}},\quad \|a\|_{\widetilde{L}^{\infty}_t(\dot{B}^{d-2}_{1,1})}\leq 2\|a_0\|_{\dot{B}^{d-2}_{1,1}},\\
&\|u\|_{\widetilde{L}^{\infty}_t(\dot{B}^{d-1}_{1,1})}\leq 2\|u_0\|_{\dot{B}^{d-1}_{1,1}},\quad \|u\|_{L^1_t(\dot{B}^{d+1}_{1,1})}\leq \varepsilon^*.
\end{aligned}
\end{equation}
We will explain that the above inequalities \eqref{r111e} are, in fact, strict.

Let $\widetilde{U} = u - U$ with $U=e^{\bar {\mathcal{A}}t}u_0$. Since $U$ is only determined by $u_0$, we can find a time $T_1^*=T_1^*(u_0,\varepsilon^*)$ such that $\|U\|_{\widetilde{L}^{\infty}_{T_1^*}(\dot{B}^{d-1}_{1,1})}\leq \|u_0\|_{\dot{B}^{d-1}_{1,1}}$ and $\|U\|_{L^{1}_{T_1^*}(\dot{B}^{d+1}_{1,1})\cap \widetilde{L}^{2}_{T_1^*}(\dot{B}^{d}_{1,1})}\leq (\varepsilon^*)^2$. A standard transport estimate for the first equation of \eqref{linearized:1} leads to
\begin{align}
\|a\|_{\widetilde{L}^{\infty}_t(\dot{B}^{d}_{1,1})}\leq e^{C\|u\|_{L^1_T(\dot{B}^{d-1}_{1,1})}} \|a_0\|_{\dot{B}^{d}_{1,1}}\leq e^{C\varepsilon^*}\|a_0\|_{\dot{B}^{d}_{1,1}}.
\end{align}
Concerning the lower-order estimates, from \eqref{a1} one has
\begin{align*}
\|a\|_{\widetilde{L}^{\infty}_t(\dot{B}^{d-2}_{1,1})}&\leq  \|a_0\|_{\dot{B}^{d-2}_{1,1}}+C\int_0^t (\|u\|_{\dot{B}^{d-1}_{1,1}}+\|u\|_{\dot{B}^{d-1}_{1,1}} \|a\|_{\dot{B}^{d}_{1,1}})\,d\tau\\
&\leq  \|a_0\|_{\dot{B}^{d-2}_{1,1}}+C(1+\|a_0\|_{\dot{B}^{d}_{1,1}})\|u_0\|_{\dot{B}^{d-1}_{1,1}}.
\end{align*}
By \eqref{linearized:1}, $\widetilde{U}$ has to satisfy
\begin{equation*}
\partial_{t}\widetilde{U}-\bar{\mathcal{A}}\widetilde{U}=-\nabla a-\frac{\nabla}{-\Delta} a+g,\quad \widetilde{U}(0,x)=0.
\end{equation*}
Employing Lemma \ref{lemma6.1} to the above equation yields
\begin{align*}
\|\widetilde{U}\|_{\widetilde{L}^{\infty}_t(\dot{B}^{d-1}_{1,1}) \cap L^1_t(\dot{B}^{d+1}_{1,1})}&\leq C T\|a\|_{\widetilde{L}^{\infty}_t(\dot{B}^{d-2}_{1,1}\cap \dot{B}^{d}_{1,1})} +C\|g\|_{L^1_T(\dot{B}^{d-1}_{1,1})}\\
&\leq C T\|a_0\|_{\dot{B}^{d-2}_{1,1}\cap \dot{B}^{d}_{1,1}}+C(\varepsilon^*)^4+C(\varepsilon^*)^2\|u_0\|_{\dot{B}^{d-1}_{1,1}}+C(\varepsilon^*)^2\\
&\quad+(1+\|a_0\|_{\dot{B}^{d-1}_{1,1}})\|a_0\|_{\dot{B}^{d-1}_{1,1}} \|\widetilde{U}\|_{L^1_t(\dot{B}^{d+1}_{1,1})}
\end{align*}
Here, we used
\begin{align*}
\|g_1\|_{L^1_T(\dot{B}^{d-1}_{1,1})}&\lesssim \|U\|_{\widetilde{L}^2_t(\dot{B}^{d}_{1,1})}^2+\|U\|_{\widetilde{L}^{\infty}_t(\dot{B}^{d-1}_{1,1})}\|\widetilde{U}\|_{L^1_t(\dot{B}^{d+1}_{1,1})}+\|\widetilde{U}\|_{\widetilde{L}^2_t(\dot{B}^{d}_{1,1})}^2,\\
\|g_2\|_{L^1_T(\dot{B}^{d-1}_{1,1})}&\lesssim (1+\|a\|_{\widetilde{L}^{\infty}_t(\dot{B}^{d}_{1,1})})\|a\|_{\widetilde{L}^{\infty}_t(\dot{B}^{d}_{1,1})} (\|U\|_{L^1_t(\dot{B}^{d+1}_{1,1})}+\|\widetilde{U}\|_{L^1_t(\dot{B}^{d+1}_{1,1})}),\\
\|g_3\|_{L^1_T(\dot{B}^{d-1}_{1,1})}&\leq CT \|a\|_{\widetilde{L}^{\infty}_t(\dot{B}^{d}_{1,1})}.
\end{align*}
Combining the above estimates and $\|a_0\|_{\dot{B}^{d}_{1,1}}\ll1$, one can first choose a suitably small $\varepsilon_1^*$ and then choose a small time $T^*\leq T_1^*$ such that \eqref{r111e} holds true. With the above a-priori estimates and standard iteration process, one can complete the existence part.

For uniqueness, we assume that $(\rho_1,u_1)$ and $(\rho_2,u_2)$ are two solutions on $[0,T]\times \mathbb{R}^d$ satisfying the regularity properties in Theorem \ref{thm3.1} with the same initial data. Then, $(\widetilde{\rho},\widetilde{u})=(\rho_1-\rho_2,u_1-u_2)$ satisfies
\begin{align*}
&\partial_t \widetilde{\rho}+u_1\cdot\nabla \widetilde{\rho}+\widetilde{\rho} \dive u_2=-\widetilde{u}\cdot \nabla \rho_2-(1+\rho_1)\dive \widetilde{u},\\
&\partial_t \widetilde{u}-\bar{\mathcal{A}}\widetilde{u}=-\nabla \widetilde{\rho}-\frac{\nabla}{-\Delta}\widetilde{\rho}+g(\rho_1,u_1)-g(\rho_2,u_2).
\end{align*}
In the case $d\geq 3$, one may carry out computations using standard product laws to obtain
\begin{align*}
\|\widetilde{\rho}\|_{\widetilde{L}^{\infty}_t(\dot{B}^{d-1}_{1,1})}&\lesssim \int_0^t \|\widetilde{u}\|_{\dot{B}^{d}_{1,1}}\,d\tau,\\
\|\widetilde{\rho}\|_{\widetilde{L}^{\infty}_t(\dot{B}^{d-3}_{1,1})}&\lesssim \int_0^t\|\dive ( u_1 \widetilde{\rho}+(1+\rho_1)\widetilde{u})\|_{\dot{B}^{d-3}_{1,1}}\,d\tau\lesssim \int_0^t (\|\widetilde{\rho}\|_{\dot{B}^{d-1}_{1,1}}+ \|\widetilde{u}\|_{\dot{B}^{d-2}_{1,1}})\,d\tau,
\end{align*}
and
\begin{align*}
\| \widetilde{u}\|_{\widetilde{L}^{\infty}_t(\dot{B}^{d-1}_{1,1})\cap L^1_t(\dot{B}^{d+1}_{1,1})}&\lesssim \int_0^t(1+\|u_1\|_{\dot{B}^{d+1}_{1,1}}) \|\widetilde{\rho}\|_{\dot{B}^{d-2}_{1,1}\cap \dot{B}^{d-1}_{1,1}}\,d\tau+\int_0^t \|\rho_2-1\|_{\dot{B}^{d}_{1,1}} \|\widetilde{u}\|_{\dot{B}^{d+1}_{1,1}}\,d\tau.
\end{align*}
Taking Gr\'onwall's lemma and the fact that $\|\rho_2-1\|_{L^{\infty}_t(\dot{B}^{d}_{1,1})}$ is suitably small, we conclude $(\delta \rho, \delta u)=0$ a.e. on $[0,T]\times \mathbb{R}^d$. The case $d=2$ can be treated by replacing the above $\dot{B}^{d-1}_{1,1}$-estimates with the weaker $\dot{B}^{d-1}_{1,\infty}$-estimates and applying a log-type inequality; see \cite{BCD}. The details are omitted.

\bigbreak

\bigbreak\noindent
\noindent \textbf{Acknowledgments.~} L.-Y. Shou is supported by National Natural Science Foundation of China (Grant No. 12301275).

\bigbreak

\noindent
\textbf{Conflict of interest.} The authors do not have any possible conflicts of interest.

\bigbreak

\noindent
\textbf{Data availability statement.}
 Data sharing is not applicable to this article, as no data sets were generated or analyzed during the current study.

\bigbreak
\bigbreak

\bigbreak

\noindent (L.-Y. Shou)\par\nopagebreak
\noindent\textsc{School of Mathematical Sciences, Ministry of Education Key Laboratory of NSLSCS, and Key Laboratory of Jiangsu Provincial Universities of FDMTA, Nanjing Normal University, Nanjing, 210023, P. R. China}

Email address: {\texttt{shoulingyun11@gmail.com}}

\bigbreak
\bigbreak

\noindent (Z. Song)\par\nopagebreak
\noindent\textsc{Mathematics and Key Laboratory of Mathematical MIIT, Nanjing University of Aeronautics and
Astronautics, Nanjing, 211106, P. R. China}

Email address: {\texttt{szh1995@nuaa.edu.cn; songzh19950504@gmail.com}}

\end{document}